\apptocmd{\sloppy}{\hbadness 10000\relax}{}{}
\newtheorem{theorem}{Theorem}
\newtheorem{corollary}{Corollary}
\newtheorem{lemma}{Lemma}
\newtheorem{proposition}{Proposition}
\theoremstyle{remark}
\newtheorem*{remark}{Remark}
\newtheorem{problem}{Problem}
\DeclareMathOperator{\rect}{Rect}
\newcommand{\R}{\ensuremath{\mathbb{R}}}
\newcommand{\Z}{\ensuremath{\mathbb{Z}}}
\newcommand{\N}{\mathbb{N}}
\DeclareMathOperator{\prob}{Prob}
\newcommand{\ud}{\mathrm{d}}
\newcommand{\dx}{\mathrm{d}x}
\newcommand{\abs}[1]{\left|#1\right|}
\newcommand{\Cnorm}[1]{\left\|#1\right\|}
\renewcommand{\Re}{\textup{Re }} % \textup prevents Re from being italicized in a thm
\renewcommand{\Im}{\textup{Im }}
\newcommand{\floor}[1]{\left\lfloor#1\right\rfloor}
\newcommand{\frc}[1]{\left\{#1\right\}}
\newcommand{\frcmod}[1]{\left\Vert#1\right\Vert}
\renewcommand{\mod}[1]{{\ifmmode\text{\rm\ (mod~$#1$)}\else\discretionary{}{}{\hbox{ }}\rm(mod~$#1$)\fi}}
\newcommand{\real}{\mathrm{Re}}
\newcommand{\imag}{\mathrm{Im}}
\newcommand{\chibar}{\overline \chi}
\newcommand{\bW}{\mathbb{W}}
\newcommand{\bX}{\mathbb{X}}
\newcommand{\bY}{\mathbb{Y}}
\newcommand{\bm}{\mathbf{m}}
\newcommand{\bn}{\mathbf{n}}
\newcommand{\bE}{\mathbb{E}}
\newcommand{\cP}{\mathcal{P}}
\newcommand{\cS}{\mathcal{S}}
\newcommand{\cB}{\mathcal{B}}
\DeclareMathOperator{\E}{\mathbb{E}}
\title[Mixed character sums and Littlewood polynomials]{Distribution of mixed character sums and extremal problems for Littlewood polynomials}
\author{Jonathan W. Bober}
\author{Oleksiy Klurman}
\author{Besfort Shala}
\address{Heilbronn Institute for Mathematical Research \\ School of Mathematics, University of Bristol, Bristol, United Kingdom}
\email{{\tt j.bober@bristol.ac.uk}}
\address{School of Mathematics, University of Bristol, Bristol, United Kingdom}
\email{{\tt oleksiy.klurman@bristol.ac.uk}}
\address{School of Mathematics, University of Bristol, Bristol, United Kingdom}
\email{{\tt besfort.shala@bristol.ac.uk}}
\begin{document}
\begin{abstract}
    We prove distributional results for mixed character sums \begin{equation*}
\sum_{n\le x }\chi(n)e(n\theta),
\end{equation*}
    for fixed $\theta\in [0,1]$ and random character $\chi \mod q$, as well as for
    a fixed character $\chi$ and randomly sampled $\theta\in  [0,1].$ We present
    various applications of our results. For example, we construct Littlewood
    polynomials with large Mahler measure and $L_1$ norm, thus establishing new records in the
    Mahler and Newman problems. We also show that $L_{2k}$ norms of well-known Turyn
    polynomials are asymptotically minimized at the shift $\alpha=1/4,$ proving a conjecture of
    G\"unther and Schmidt. An important ingredient in our work is a general way
    of dealing with ``log-integrability" problems.
\end{abstract}

\maketitle

\thispagestyle{empty}

\section{Introduction}

\newcommand{\Sabt}{S(\chi,\alpha,\beta,\theta)}
In this paper, we develop an approach aiming to understand the mixed
character sums
\begin{equation*}
S(\chi, x, \theta) = \sum_{n\le x }\chi(n)e(n\theta),
\end{equation*}
where $\chi$ is a (multiplicative) Dirichlet character mod $q$ and
$e(x) := e^{2 \pi i x}$ is the additive character on $\R.$ We are motivated by
the following natural questions.
\begin{problem}\label{Q1}
For various ranges of parameters $q,x\to\infty$ determine the distribution of
$S(\chi,x,\theta)$ as $\chi$ runs over a family of Dirichlet characters.
\end{problem}
\begin{problem}\label{Q2}
Let $\chi_q$ be a quadratic character. What is the limiting distribution of
$S(\chi_q,x,\theta)$ when $\theta\in[0,1]$ is chosen uniformly at random and
$q\to\infty$?
\end{problem}

In the case $\theta = 0$, Problem \ref{Q1} simply asks for the understanding
of the partial sums $\sum_{n < x} \chi(n)$, which is the most studied aspect
of this question. One of the notable results in this regard is that of Harper \cite{harper}, who showed that in the range when both $x$ and $q/x$ tend to infinity with $q$, the sums $S(\chi, x, 0)$ typically exhibit better than square-root cancellation. 

Following Montgomery and Vaughan \cite{MV-meanvalues},
Bober and Goldmakher \cite{BG-maximum} and later Bober, Goldmakher, Granville
and Koukoulopoulos \cite{BGGK} studied the distribution of $\max_{x} S(\chi,x, 0)$
as $\chi$ varies over the characters mod $q$. Hussain \cite{hussain-character-paths},
inspired by \cite{kowalski-sawin},
considered the
sum $S(\chi, tq,  0)$ as a function of $t$ and studied its distribution for randomly chosen $\chi$.
Subsequently the more difficult case of quadratic characters was dealt with by
Lamzouri \cite{lamzouri-quadraticmax} and Hussain and Lamzouri
\cite{lamzouri-hussain}, and more recently Lamzouri and Nath
\cite{nath-lamzouri} studied the distribution of the maximum of partial sums of
cubic characters.

In the case when $\theta\neq 0$, Wang and Xu \cite{wang-xu} showed that a smoothed version of $S(\chi, x, \theta)$ is typically of size $\sqrt x$ for all irrational $\theta$ satisfying a weak Diophantine condition, contrasting Harper's result.
Very recently, Dell and Milićević \cite{dell-milicevic} investigated the distribution
of the incomplete Gauss sums $t \rightarrow S(\chi_q, tq, 1/q)$,
where $\chi_q$ is the Jacobi symbol mod $q$ and $q$ is chosen randomly in a dyadic interval, which is
an example of Problem \ref{Q1} with rational $\theta \ne 0.$

In general, the limiting behavior significantly depends on the length $x=x(q)$ of the
character sum, and in this paper we confine ourselves to the fixed range
$\frac{x}{q}\to \beta\ne 0$ when $q\to\infty.$ To this end,  we consider for a
prime $q$ and a primitive (non-principal) Dirichlet character $\chi$ mod $q$ the shifted mixed
character sum
\[
    \Sabt = \sum_{\alpha q < n \leq (\alpha + \beta) q} \chi(n)e(n\theta).
\]
For some of our applications we will think of this as the polynomial
\[
    \sum_{\alpha q < n \leq (\alpha + \beta) q} \chi(n)x^n
\]
evaluated on the unit circle, thus naturally generalizing the well-known
variants of Fekete ($\alpha = 0$ and $\beta = 1$) and Turyn ($\beta = 1$)
polynomials.

We will take a different approach to these distributional questions, motivated
by the work of Klurman, Munsch and Lamzouri \cite{KLM-fekete} on Fekete
polynomials. For each integer $k\in [0,q-1]$ and Dirichlet character $\chi,$ we
define the function
\[
    F_{k,\chi,\alpha,\beta}(t) = \frac{e(-\alpha k)}{q^{1/2}}
        \sum_{\alpha q < n \leq (\alpha + \beta) q} \chi(n)e(n(k+t)/q),
\]
a normalised version of the sum $S(\chi, \alpha, \beta, \theta)$, with $\theta = (k+t)/q$ and $t\in [0,1).$ We regard
$F_{k,q,\alpha,\beta}:\widehat{\mathbb{F}}_q\rightarrow \mathscr C[0,1]$ as the (discrete) random
process obtained by choosing $\chi$ uniformly at random from the set of {\it
all} characters mod $q.$ We will consider the limiting behavior of
$F_{k,q,\alpha,\beta}$ as $q \rightarrow \infty$ over the primes, with $k$ either
fixed or tending to infinity as well.

For each integer $k$ we define the random process
\[
    F_{k, \alpha,\beta}(t) = \bX \frac{e(\alpha t)}{2\pi i}
    \sum_{l \in \Z}
    \frac{e(\alpha l) e(\beta(l + t)) - 1)}
    {l + t}
    \bW(k-l),
\]
where $\bW(l)$ is a completely multiplicative Steinhaus random variable
and $\bX$ is a random variable uniformly distributed on the unit circle,
independent of $\bW$. As we shall see, when $k$ tends to infinity the effect of multiplicativity
subsides, and so we also define the random process
\[
    F_{\alpha,\beta}(t) = \frac{e(\alpha t)}{2\pi i}
    \sum_{l \in \Z}
    \frac{e(\beta(l + t)) - 1}
    {l + t}
    \bX(l),
\] where the $\bX(l)$ are independent realizations of $\bX$. 
Both of these processes are almost-surely continuous. Our first main result is
the following convergence statement in the space of continuous functions $\mathscr{C}[0,1].$
\begin{theorem}\label{thm:random-process}
    Fix real numbers $\alpha$ and $\beta$. Then:
\begin{enumerate}
    \item
    For each integer $k$, the sequence of random processes 
    $F_{k,q,\alpha,\beta}$ converges in distribution to $F_{k, \alpha, \beta}$
    as $q\to\infty$ over the primes.
    \item
    For each sequence
    $k_q \rightarrow \infty$ with $k_q = q^{o(1)}$,
    the sequence of random processes 
    $F_{k_q,q,\alpha,\beta}$ converges in distribution to $F_{\alpha, \beta}$
    as $q\to\infty$ over the primes.
    \item
    For each sequence
    $k_q \rightarrow \infty$ with $k_q  = o(\pi(q))$,
    there is a full density subset $\cP$ of the primes
    such that the sequence of random processes 
    $F_{k_q,q,\alpha,\beta}$ converges in distribution to $F_{\alpha, \beta}$
    as $q \in \cP$ tends to infinity.
    \item
    For each irrational
    $\theta$, there is a full density subset $\cP_\theta$ of the primes
    such that the sequence of random processes $F_{\floor{\theta
    q},q,\alpha,\beta}$ converges in distribution to $F_{\alpha,\beta}$ as $q
    \in \cP_\theta$ tends to infinity.
\end{enumerate}
\end{theorem}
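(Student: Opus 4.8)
The plan is to establish all four parts through a unified two-step scheme: first prove convergence of finite-dimensional distributions, then prove tightness in $\mathscr{C}[0,1]$; together these give convergence in distribution in the space of continuous functions. The heart of the matter is understanding the joint distribution of the ``Fourier-type'' coefficients, which after expanding $e(n(k+t)/q)$ and collecting terms are essentially the values $\chi(n)$ weighted by kernels converging to $\frac{e(\alpha l)e(\beta(l+t)) - 1}{l+t}$. So the key inputs are: (i) a quantitative equidistribution / moment computation showing that for $\chi$ chosen uniformly among characters mod $q$, the vector $(\chi(n_1),\dots,\chi(n_r))$ behaves like a Steinhaus vector subject to the multiplicative relations among the $n_i$, and (ii) control of the tails of the sum over $l\in\Z$ so that truncating at $|l|\le L$ introduces an error that is uniformly small in $t$.

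For part (1), with $k$ fixed, I would compute mixed moments $\E\bigl[\prod \chi(n_i)\overline{\chi(m_j)}\bigr]$ over characters mod $q$; by orthogonality this is $1$ when $\prod n_i \equiv \prod m_j \pmod q$ and $0$ otherwise, and for $q$ large relative to the $n_i, m_j$ (which are of size $O(q)$ but whose relevant products are constrained) this matches the Steinhaus moments $\E\bigl[\prod \bW(a_i)\overline{\bW(b_j)}\bigr]$, which are $1$ iff the multisets $\{a_i\}$ and $\{b_j\}$ coincide. The subtlety is that $n$ ranges up to $(\alpha+\beta)q \asymp q$, so products of two such $n$ exceed $q$ and genuine congruence collisions must be shown to be negligible — this is a standard but careful counting argument (the number of solutions to $n_1 n_2 \equiv m_1 m_2 \pmod q$ in a box is controlled). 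Combined with the explicit shape of $F_{k,\alpha,\beta}$, whose $\bW(k-l)$ structure exactly encodes these multiplicative constraints relative to the fixed base point $k$, this yields finite-dimensional convergence; the independent uniform $\bX$ appears as the limit of $e(-\alpha k)\chi(\text{something})$ / from summing over the residue, reflecting the overall phase ambiguity.

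For parts (2), (3), (4), the mechanism is that as $k_q\to\infty$ the relevant shifts $k_q - l$ for $|l|\le L$ are all large and ``generic'', so the multiplicative collisions $\chi(k_q - l_1)\chi(k_q-l_2) = \chi(k_q-m_1)\chi(k_q-m_2)$ etc.\ no longer force equality of the shift-sets — i.e.\ $\bW$ near $k_q$ decorrelates into independent $\bX(l)$'s. Quantitatively this is the assertion that $(k_q-l)_{|l|\le L}$ are multiplicatively independent mod $q$ for most $q$; the strength of hypothesis needed dictates the three variants: $k_q = q^{o(1)}$ is small enough that this holds for \emph{all} large primes (the relevant bad congruences force $q$ to divide a fixed small integer), whereas $k_q = o(\pi(q))$ or $k_q = \lfloor\theta q\rfloor$ only give it on a density-one set $\cP$ (resp.\ $\cP_\theta$) via a sieve/Borel–Cantelli argument over $q$ controlling how often a given multiplicative relation among shifts can occur. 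For part (4) specifically one additionally uses that $\lfloor\theta q\rfloor/q \to \theta$ irrational to rule out the periodic obstructions, choosing $\cP_\theta$ to avoid the sparse set of $q$ on which $\lfloor\theta q\rfloor$ has an unwanted multiplicative coincidence with a small integer.

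Tightness is the common technical obstacle for all parts. The approach is the standard Kolmogorov-type criterion: bound $\E\bigl|F_{k,q,\alpha,\beta}(t) - F_{k,q,\alpha,\beta}(s)\bigr|^{2m}$ by $C_m |t-s|^{1+\delta}$ uniformly in $q$, using that the summand kernel $\frac{e((\alpha+\beta)(l+t)/1)\cdots}{l+t}$ — more precisely the finite sum $\frac{1}{q^{1/2}}\sum_{\alpha q<n<(\alpha+\beta)q}\chi(n)e(nt/q)$ after removing the $k$-dependent phase — is Hölder in $t$ with constants controlled via a second-moment (or $2m$-th moment) estimate of the character sum; Abel summation converts the $e(nt/q)$ factor into partial sums of $\chi$, which have the requisite mean-square bound $O(q)$ by orthogonality. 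The $l$-tail estimate feeds in here too: one shows $\sum_{|l|>L}\frac{1}{|l+t|}\bigl|\text{coefficient}\bigr|$ has small $2m$-th moment uniformly, so the process is well-approximated by its truncation and the truncation is manifestly tight. I expect the delicate point to be getting these moment bounds \emph{uniform in $q$ and in the shift $k_q$} in parts (2)–(4), since there the cancellation in the character sum is what must be leveraged, and combining it cleanly with the decorrelation argument — but no new ideas beyond careful bookkeeping should be required, as the $\theta=0$ case is treated in \cite{KLM-fekete} and the present setting differs only by the harmless unimodular factor $e(n\theta)$.
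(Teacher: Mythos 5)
There is a genuine gap, and it is at the heart of the argument: the finite-dimensional moment computation. You propose to compute mixed moments of $(\chi(n_1),\dots,\chi(n_r))$ with $n$ running over the \emph{original} range $\alpha q<n<(\alpha+\beta)q$, and to dismiss the off-diagonal congruence solutions $n_1\cdots n_r\equiv m_1\cdots m_r \pmod q$ by a ``standard counting argument.'' For variables of size $\asymp q$ this fails: the congruence has roughly a factor $q$ more solutions than the exact multiset equality, and their total contribution to, say, the fourth moment is not negligible by counting alone --- the additive phases $e(n\theta)$ must supply the cancellation, and extracting that cancellation is essentially equivalent to the step you have skipped. The paper's route is to first apply a quantitative twisted Poisson summation formula (Proposition~\ref{prop:poisson}) so that the process is represented by a series in $l$ with coefficients $\chibar(k-l)$, then truncate to $|l|<L(q)$ with $L(q)=q^{o(1)}$ (Proposition~\ref{proposition:truncation}); only on this dual side, where all frequencies are tiny compared to $q$, do the congruence conditions $\prod(k+m_i)\equiv\prod(k+n_j)\pmod q$ rigidify into exact equations (fixed $k$, Proposition~\ref{prop:chi-moments}) or into permutation conditions after removing bad primes (Proposition~\ref{prop:chi-moments2}). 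Your sketch gestures at the dual kernel and at the tail over $l$, but never commits to how that representation is obtained or why the moment combinatorics may be carried out there, and the $n$-side computation you do describe would not close. A second, related omission is the Gauss sum: after Poisson summation every term carries a factor $\tau(\chi)/q^{1/2}$, and producing the independent uniform $\bX$ in the limit (equivalently, killing the unbalanced moments $r\ne s$) requires a nontrivial bound on $\frac{1}{q-1}\sum_{\chi}\chi(a)\tau(\chi)^n$, i.e.\ Deligne's estimate for hyper-Kloosterman sums (Lemma~\ref{lemma:gauss-sum}); ``overall phase ambiguity'' is not a substitute for this input. You also do not address why the limiting moments determine the distribution (the Carleman condition), which is needed to pass from moment convergence to convergence of finite-dimensional laws.

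On the positive side, your mechanism for parts (2)--(4) --- that multiplicative coincidences among the shifts $k_q-l$, $|l|\le L(q)$, must be shown to occur for at most a density-zero set of primes, with the hypothesis on $k_q$ dictating whether any primes need removing at all --- is the right idea and matches the paper's Lemmas~\ref{lemma:fpalpha} and~\ref{lemma:goodprimes} in spirit, though for $k_q=\floor{\theta q}$ the actual argument is a Diophantine one (rational approximation $a/q$ to $\theta$, reducing the polynomial congruence to a fixed nonzero integer and counting its prime divisors), not merely ``avoiding periodic obstructions.'' Note also that these coincidences only make sense once the dual-side truncation is in place, so this part of your plan silently presupposes the step identified above. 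Tightness is genuinely the easy part here: a second-moment bound $\E|\,\cdot(s)-\cdot(t)|^2\ll|s-t|^2$ for the truncated process suffices via Prokhorov's criterion, and your heavier $2m$-th moment/Abel summation scheme, while workable, is unnecessary.
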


We discuss the size restrictions on $k_q$ and the necessity of removing a zero
density subset of primes in Sections \ref{roadmap} and \ref{primeRemSect}.

Applied to Problem \ref{Q1}, we obtain the following corollary, addressing
a question of Wang and Xu \cite{wang-xu}. 

\begin{corollary}
For each irrational $\theta$ and real number $\beta$ there is a full density
subset $\cP_\theta$ of the primes such that for every $t_0 \in [0,1]$, when
$\frc{q \theta} \rightarrow t_0$ for a sequence of $q \in \cP_\theta$, the
sequence
\[
    \frac{1}{q^{1/2}}\sum_{n \leq \beta q} \chi(n) e(n \theta)
\xrightarrow{\phantom{dd}d\phantom{dd}} F_{0,\beta}(t_0)
\]
as $\chi$ varies over the characters mod $q$.
\end{corollary}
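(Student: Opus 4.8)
The plan is to deduce the corollary directly from part~(4) of Theorem~\ref{thm:random-process} by recognizing the sum in question as a single evaluation of the random process $F_{k,q,\alpha,\beta}$ with $\alpha = 0$. First, for a fixed irrational $\theta$ I would set $k_q = \floor{\theta q}$ and $t_q = \frc{\theta q}$, so that $(k_q + t_q)/q = \theta$ identically, and read off from the definition of $F_{k,\chi,\alpha,\beta}$ (with $\alpha=0$) the exact identity
\[
F_{k_q,\chi,0,\beta}(t_q) = \frac{1}{q^{1/2}}\sum_{0 < n < \beta q}\chi(n)e(n\theta).
\]
Thus the object of interest is precisely the process $F_{\floor{\theta q},q,0,\beta}$ sampled at the deterministic point $t_q$, and $F_{0,\beta}(t_0)$ is exactly the limiting process from Theorem~\ref{thm:random-process} evaluated at $t_0$.

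Next I would invoke Theorem~\ref{thm:random-process}(4) with $\alpha = 0$: it supplies a full density subset $\cP_\theta$ of the primes along which $F_{\floor{\theta q},q,0,\beta} \to F_{0,\beta}$ in distribution in $\mathscr{C}[0,1]$. The remaining task is to convert this functional convergence into convergence of the value at $t_q$, using the hypothesis $t_q = \frc{\theta q}\to t_0$. Since a deterministic sequence $t_q \to t_0$ converges in distribution trivially, one upgrades to joint convergence $(F_{\floor{\theta q},q,0,\beta}, t_q) \to (F_{0,\beta}, t_0)$ in $\mathscr{C}[0,1]\times[0,1]$; as the evaluation map $(f,s)\mapsto f(s)$ is continuous on that product, the continuous mapping theorem gives $F_{\floor{\theta q},q,0,\beta}(t_q) \xrightarrow{d} F_{0,\beta}(t_0)$, which is the claim. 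Equivalently, one can write $F_{\floor{\theta q},q,0,\beta}(t_q) = \big(F_{\floor{\theta q},q,0,\beta}(t_q) - F_{\floor{\theta q},q,0,\beta}(t_0)\big) + F_{\floor{\theta q},q,0,\beta}(t_0)$ and check that the bracketed term tends to $0$ in probability: convergence in $\mathscr{C}[0,1]$ forces tightness, hence asymptotic equicontinuity of the family, and $|t_q-t_0|\to 0$, so the difference is uniformly small with high probability; Slutsky's theorem then finishes the argument.

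The only point requiring care — and the only place the argument is not completely formal — is this last step, since the evaluation point $t_q$ moves with $q$ and so one cannot just apply the continuous mapping theorem to evaluation at a fixed point. But the tightness built into weak convergence in $\mathscr{C}[0,1]$ handles it, and I do not anticipate any genuine obstacle: the entire substance of the corollary is carried by Theorem~\ref{thm:random-process}(4), with the present deduction being purely soft functional-analytic bookkeeping.
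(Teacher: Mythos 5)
Your proposal is correct and matches the paper's (implicit) argument: the corollary is stated there as a direct consequence of Theorem~\ref{thm:random-process}(4), via exactly your identification $F_{\floor{\theta q},\chi,0,\beta}(\frc{\theta q}) = q^{-1/2}\sum_{n<\beta q}\chi(n)e(n\theta)$. Your handling of the moving evaluation point (joint convergence with the deterministic sequence $t_q\to t_0$ and the jointly continuous evaluation map, or equivalently tightness in $\mathscr{C}[0,1]$ plus Slutsky) is the standard bookkeeping the paper leaves unstated, and it is sound.
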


We now turn to Problem \ref{Q2} and fix $\chi_q$ as the quadratic character mod $q$.
We partition the interval $[0,1]$ into pieces $[\frac{k}{q},\frac{k+1}{q})$ for $k\in [0,q-1]$ and our aim is to show that the behavior of the character sum $S(\chi,\alpha,\beta,\theta)$ in a randomly chosen subinterval is governed by an explicit random process.

For each prime $q$, let
$G_{q,\alpha,\beta}:\{0,1, \ldots, q-1\} \rightarrow \mathscr C[0,1]$ be the random process
$k \rightarrow \frac{q^{1/2}}{\tau(\chi_q)}F_{k, \chi_q, \alpha, \beta}(t)$,
where the set $\{0,1, \ldots, q-1\}$ is equipped with the uniform measure.
Finally, we introduce the random process
\[
    G_{\alpha,\beta}(t) = \frac{e(\alpha t)}{2 \pi i}
    \sum_{l \in \Z}
    \frac{e(\alpha l) (e(\beta(l + t)) - 1)}
    {l + t}
    \bY(l),
\]
where $\bY(l)$ are independent random variables uniformly distributed in $\{-1, 1\}$.

\begin{theorem}\label{thm:random-quadratic-process}
    For each fixed $\alpha$ and $\beta$, the random process
    $G_{q,\alpha,\beta}$ converges in distribution to
    $G_{\alpha,\beta}$ as $q\to\infty$ along the primes.
\end{theorem}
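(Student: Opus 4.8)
The plan is to mirror the proof of Theorem \ref{thm:random-process}(1) but with the roles of the two sources of randomness interchanged. In Theorem \ref{thm:random-process} the character $\chi$ was random and $k$ fixed; here $\chi = \chi_q$ is fixed (the quadratic character) and $k \in \{0,\dots,q-1\}$ is chosen uniformly. The first step is to reduce the convergence in $\mathscr C[0,1]$ to (a) convergence of finite-dimensional distributions, and (b) tightness. For tightness, I would reuse whatever moment/modulus-of-continuity estimate was established for the $F_{k,q,\alpha,\beta}$ in the proof of Theorem \ref{thm:random-process}; since $G_{q,\alpha,\beta}(k) = \frac{q^{1/2}}{\tau(\chi_q)} F_{k,\chi_q,\alpha,\beta}$ and $|\tau(\chi_q)| = q^{1/2}$, the normalization is harmless and the same Kolmogorov-type bound applies uniformly in $k$, giving tightness of the family $\{G_{q,\alpha,\beta}\}$.

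The heart of the matter is the finite-dimensional distributions. Fix $t_1,\dots,t_m \in [0,1]$; I want to understand the joint distribution of $\bigl(G_{q,\alpha,\beta}(k)(t_j)\bigr)_{j\le m}$ as $k$ is uniform mod $q$. Opening the definition and using the Gauss-sum evaluation $\chi_q(n) = \frac{1}{\tau(\chi_q)}\sum_{a \bmod q}\chi_q(a) e(an/q)$ (valid for primitive $\chi_q$), one rewrites $G_{q,\alpha,\beta}(k)(t)$ so that the dependence on $k$ is isolated in phases $e(ak/q)$ for $a$ ranging over residues mod $q$. The key probabilistic input is then: as $k$ runs uniformly over $\Z/q\Z$, the vector $\bigl(\chi_q(k - l)\bigr)_{|l| \le L}$ — i.e. the relevant short window of consecutive values of the Legendre symbol — converges in distribution, as $q \to \infty$, to $\bigl(\bY(l)\bigr)_{|l|\le L}$, a vector of i.i.d. $\pm 1$ Bernoulli signs. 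This is a standard consequence of Weil's bound: for any fixed pattern $\varepsilon \in \{\pm1\}^{2L+1}$, the number of $k$ with $\chi_q(k-l) = \varepsilon_l$ for all $l$ is $q/2^{2L+1} + O_L(\sqrt q)$, by expanding the indicator $\prod_l \frac{1+\varepsilon_l \chi_q(k-l)}{2}$ and bounding the resulting incomplete character sums $\sum_k \chi_q\bigl(\prod_{l\in S}(k-l)\bigr)$ for nonempty $S$. (One must also handle the finitely many $k$ with $k \equiv l \bmod q$ for some $l$ in the window; these contribute negligibly.) Feeding this into the truncated sum and controlling the tail $\sum_{|l| > L} \frac{1}{|l+t|}(\cdots)$ uniformly — again reusing the tail estimates already proved for Theorem \ref{thm:random-process} — yields convergence of the finite-dimensional distributions of $G_{q,\alpha,\beta}$ to those of $G_{\alpha,\beta}$.

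The main obstacle I anticipate is not the Weil-bound equidistribution of Legendre-symbol patterns (which is classical) but rather interchanging the truncation limit $L \to \infty$ with the limit $q \to \infty$ while retaining control in the sup norm over $t \in [0,1]$: one needs the tail of the defining series for $G_{q,\alpha,\beta}$ to be small uniformly in $q$ and in $k$, not just for a single $q$ or in an $L^2$-averaged sense. Concretely, the difficulty is that $\sum_{|l|>L}\frac{e(\beta(l+t))-1}{l+t}\bY(l)$-type tails are only conditionally convergent, so establishing the requisite uniform smallness (almost surely, or in probability after removing a small exceptional event) is where the real work lies. I expect this to be handled exactly as in the proof of Theorem \ref{thm:random-process} — likely via a second-moment computation combined with an Abel-summation / van der Corput argument on the oscillatory factor $e(\beta(l+t))$ — so that once Theorem \ref{thm:random-process} is in hand, Theorem \ref{thm:random-quadratic-process} follows by substituting the Legendre-symbol equidistribution for the orthogonality of Dirichlet characters and otherwise repeating the same tightness-plus-finite-dimensional-distributions scheme.
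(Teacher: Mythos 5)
Your outline is correct and structurally parallels the paper's proof: truncate the Poisson-summation series, prove tightness via a Kolmogorov-type second-moment bound (the paper's Lemma \ref{lemma:tightness} already covers $\tilde G_{q,\alpha,\beta}$, and the Gauss-sum normalization is indeed harmless), and extract finite-dimensional convergence from the Weil bound for $\sum_{k}\chi_q\bigl((k+m_1)\cdots(k+m_d)\bigr)$. The one genuine divergence is how the finite-dimensional distributions are treated: the paper (Proposition \ref{prop:quadratic-moments}) computes all mixed moments of the truncated process with truncation length $L(q)\to\infty$, uses Weil to replace the $k$-average by $\E\bigl[\prod\bY(m_{j,\ell})\bigr]+O((r+s)q^{-1/2})$, and then must verify absolute convergence of the limiting moment series and the Carleman condition to know the moments determine $G_{\alpha,\beta}$; you instead prove convergence in distribution of the sign-pattern vector $\bigl(\chi_q(k-l)\bigr)_{|l|\le L}$ to i.i.d.\ signs (the same Weil input, packaged as equidistribution of Legendre patterns) and push it through a fixed-$L$ truncation, which avoids moment determinacy entirely but obliges you to interchange $L\to\infty$ with $q\to\infty$. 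On that interchange, your worry is stronger than what is needed: since $k$ is the random variable, you do not need the tail small uniformly in $k$ or almost surely --- a second moment of the sup-norm tail averaged over $k$, which here is elementary because $\sum_{k\bmod q}\chi_q(k-n)\chi_q(k-m)=-1$ for $n\not\equiv m$ (no Weil, Abel summation or van der Corput required), combined with Markov's inequality and bounded Lipschitz test functions is exactly how the paper's Proposition \ref{proposition:truncation} disposes of the tail, and the corresponding tail of the limit process $G_{\alpha,\beta}$ is controlled by the same second moment for the independent $\bY(l)$. With that substitution (and the same computation run with fixed $L$ instead of $L(q)$), your scheme closes into a complete proof; what the paper's moment-matching buys in exchange for the Carleman verification is that the truncation level, the Weil step and the determinacy bound are handled in one uniform computation, which is then reused verbatim for the varying-character cases of Theorem \ref{thm:random-process}.
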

 We apply these results in conjunction with some new ideas to make progress on
 old questions related to the extremal properties of Littlewood polynomials.
\subsection{Application to the Mahler problem}
 Our first application  concerns a classical quantity of a
 polynomial $P\in\mathbb{C}[x].$ Recall the $L_{\lambda}$ norm of $P$,
 given, for $\lambda > 0$, by
 \[
     \|P\|_{\lambda} = \left(\int_0^1 |P(e(t))|^{\lambda} \ud t\right)^{1/\lambda}
 \]
 and the Mahler measure of $P$, defined as
\begin{equation}\label{def_mah}
M_0(P)=\exp\left(\int_{0}^1\log |P(e(t))|\ud t\right).
\end{equation}
In 1963, Mahler \cite{mahler} (also mentioned as Problem 10 by Borwein in his
book \cite{B-book}) posed the following problem.
\begin{problem}[Mahler Problem]\label{mah_question}
What is
\[b_m=\limsup_{n\to\infty} \frac{M_0(P_n)}{\sqrt{n}},\]
where $P_n(x)=\sum_{k=0}^na_kx^k$ with $a_k=\pm 1$ for all $k$?
\end{problem}
Problem \ref{mah_question} is a younger cousin of the infamous Lehmer problem \cite{Lehmer}, asking for integer polynomials with a small Mahler measure.
It easy to see that $\frac{M_0(P)}{\|P\|_2}<1,$ but even the question of whether $b_m=1$ is wide
open. The conclusion that $b_m=1$ would immediately follow from the
existence of ``ultra-flat" polynomials (Littlewood's conjecture) with coefficients $\pm 1.$ Indeed, if for every $\varepsilon>0$ there is a Littlewood polynomial
$P_n$ satisfying the pointwise bound
\[(1-\varepsilon)\sqrt{n}\le |P_n(z)|\le (1+\varepsilon)\sqrt{n}\]
for $|z|=1,$ then plugging this estimate into \eqref{def_mah} and letting
$\varepsilon\to 0$  yields $b_m=1.$ In particular, this implies that if one
relaxes the condition allowing complex unimodular coefficients $|a_k|=1,$ then
we can achieve the upper bound $1.$ This follows from the existence of complex
ultra-flat polynomials which were famously constructed by Kahane \cite{kahane} using a
probabilistic approach (see also related work of Bombieri and Bourgain
\cite{BB} for a deterministic construction). However, Erd\H{o}s conjectured (Problem 5 in \cite{B-book}) that Littlewood
ultra-flat polynomials do not exist. 

Interestingly, in $1970$, Fielding \cite{Field} showed directly (without
appealing to ultra-flatness) that
\[
    \limsup_{n\to\infty} \frac{M_0(P_n)}{\sqrt{n}}=1,
\]
when the supremum is taken over all complex unimodular polynomials of degree $n.$ Beller and Newman \cite{beller-newman} quantitatively refined this by
showing that the expression inside the supremum is $1-O(\frac{\log n}{\sqrt{n}}).$

The Mahler problem, which may be regarded as a ``flatness" question ---
albeit in a somewhat weaker form --- has attracted considerable attention over
the last few decades. Choi and Erd\'{e}lyi \cite{CE} proved that for each $n$,
there exists a Littlewood polynomial $f_n$ satisfying $M_0(f_n)/\sqrt{n}\ge
1/2+o(1)$ and consequently $b_m\ge 1/2.$ In \cite{CE1} they established a
stronger result, determining the expected value of the normalized Mahler
measure for random $\pm 1$  Littlewood polynomials, namely
\[
\lim_{n\to\infty}\frac{\mathbb{E}M_0(f_n))}{\sqrt{n}}=e^{-\gamma/2} = 0.749\ldots,
\]
where $\gamma = 0.57721...$ denotes Euler’s constant. Building on the work of Rodgers \cite{Rod}, Erd\'{e}lyi \cite{Rudin-Shapiro} showed
that for the Rudin–Shapiro polynomials $P_k(x)$ and $Q_k(x)$, the value of the
normalized Mahler measure approaches 
$0.8576\dots$ when $k\to\infty.$
This provided a record value in the Mahler problem.

Very recently, a new approach to the Mahler question was introduced, relying on
the solution in \cite{KLM-fekete} of an old problem determining the Mahler measure
of Fekete polynomials $F_q(z).$ Previously, using subharmonic methods, Erd\'elyi and
Lubinsky~\cite{LE} proved the lower bound $M_0(F_q)\ge
(\frac{1}{2}-\varepsilon)\sqrt{q},$ which was improved in~\cite{Lower-Mah}
to $M_0(F_q)\ge (\frac{1}{2}+c_1)\sqrt{q},$ for some small value of $c_1>0.$ In
\cite{KLM-fekete}, the authors showed that $M_0(F_q)\sim c\sqrt{q},$ where
$c=0.748\dots$, by introducing new probabilistic ideas.
Mossinghoff \cite{mossinghoff-mahler} used these techniques to compute the Mahler measure of Turyn
polynomials (corresponding to the shift $\alpha=\frac{1}{4} $ and $\beta=1$)
to significantly improve the record value to $b_m\ge 0.951\dots.$ We note that
these works crucially relied on the analysis of {\it complete exponential
sums} in both the probabilistic and ``log-integrability" parts of the arguments
(see Section \ref{roadmap} for a more detailed discussion).  

We instead apply our distributional results for incomplete exponential sums
(Theorem \ref{thm:random-quadratic-process}) in conjunction with a new
general method of dealing with the log-integrability problem to show the
following result, improving the record in the Mahler problem.
\begin{corollary}\label{mahler-quant}
    There exist Littlewood polynomials of arbitrarily large degree
    with normalized Mahler measure $> 0.954$. Consequently, $b_m>0.954$.
\end{corollary}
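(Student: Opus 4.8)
The plan is to realize the extremal polynomials as shifted, \emph{incomplete} Legendre‑symbol polynomials, to evaluate the limit of their normalized Mahler measure through Theorem~\ref{thm:random-quadratic-process}, and then to optimize the two free parameters. Fix $\alpha\in[0,1)$ and $\beta>0$ (to be chosen last). For a large prime $q$, let $L_q$ be the Littlewood polynomial of degree $d_q=\floor{\beta q}$ whose $n$-th coefficient is $\chi_q(n+\floor{\alpha q})$ whenever this is nonzero, and is $+1$ at the $O_\beta(1)$ indices $n$ with $q\mid n+\floor{\alpha q}$. Since $M_0$ is invariant under multiplication by monomials and moves by $o(1)$ under an $O_\beta(1)$ additive perturbation of a polynomial with $\|\cdot\|_2^2\asymp q$ (this last point already needs a crude lower bound for $\int_0^1\log|\cdot|$, to be supplied below), one has
\[
\log M_0(L_q)=\int_0^1\log\abs{S(\chi_q,\alpha,\beta,\theta)}\,\ud\theta+o(1).
\]

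Next I reduce this to the random process. Partition $[0,1]$ into arcs $\theta=(k+t)/q$ with $k\in\{0,\dots,q-1\}$ and $t\in[0,1)$. The definition of $F_{k,\chi_q,\alpha,\beta}$ together with $\abs{e(-\alpha k)}=1$ gives $\abs{S(\chi_q,\alpha,\beta,\theta)}=q^{1/2}\abs{F_{k,\chi_q,\alpha,\beta}(t)}$, and since $\abs{\tau(\chi_q)}=q^{1/2}$ for prime $q$ this equals $q^{1/2}\abs{G_{q,\alpha,\beta}(k)(t)}$. Hence
\[
\log M_0(L_q)=\tfrac12\log q+\E_k\!\int_0^1\log\abs{G_{q,\alpha,\beta}(k)(t)}\,\ud t+o(1),
\]
so that $M_0(L_q)/\sqrt{d_q}=\beta^{-1/2}\exp\!\bigl(\E_k\!\int_0^1\log\abs{G_{q,\alpha,\beta}}\,\ud t\bigr)(1+o(1))$, where $\E_k$ is the average over $k\in\{0,\dots,q-1\}$.

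Now comes the heart of the matter: passing to the limit. By Theorem~\ref{thm:random-quadratic-process}, $G_{q,\alpha,\beta}\to G_{\alpha,\beta}$ in distribution in $\mathscr C[0,1]$, and $g\mapsto\int_0^1\log\abs{g(t)}\,\ud t$ is continuous on $\mathscr C[0,1]$ away from functions possessing a zero (an event of probability $0$ for $G_{\alpha,\beta}$, which is a.s.\ non‑vanishing). The crux is to upgrade this to convergence of the expectations
\[
\E_k\!\int_0^1\log\abs{G_{q,\alpha,\beta}}\,\ud t\ \longrightarrow\ \int_0^1\E\log\abs{G_{\alpha,\beta}(t)}\,\ud t,
\]
i.e.\ to rule out a surviving contribution in the limit from arcs on which $\abs{S(\chi_q,\alpha,\beta,\theta)}$ is extremely small. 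This is exactly the log‑integrability problem, and here I would invoke the paper's general method, which I expect to furnish a small‑value bound of the shape $\prob_{k,t}\!\bigl(\abs{G_{q,\alpha,\beta}(k)(t)}\le e^{-\lambda}\bigr)\ll e^{-c\lambda}$, uniform in $q$ and also valid for the limit process (equivalently a uniform bound $\E_k\int_0^1\abs{\log\abs{G_{q,\alpha,\beta}}}^{1+\varepsilon}\ud t\ll1$), which yields the required uniform integrability. Granting this,
\[
\lim_{q\to\infty,\ q\ \mathrm{prime}}\frac{M_0(L_q)}{\sqrt{d_q}}=V(\alpha,\beta):=\frac{1}{\sqrt\beta}\exp\!\Bigl(\int_0^1\E\log\abs{G_{\alpha,\beta}(t)}\,\ud t\Bigr).
\]

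It remains to choose $(\alpha,\beta)$ with $V(\alpha,\beta)>.954$. For fixed $t$, $G_{\alpha,\beta}(t)$ equals, up to the factor $e(\alpha t)/(2\pi i)$ of modulus $1/2\pi$, a random signed sum $\sum_{l\in\Z}c_l(\alpha,\beta,t)\bY(l)$ with $\abs{c_l}\ll_\beta\abs{l}^{-1}$, so $\E\log\abs{G_{\alpha,\beta}(t)}$ is obtained by truncating the series, bounding the tail, and evaluating (or rigorously Monte‑Carlo estimating, with error bars) the resulting finite expectation. Integrating over $t$ and optimizing numerically over $(\alpha,\beta)$ --- which should reproduce Mossinghoff's value $0.951\ldots$ at the Turyn point $\alpha=1/4,\ \beta=1$ and improve on it once the sum is genuinely incomplete ($\beta\ne1$) --- should exhibit a pair with $V(\alpha,\beta)>.954$, with a certificate produced by interval arithmetic. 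Finally, for all sufficiently large primes $q$ the polynomials $L_q$ have $d_q\to\infty$ and $M_0(L_q)/\sqrt{d_q}>.954$, whence $b_m\ge\limsup_q M_0(L_q)/\sqrt{d_q}=V(\alpha,\beta)>.954$. \emph{The step I expect to be the main obstacle is the log‑integrability bound used to justify the limit interchange.}
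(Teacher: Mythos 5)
Your proposal follows the same architecture as the paper (shifted incomplete Legendre polynomials with the $O_\beta(1)$ vanishing coefficients replaced by $\pm 1$, reduction to the process $G_{q,\alpha,\beta}$ via Theorem \ref{thm:random-quadratic-process}, a limit interchange, then a numerical choice of $(\alpha,\beta)$ — the paper takes $\alpha=.2$, $\beta=1.1$ and works with $H^{\pm}_{k,q}(t)=F_{k,\chi_q,.2,1.1}(t)\pm e(-.2k+t)/q^{1/2}$). But the step you yourself flag as the main obstacle — the log-integrability bound justifying the interchange — is exactly the heart of the matter, and you do not supply it; you only posit that "the paper's general method" should yield a small-value probability estimate of the shape $\operatorname{Prob}\bigl(\abs{G_{q,\alpha,\beta}(k)(t)}\le e^{-\lambda}\bigr)\ll e^{-c\lambda}$ uniformly in $q$. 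No such probabilistic tail bound is proved in the paper, and it is not what the method produces. What actually closes the gap is a \emph{deterministic} estimate, valid for every $k$ and every admissible sign pattern: $\int_0^1\log\abs{f(t)}\,\mathds{1}_{\abs{f(t)}<\varepsilon}\,\ud t\ll\varepsilon^{1/6}$ (Lemmas \ref{lemmaLogSmall} and \ref{lemmaLogSmall2}), obtained by applying Proposition \ref{logThm} to the combination $4\pi^2\beta^2A'(t)+A'''(t)$ — chosen precisely because this series converges absolutely, unlike $A$ and its individual derivatives — together with a finite numerical verification that $[0,1]$ can be covered by a bounded number of fixed intervals on each of which either the real or the imaginary part of that combination exceeds $1$ for all possible values of $\chi_q(k-l)$ with $\abs{l}$ small (with longer truncations needed in the special cases $k\in\{0,q-1\}$). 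Without this input (or an equivalent uniform-integrability statement, which you state but do not prove), the passage from convergence in distribution to convergence of $\E_k\int_0^1\log\abs{G_{q,\alpha,\beta}}$ is unjustified; note also that distributional convergence only transfers directly to \emph{bounded} continuous functionals, which is why the paper works with the smoothed cutoff $\ell_\varepsilon$ rather than with "continuity away from zeros".

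The same gap infects your preliminary reduction: the claim that replacing the $O_\beta(1)$ zero coefficients by $+1$ changes $\log M_0$ only by $o(1)$ is not automatic, because the perturbation, though pointwise $O(1)$ against a function of typical size $\sqrt q$, can be dominant near small values of the polynomial, which is where $\log$ is sensitive. The paper does not argue by perturbation at all: it builds the extra monomial into $H^{\pm}_{k,q}$ and reruns the differential-inequality verification for the perturbed function (Lemma \ref{lemmaLogSmall2}), using that the derivatives of $e(-.2k+t)/q^{1/2}$ are $O(1/\sqrt q)$ so the same inequalities persist. Finally, the concluding numerical certificate (that the limiting constant exceeds $.954$ for some explicit $(\alpha,\beta)$) is likewise left as a promise in your write-up; the paper at least fixes the parameters and reports the computed value. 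In short, the skeleton is right, but the two load-bearing components — the uniform small-value estimate via the differential-operator argument with its finite case check, and the explicit numerical evaluation — are missing, so the proposal does not yet constitute a proof.
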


The last result follows, after some straightforward adjustments, from the following statement. 
\begin{theorem}\label{thm5} We have
    \[
        \frac{1}{(1.1q)^{1/2}}\exp\left(\int_0^1 \log \abs{S(\chi_q, 0.2, 1.1, \theta)}\ud\theta\right)
             \longrightarrow c\approx 0.954\ldots
    \]
as $q \rightarrow \infty$ over the primes.
\end{theorem}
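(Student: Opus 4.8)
\medskip
\noindent\textbf{Proof proposal for Theorem~\ref{thm5}.}
The plan is to recognize the left‑hand side as the expectation of the functional $\Phi(f):=\int_0^1\log\abs{f(t)}\,\ud t$ evaluated on the random process $G_{q,\alpha,\beta}$ of Theorem~\ref{thm:random-quadratic-process} (with $(\alpha,\beta)=(.2,1.1)$), and then to pass to the limit. First I would unwind the normalisation: partitioning $[0,1)$ into the arcs $[k/q,(k+1)/q)$ and writing $\theta=(k+t)/q$, the identity $\abs{\tau(\chi_q)}=q^{1/2}$ together with the definitions of $F_{k,\chi_q,\alpha,\beta}$ and $G_{q,\alpha,\beta}$ gives $q^{-1/2}\abs{S(\chi_q,\alpha,\beta,(k+t)/q)}=\abs{G_{q,\alpha,\beta}(k)(t)}$, hence
\[
    \int_0^1 \log\abs{\frac{1}{(\beta q)^{1/2}}S(\chi_q,\alpha,\beta,\theta)}\,\ud\theta
    \;=\; -\tfrac12\log\beta \;+\; \E_k\!\left[\Phi\!\left(G_{q,\alpha,\beta}(k)\right)\right],
\]
with $k$ uniform on $\{0,1,\dots,q-1\}$. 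So it suffices to show $\E_k[\Phi(G_{q,\alpha,\beta}(k))]\to\E[\Phi(G_{\alpha,\beta})]$ as $q\to\infty$ over the primes and to put $c:=\beta^{-1/2}\exp(\E[\Phi(G_{\alpha,\beta})])$; the numerical value $c\approx.954$ is then read off, e.g.\ by Monte Carlo simulation of the series defining $G_{\alpha,\beta}$ or by evaluating the left‑hand side at a large prime.

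Next I would prove the convergence by a two‑sided truncation. For $0<m<M$ the capped functional $\Phi_{m,M}(f):=\int_0^1\min(\max(\log\abs{f(t)},-M),m)\,\ud t$ is bounded and continuous on $\mathscr{C}[0,1]$ (its integrand extends continuously to $f\equiv 0$, with value $-M$), so Theorem~\ref{thm:random-quadratic-process} and the definition of weak convergence give $\E_k[\Phi_{m,M}(G_{q,\alpha,\beta}(k))]\to\E[\Phi_{m,M}(G_{\alpha,\beta})]$ for each fixed $m,M$. To remove the upper cap $m$, a Parseval computation shows $\E_k[\|G_{q,\alpha,\beta}(k)\|_2^2]=q^{-1}\#\{\alpha q<n<(\alpha+\beta)q:(n,q)=1\}\to\beta$, so $\sup_q\E_k[\|G_{q,\alpha,\beta}(k)\|_2^2]<\infty$; since $\Phi(f)\le\log\|f\|_2$ by Jensen's inequality, the positive parts $\{\Phi(G_{q,\alpha,\beta}(k))^+\}_q$ are uniformly integrable, and letting $m\to\infty$ yields $\E_k[\Phi_M(G_{q,\alpha,\beta}(k))]\to\E[\Phi_M(G_{\alpha,\beta})]$ for each $M$, where $\Phi_M(f):=\int_0^1\max(\log\abs{f(t)},-M)\,\ud t\ge\Phi(f)$.

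The hard part is removing the lower cap $M$. Since $\Phi_M(f)-\Phi(f)=\int_0^1(\log\abs{f(t)}+M)^{-}\,\ud t$ is nonzero only where $\abs{f}<e^{-M}$, translating back through the identity of the first paragraph this amounts to proving that
\[
    \varepsilon(M)\;:=\;\sup_{q}\;\int_0^1\Bigl(\log\Bigl|\tfrac{1}{q^{1/2}}S(\chi_q,.2,1.1,\theta)\Bigr|+M\Bigr)^{-}\,\ud\theta
\]
tends to $0$ as $M\to\infty$: i.e.\ the incomplete character polynomial $P_q(z)=q^{-1/2}\sum_{.2q<n<1.3q}\chi_q(n)z^n$ is small on the unit circle only on a set negligible in this weighted sense, uniformly in $q$. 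Equivalently, writing $\mu_q(s):=\abs{\{\theta:\abs{P_q(e(\theta))}<s\}}$, one has $\int_0^1(\log\abs{P_q(e(\theta))}+M)^{-}\,\ud\theta=\int_0^{e^{-M}}s^{-1}\mu_q(s)\,\ud s$, which goes to $0$ uniformly in $q$ as soon as one has a uniform small‑ball bound $\mu_q(s)\ll s^{\delta}$ (equivalently a uniform fractional‑moment bound $\sup_q\int_0^1\abs{P_q(e(\theta))}^{-\delta}\,\ud\theta<\infty$) for some fixed $\delta>0$. This is exactly where I would invoke the paper's general method for the log‑integrability problem: one exploits that, up to a power of $z$, $q^{1/2}P_q$ is a polynomial with unimodular leading and trailing coefficients — so that Jensen's formula already gives $\int_0^1\log\abs{P_q(e(\theta))}\,\ud\theta\ge-\tfrac12\log q$ and pins down its content — together with a completion of $S(\chi_q,\alpha,\beta,\theta)$ in the dual variable and standard bounds for the resulting Gauss‑type sums, to force the required anticoncentration. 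Granting this, the matching bound $\E\bigl[\int_0^1(\log\abs{G_{\alpha,\beta}(t)}+M)^{-}\,\ud t\bigr]\le\varepsilon(M)$ for the limit process follows since $f\mapsto\int_0^1(\log\abs{f(t)}+M)^{-}\,\ud t$ is nonnegative and lower semicontinuous on $\mathscr{C}[0,1]$, so the portmanteau theorem (Fatou's inequality for weak limits) applies, and then
\[
    \bigl|\E_k\Phi(G_{q,\alpha,\beta}(k))-\E\Phi(G_{\alpha,\beta})\bigr|
    \;\le\; 2\varepsilon(M)\;+\;\bigl|\E_k\Phi_M(G_{q,\alpha,\beta}(k))-\E\Phi_M(G_{\alpha,\beta})\bigr|
\]
gives the claim upon letting $q\to\infty$ and then $M\to\infty$ (the finiteness of $\E\Phi(G_{\alpha,\beta})$ being immediate from the two displayed bounds). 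The sole genuine obstacle is this uniform log‑integrability estimate: a priori the fixed polynomial $P_q$ could have a high‑multiplicity zero on or near the unit circle, the arguments available for complete character sums do not transfer to incomplete ones, and handling this uniformly for a general family of such polynomials — rather than by an ad hoc estimate — is precisely the new input that makes the deduction from Theorem~\ref{thm:random-quadratic-process} go through.
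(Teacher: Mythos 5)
Your reduction is sound and mirrors the paper's overall strategy: split $[0,1)$ into the arcs $[k/q,(k+1)/q)$, view the integral as $\E_k$ of a log-functional of $G_{q,\alpha,\beta}$, apply Theorem \ref{thm:random-quadratic-process} to a bounded continuous truncation of $\log|\cdot|$ (the paper uses a smooth weight $w_\varepsilon$ rather than hard caps, an immaterial difference), handle the limit process by monotonicity/Fatou, and reduce everything to a small-value estimate that is \emph{uniform in $q$ and $k$}. Your upper-cap removal via Parseval and Jensen is fine, and your identification of $\varepsilon(M)\to 0$ (equivalently a uniform small-ball bound $\mu_q(s)\ll s^\delta$) as the one remaining obstacle is exactly right.

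But that obstacle is the actual content of the theorem, and the argument you gesture at for it does not work, so there is a genuine gap. Jensen's formula only gives $\int_0^1\log|P_q(e(\theta))|\,\ud\theta\ge -\tfrac12\log q$, a bound that degrades with $q$ and says nothing about how much mass $\log|P_q|$ can place below $-M$ uniformly in $q$; and after completing $S(\chi_q,\alpha,\beta,\theta)$ in the dual variable (Proposition \ref{prop:poisson}) one is left with a conditionally convergent series whose derivatives are also not absolutely convergent, so ``standard bounds for Gauss-type sums'' give no anticoncentration — this is precisely the point where the ad hoc arguments for complete sums (Fekete/Turyn) break down. The paper's proof supplies a genuinely new mechanism here: it forms the combination $\Delta A = 4\pi^2\beta^2 A' + A'''$ of the completed series $A(t)$, which \emph{is} absolutely convergent; it checks, by a finite numerical verification over the possible sign patterns $\chi_q(k-l)$ for small $|l|$ (with a longer check, exploiting multiplicativity, when the dominant terms $l=0,-1$ vanish, i.e.\ $k\in\{0,q-1\}$), that $[0,1]$ can be covered by a bounded number of fixed intervals on each of which $|\Re(\Delta A)|>1$ or $|\Im(\Delta A)|>1$; and it then invokes Proposition \ref{logThm} (proved via Rolle-type zero counting and a smooth-bump integration-by-parts argument) to conclude $\int_0^1\log|F_{k,\chi_q,\alpha,\beta}|\mathds 1_{|F_{k,\chi_q,\alpha,\beta}|<\varepsilon}\,\ud t\ll\varepsilon^{1/6}$ uniformly in $k$ and $q$ (Lemma \ref{lemmaLogSmall}). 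Without some argument of this kind — ruling out, uniformly over $k$ and $q$, near-vanishing of $P_q$ to high order on the circle — your chain of reductions cannot be closed, as you yourself acknowledge in your final sentence; so the proposal should be regarded as a correct framing of the problem rather than a proof.
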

The values of $\alpha$ and $\beta$ come from an approximate numerical
calculation, and we do not wish to give the impression that the Mahler
measure is maximized at these precise values.
The flexibility of our methods, however, offers a possible way to modify
the construction of the polynomials further, obtaining better bounds. We will explore this in
future work.

In fact, Mossinghoff \cite{mossinghoff-mahler} already conjectured that a larger normalized Mahler measure might
be achievable by considering $\beta \ne 1$, in analogy with the result of Jedwab,
Katz and Schmidt \cite{jedwab} that the merit factor of the generalized Turyn
polynomial is maximized at $\alpha \approx 0.221$ and $\beta = 1.058$. Using  methods of this paper one can show that for these values
\[
   \frac{1}{(1.058q)^{1/2}} \exp\left(\int_0^1 \log \abs{S(\chi_q, 0.221, 1.058, \theta)} \ud \theta\right) \approx 0.9535,
\]
which also provides an improvement over the bound in \cite{mossinghoff-mahler} and
shows that for the generalized Turyn polynomials, the merit factor and the
normalized Mahler measure are not maximized at the same point.

%These extremal questions make sense for any $L_\lambda$ norm, of course.
%Mossinghoff refers to the maximization of the $L_1$ norm of Littlewood
%polynomials as Newman's problem, and records the new lower bound of
%$0.9775\ldots$ for the ratio $||P_n||_1/\sqrt{n}$ for an infinite family of
%Littlewood polynomials. This is improved slightly to $0.9783\ldots$ by
%$S(\chi_q, .2, 1.1, \theta)$.

Similar extremal problems have been considered for other norms, most notably
for the $L_1$ norm, which is known as Newman's problem. Mossinghoff records the
new lower bound of $0.9775\ldots$ for the ratio $||P_n||_1/\sqrt{n}$ for an
infinite family of Littlewood polynomials. Following our methods, this is
improved slightly to $0.9783\ldots$ by the generalized Turyn polynomials with
parameters $\alpha = 0.2$ and $\beta=1.1$.

\subsection{Application to the G\"{u}nther--Schmidt conjecture}
\phantom{M}In 2017, G\"{u}nther and Schmidt \cite{GuentherSchmidt2017} determined 
 the limiting value of the normalized $L_{2k}$ norm of the Fekete and Turyn
polynomials when $k\in\mathbb{N}$. More precisely, they proved that there is a function
$\phi_k:\mathbb{R}\to\mathbb{R}$ such that
$\lim_{q\to\infty}\frac{1}{{\sqrt{q}}}\|S(\chi_q, \alpha, 1, \cdot)\|_{2k}
=\phi_k(\alpha) $ and described a method
to compute $\phi_k(0).$ The expressions for $\phi_k(\alpha)$ are given by
rather complicated recursive combinatorial identities, which make them hard to
use in practice. However, they showed that $\phi_k(\alpha)$ attains its minimum
at $\alpha=\frac{1}{4}$ for $k=2,3,4$ and conjectured that this continues to
hold for every $k\ge 5.$ As an application of our results, we confirm this
conjecture.

\begin{theorem}\label{GSP_extremal}
    For every integer $k\ge 2,$ we have
    $$\operatorname{argmin}_{\alpha\in [0,1]}\phi_k(\alpha)=\frac{1}{4}. $$
\end{theorem}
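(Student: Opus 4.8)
The plan is to bypass the combinatorial recursion for $\phi_k(\alpha)$ entirely and instead identify $\phi_k(\alpha)^{2k}$ with a moment of the limiting random process from Theorem~\ref{thm:random-quadratic-process}. Concretely, writing $F_{q,\floor{\alpha q}}$ for the Turyn-type polynomial with shift $\floor{\alpha q}$ and $\beta=1$, the $L_{2k}$-norm is $\frac{1}{q^k}\int_0^1 |F_{q,\floor{\alpha q}}(e(t))|^{2k}\,\ud t$, which after the change of variables relating $\theta=(k+t)/q$ to the normalised sums $F_{k,\chi_q,\alpha,1}$ is exactly an average of $|G_{q,\alpha,1}(t)|^{2k}$ over $k\in\{0,\dots,q-1\}$ and $t\in[0,1]$ — i.e.\ an expectation of an integrated $2k$-th power for the process $G_{q,\alpha,1}$. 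By Theorem~\ref{thm:random-quadratic-process} this converges (modulo a uniform-integrability argument of the type already needed for the log-integrability results, which I would cite or adapt from the relevant section) to $\E\int_0^1 |G_{\alpha,1}(t)|^{2k}\,\ud t$. Matching with G\"unther--Schmidt's identification of the limit, this gives the clean formula
\begin{equation}\label{eq:phik-moment}
\phi_k(\alpha)^{2k} = \E\int_0^1 \left| \frac{e(\alpha t)}{2\pi i}\sum_{l\in\Z}\frac{e(\alpha l)(e(l+t)-1)}{l+t}\,\bY(l) \right|^{2k}\ud t,
\end{equation}
where the $\bY(l)$ are i.i.d.\ uniform on $\{-1,1\}$. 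The factor $e(\alpha t)$ has modulus $1$ and drops out.

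Next I would expand the $2k$-th power in \eqref{eq:phik-moment} and take the expectation over the Rademacher variables $\bY(l)$. Because the $\bY(l)$ are independent with mean zero and $\bY(l)^2=1$, only terms in which every index appears an even number of times survive; the surviving contribution is a sum over multi-indices $(l_1,\dots,l_k)$ and $(l_1',\dots,l_k')$ that are equal as multisets, with coefficients built from $c_l(t):=\frac{e(l+t)-1}{l+t}$ and its conjugates, times an integer combinatorial weight coming from the pairings. Crucially, the $\alpha$-dependence enters only through phase factors $e(\alpha(l_1+\cdots+l_k - l_1'-\cdots-l_k'))$ attached to each monomial. Grouping the monomials by the value $m:=l_1+\cdots+l_k-l_1'-\cdots-l_k'$, one obtains a Fourier expansion
\begin{equation}\label{eq:fourier-alpha}
\phi_k(\alpha)^{2k} = \sum_{m\in\Z} A_m\, e(\alpha m), \qquad A_m = \overline{A_{-m}},
\end{equation}
where $A_m$ is a (nonnegative-coefficient-weighted) sum over constrained multisets of a product of the real-analytic functions $\int_0^1 \prod c_{l_j}(t)\overline{c_{l_j'}(t)}\,\ud t$. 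The key structural point is that $A_0$ collects exactly the ``diagonal'' terms where the two multisets $\{l_j\}$ and $\{l_j'\}$ coincide, so $A_0 = \E\int_0^1|\cdots|^{2k}\,\ud t$ with the phases removed $= \int_0^1 (\sum_l |c_l(t)|^2)^{?}\ldots$ — in any case $A_0$ is manifestly real and is the $\alpha$-free part.

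To conclude $\mathrm{argmin}_\alpha \phi_k(\alpha)^{2k} = 1/4$, I would show the real part of \eqref{eq:fourier-alpha} is minimised at $\alpha=1/4$. At $\alpha=1/4$ one has $e(\alpha m)=i^m$, so the $m$-th and $(-m)$-th terms contribute $2\Re(A_m i^m)$, which vanishes when $m$ is odd and equals $\pm 2\Re(A_m)(-1)^{m/2}$ when $m$ is even. The heart of the matter, and the step I expect to be the main obstacle, is a sign/parity analysis of the coefficients $A_m$: one must show that $A_m=0$ for all odd $m$ (a parity obstruction — I expect this to follow because the constraint that $\{l_j\}$ and $\{l_j'\}$ agree as multisets after the symmetrisation forces $m$ even, or more robustly from a symmetry $l\mapsto$ reflection argument on the $c_l$), and that for even $m=2r$ the quantity $(-1)^r \Re(A_{2r}) \le 0$ with enough strictness, OR alternatively that the full function $\alpha\mapsto \sum_{m}A_m e(\alpha m)$ is a genuine even function about $1/4$ with a definite second-derivative sign there. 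A cleaner route to the same end, which I would try first, is to exploit a functional symmetry of the process itself: the substitution $l\mapsto -1-l$ sends $c_l(t)$ to (a phase times) $\overline{c_{-1-l}(1-t)}$ and $e(\alpha l)\mapsto e(\alpha(-1-l)) = e(-\alpha)e(-\alpha l)$, which should exhibit $\phi_k(\alpha)=\phi_k(\frac12-\alpha)$ and reduce the problem to showing $\phi_k$ has no interior critical point on $(0,\tfrac14)$ other than via convexity — this convexity (or the sign of a single Fourier coefficient dominating) is where the real work lies, and may require a monotonicity estimate on the integrals $\int_0^1 \prod c_{l_j}(t)\overline{c_{l_j'}(t)}\,\ud t$ that I would prove by a direct, if tedious, contour or Parseval computation.
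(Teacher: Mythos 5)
Your setup coincides with the paper's: both express the limit of the normalized $L_{2k}$ norm (via Theorem \ref{thm:random-quadratic-process}, with the uniform-integrability caveat you rightly note) as $\E\int_0^1\lvert G_{\alpha,1}(t)\rvert^{2k}\,\ud t$, expand the $2k$-th power, and average over the Rademacher variables $\bY(l)$ so that the $\alpha$-dependence survives only in phases attached to the pairing structure. Your parity guess is correct and is exactly the paper's observation: indices matched across the two halves cancel their phases, and a repetition within one half forces a repetition in the other, so the surviving phases are all of the form $e(2\alpha(m-n))$; this gives period $\tfrac12$ and, with conjugation symmetry, symmetry of $\phi_k$ about $\alpha=\tfrac14$. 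But symmetry only makes $\tfrac14$ a critical point. Everything after that in your proposal --- the sign dominance $(-1)^r\Re(A_{2r})\le 0$, or convexity of $\phi_k$ on $(0,\tfrac14)$ --- is precisely the step you flag as ``where the real work lies,'' and it is neither carried out nor obviously tractable; the paper proves neither of these statements.

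What the paper actually does, and what your proposal misses, is a factorization that makes the minimization pointwise in $t$ and explicit. Grouping the paired indices, each nonvanishing term splits into an $\alpha$-free factor whose denominator is a perfect square (hence nonnegative) times $M_{2r,t}(\alpha)=\bigl\lvert\sum_{m\in\Z}e(2m\alpha)/(m+t)^2\bigr\rvert^{2r}$. It therefore suffices to show that $\bigl\lvert\sum_{m\in\Z}e(2m\alpha)/(m+t)^2\bigr\rvert^2$ is minimized at $\alpha=\tfrac14$ for every fixed $t\in(0,1)$. This is done via an integral representation of the Lerch transcendent: the squared modulus becomes a combination of integrals with nonnegative integrands whose denominators are $1-2x\cos(4\pi\alpha)+x^2$, and an elementary monotonicity argument in $\cos(4\pi\alpha)$ shows each piece (including the cross term, after a rescaling) is minimized when $\cos(4\pi\alpha)=-1$. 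Without this reduction --- or a genuine proof of your coefficient-sign or convexity claims, for which you offer no argument --- the proposal stops short of the theorem.
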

Theorem \ref{GSP_extremal} implies that the polynomials
$S(\chi_q, \frac{1}{4}, 1, \theta)$ tend to
be somewhat flat, and therefore it is natural to expect that the Mahler
measure $M_0(S(\chi_q, \frac{1}{4}, 1, \cdot))$ is large, explaining the large value
achieved in \cite{mossinghoff-mahler}.

Finally, we remark that by using the same method as in the proof of Theorem  \ref{GSP_extremal}, one would be able to directly recover the main result of \cite{jedwab} on the largest known merit factor of Littlewood polynomials, offering a different proof.  

\section{Roadmap to the proofs}\label{roadmap}
We now briefly describe some key steps in our proofs,
and at the end of this section indicate more precisely how Theorems \ref{thm:random-process} and \ref{thm:random-quadratic-process}
are proved.
The starting point in the proof of both Theorem \ref{thm:random-process} and
Theorem \ref{thm:random-quadratic-process} is an application of the quantitative  Poisson
summation formula (see Proposition \ref{prop:poisson}) which transforms
exponential sums into conditionally convergent series
\begin{equation}\label{intro_trunc}
   \frac{e(\alpha t)\tau(\chi)}{2\pi i q^{1/2}} \sum_{l \in \Z}
    \frac{e(\alpha l) (e(\beta(l + t)) - 1)}
    {l + t }
    \chibar(k-l).
\end{equation}
From this formula we begin to see how the random processes that appear in
Theorems \ref{thm:random-process} and \ref{thm:random-quadratic-process}
emerge. Indeed, for most characters, the sum \eqref{intro_trunc} is well
approximated by a short sum, and so via the method of moments we will establish convergence to the random process
\[
    F_{k, \alpha,\beta}(t) = \bX \frac{e(\alpha t)}{2\pi i}
    \sum_{l \in \Z}
    \frac{e(\alpha l) e((\beta(l + t)) - 1)}
    {l + t}
    \bW(k-l).
\]
As we shall see, for large $k=k_q\to\infty$, multiplicativity generally becomes unimportant at the cost of removing a
zero density subset of primes, so that the limiting process becomes
\[
    F_{\alpha,\beta}(t) = \frac{e(\alpha t)}{2\pi i}
    \sum_{l \in \Z}
    \frac{e(\beta(l + t)) - 1}
    {l + t}
    \bX(l).
\]
Here the $\bX$ corresponding to the Gauss sum and the $e(\alpha l)$
terms have gone away, as they do not change the random process.

In Theorem \ref{thm:random-quadratic-process}, when we fix the quadratic
character and choose $k$ uniformly at random, we follow similar arguments, but
now we exploit randomness coming from the shifts $\{\chi(k-l)\}_{l\le L(q)}$ to arrive at the limiting process 
\[
    G_{\alpha,\beta}(t) = \frac{e(\alpha t)}{2 \pi i}
    \sum_{l \in \Z}
    \frac{e(\alpha l) (e(\beta(l + t)) - 1)}
    {l + t}
    \bY(l).
\]

An interesting feature of Theorem \ref{thm:random-process} is the presence of the exceptional  set of primes to facilitate convergence of the sequence of random processes $F_{k, q,\alpha,\beta}.$ 

Roughly speaking, when analyzing moments of the distribution
$F_{k,q, \alpha, \beta}$, we are led to consider solutions to the polynomial
equations
\begin{equation}\label{eq:modp-condition}
    \prod_{i=1}^d (k + m_i) \equiv \prod_{j=1}^d (k + n_j) \mod q
\end{equation}
where $k = k_q$ and the variables $m_i$ and $n_j$ are
small (relative to $q$). One would like to argue that the mod
$q$ condition becomes irrelevant as $q$ gets large, so that the moments of \eqref{intro_trunc} coincide with the random multiplicative model $F_{k, \alpha, \beta}$. This is indeed what happens for fixed $k$. The mod $q$ condition also becomes irrelevant when $k_q = q^{o(1)}$, and in this case the moments of \eqref{intro_trunc} coincide with the random model $F_{\alpha, \beta}$ without multiplicativity. However, for certain values of $k=k_q\to\infty$, this need not be the case; if $k \equiv (q+1)/3 \pmod q$ (or $k = \floor{q\theta}$ is very close to $(q+1)/3$ for irrational $\theta$), say,  then this
equation becomes
\[
    \prod_{i=1}^d (1 + 3m_i) \equiv \prod_{j=1}^d (1 + 3n_j) \mod q
\]
and the mod $q$ condition does indeed become superfluous by size
considerations, but this shows that the mod $q$ condition was an
essential feature of the \emph{first} equation \eqref{eq:modp-condition}. In the case $k_q = \floor{q\theta}$,
such linear coincidences happen only finitely often for any irrational
$\theta$, however, so do not present a problem.

More insidious are the coincidences of a higher degree. \phantom{.}For any fixed choice
of $m_1, \ldots, m_d$ and $n_1, \ldots, n_d$, the equation
\eqref{eq:modp-condition} becomes a degree $d-1$ equation in $k$, so we naturally arrive at
the question of how often is $k = k_q$ the solution of
polynomial equations with small coefficients mod $q.$ In Lemma \ref{lemma:fpalpha}, we show that when $k_q = o(\pi(q))$ or $k_q = \floor{q\theta}$ for irrational $\theta$, this happens for $o(\pi(X))$ choices of primes $q\le X$. We then use iterative density arguments to construct a density one set of suitable primes $\cP_{\theta}$ to avoid such coincidences for higher moments when $q\to\infty.$ 

We now describe the key ideas to prove Theorem \ref{thm5}.
We write the logarithmic Mahler measure of $S_q(\theta) := S(\chi_q, \alpha, \beta, \theta)$ as
\[
    \log M_0(S_q)=\int_0^1\log \abs{q^{1/2} \frac{S_q(\theta)}{q^{1/2}}} \ud \theta
    = \log \sqrt{q} + \frac 1q \sum_{k=0}^{q-1} 
    \int_0^1 \log \bigg| F_{k, \chi_q,\alpha,\beta}(t) \bigg| \ud t.
\]
We will eventually show that
\[
    \frac 1q \sum_{k=0}^{q-1} 
    \int_0^1 \log \bigg| F_{k, \chi_q,\alpha,\beta}(t) \bigg| \ud t
        \longrightarrow 
        \E\left[\int_0^1 \log\abs{G_{\alpha,\beta}(t)} \ud t\right].
\]
Note, however, that the functional $\ell(f)=\int_{0}^1\log |f(t)|dt$ is not
continuous on $\mathscr C[0,1]$ and so we cannot immediately apply Theorem
\ref{thm:random-quadratic-process}.
We thus have to deal with the issues of uniform log-integrability, which is
a difficult problem for both random and deterministic series.
To this end, we consider the regularized continuous functional
$$\ell_{\varepsilon}(f)=\int_{0}^{1}\log \abs{f(t)}w_{\varepsilon}(\abs{f(t)})\ud t$$ with $w_{\varepsilon}(t)$ a smooth minorant of the indicator function $\mathds 1_{t\geq\varepsilon}$, to which our convergence results of Theorem
\ref{thm:random-quadratic-process} apply more easily.

Our problem then reduces to estimating the logarithmic integrals $$\int_0^1
\log\lvert f(t)\rvert\mathds1_{\lvert f(t)\rvert<\varepsilon}\text{d} t$$
when $\varepsilon$
is small and $f$ runs over all functions of the form \eqref{intro_trunc}. We
note that neither the series of $f$ nor any of its higher order derivatives
converge absolutely (unlike in the case of Fekete and Turyn polynomials)
and so new ideas are needed to prove uniform bounds. To overcome this significant obstacle, we prove the following soft general result,
possibly of independent interest (which we shall use in future work).
\begin{proposition}
\label{prop:intro_logThm}
    Let $\Delta = \sum_{i=0}^k a_i \partial^i$ be a linear differential operator of order $k$ with constant real coefficients, and suppose that $\Delta f(t)\geq 1$ for all $t\in[\alpha, \beta]$ for $f\in\mathscr C^{k}[\alpha,\beta].$ Then for small enough $\varepsilon>0$, we have
    \begin{equation*}
        \int_{\alpha}^{\beta} \log\lvert f(t)\rvert \mathds1_{\lvert f(t)\rvert<\varepsilon}\ud t \ll \varepsilon^{\frac{1}{2k}},
    \end{equation*}
    where the implied constant is uniform in $f$, but may depend on $\Delta, \alpha$ and $\beta$. 
\end{proposition}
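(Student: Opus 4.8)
The plan is to reduce the estimate on the measure of the ``small set'' $S_\varepsilon = \{t \in [\alpha,\beta] : |f(t)| < \varepsilon\}$ to a statement about how often a function whose $k$-th order differential image is bounded below can be small, and then integrate $\log|f|$ over $S_\varepsilon$ using a dyadic decomposition. The key structural input is that the hypothesis $\Delta f \geq 1$ on $[\alpha,\beta]$ forces $f$ not to linger near zero: if $f$ were small on a long subinterval, then (since $\Delta f$ is a fixed linear combination of $f, f', \ldots, f^{(k)}$) we could derive a contradiction with $\Delta f \geq 1$. Concretely, I would first prove the combinatorial heart of the matter: there is a constant $C = C(\Delta,\alpha,\beta)$ such that for every $\eta > 0$, the set $\{t : |f(t)| < \eta\}$ is contained in a union of $O(1)$ intervals each of length $O(\eta^{1/k})$, hence $|S_\varepsilon| \ll \varepsilon^{1/k}$ -- wait, we want $\varepsilon^{1/2k}$, so the bound on the measure will be the weaker $|S_\varepsilon| \ll \varepsilon^{1/k}$ and the loss of the square root comes in the logarithmic integration step, as explained below.

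The measure bound would go as follows. Suppose $|f| < \eta$ on an interval $I$ of length $\delta$. By a Markov-type / repeated mean value theorem argument, control on $\sup_I |f|$ gives control on $\sup_I |f^{(j)}|$ for $j \leq k$ on a slightly shrunk subinterval, with losses of powers of $\delta$ in the denominator: roughly $\sup |f^{(j)}| \ll \eta/\delta^{j} + (\text{contribution of }\Delta f)$. More carefully, I would use the Landau--Kolmogorov inequalities (or just iterate Rolle's theorem) to bound the intermediate derivatives $f^{(j)}$, $0 < j < k$, in terms of $\sup|f|$ and $\sup|f^{(k)}|$ on $I$. Feeding these bounds into the identity $a_k f^{(k)} = \Delta f - \sum_{i<k} a_i f^{(i)}$ together with $\Delta f \geq 1$, and assuming $\delta$ is not too small, one finds $\sup_I |f^{(k)}| \gg 1$ on some subinterval; but $\sup|f^{(k)}| \ll \eta/\delta^k$ unless this subinterval is short. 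Balancing gives: either $\delta \ll \eta^{1/k}$, or one runs into a contradiction. Since $\Delta f \geq 1 > 0$ everywhere, $f^{(k)}$ (if $a_k\neq 0$) or more precisely the combination $\Delta f$ is sign-definite, which limits the number of ``small components'' of $f$ to $O(1)$: each maximal interval on which $|f| < \varepsilon$ can be grouped, and between consecutive small components $f$ must rise above $\varepsilon$ and the monotonicity-type control from $\Delta f \geq 1$ bounds their number. Hence $|S_\varepsilon| \ll \varepsilon^{1/k}$, and moreover $S_\varepsilon$ lies in $O(1)$ intervals of length $O(\varepsilon^{1/k})$.

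For the logarithmic integral, I would decompose $S_\varepsilon$ dyadically: write $S_\varepsilon = \bigsqcup_{j \geq 0} A_j$ with $A_j = \{t : 2^{-j-1}\varepsilon \leq |f(t)| < 2^{-j}\varepsilon\}$. Applying the measure bound with $\eta = 2^{-j}\varepsilon$ gives $|A_j| \leq |\{|f| < 2^{-j}\varepsilon\}| \ll (2^{-j}\varepsilon)^{1/k}$, and on $A_j$ we have $|\log|f(t)|| \ll j + |\log\varepsilon|$. Therefore
\[
    \int_{S_\varepsilon} |\log|f(t)||\,\ud t \ll \sum_{j \geq 0} (j + |\log\varepsilon|)\,(2^{-j}\varepsilon)^{1/k} \ll |\log \varepsilon|\,\varepsilon^{1/k}\sum_{j\geq 0}(j+1)2^{-j/k} \ll |\log\varepsilon|\,\varepsilon^{1/k},
\]
since the geometric-type series converges (as $2^{-1/k} < 1$). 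Finally $|\log\varepsilon|\,\varepsilon^{1/k} \ll \varepsilon^{1/2k}$ for $\varepsilon$ small enough, which gives the claimed bound. The main obstacle I anticipate is the first step: making the derivative-transfer argument clean and uniform in $f$, i.e.\ correctly tracking how the lower bound $\Delta f \geq 1$ propagates through the Landau--Kolmogorov estimates to force the components of $S_\varepsilon$ to be both few in number and short; the presence of lower-order terms $a_i\partial^i$ with $i<k$ in $\Delta$ (rather than just $\partial^k$) is what makes this delicate, since those terms could a priori conspire to keep $\Delta f$ large while $f^{(k)}$ stays small, and one has to rule this out by a scaling/balancing argument on the length of the bad interval. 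I would expect the exponent $\tfrac{1}{2k}$ (rather than the heuristic $\tfrac1k$) to be an artifact of absorbing the $|\log\varepsilon|$ factor and of slack in the derivative transfer, and it is all that is needed downstream.
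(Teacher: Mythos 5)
Your overall architecture is the same as the paper's: prove a measure bound $\mu\{t:|f(t)|<\eta\}\ll\eta^{1/k}$ valid for all small $\eta$, decompose the set $\{|f|<\varepsilon\}$ dyadically according to the size of $|f|$, and absorb the resulting $\log(1/\varepsilon)$ factor into $\varepsilon^{1/2k}$; that last integration step in your proposal is correct and is exactly what the paper does. The problems are in how you propose to prove the measure bound, and there are two genuine gaps. First, the central claim of your balancing argument, that $|f|\le\eta$ on an interval $I$ of length $\delta$ forces $\sup_I|f^{(k)}|\ll\eta/\delta^k$, is false for general $C^k$ functions: Markov-type inequalities of this shape hold for polynomials of bounded degree, not for arbitrary smooth functions (consider $f(t)=\eta\sin(Nt)$ with $N$ huge). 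Landau--Kolmogorov inequalities bound the \emph{intermediate} derivatives in terms of $\|f\|_\infty$ \emph{and} $\|f^{(k)}\|_\infty$, so they can never produce an upper bound on $\|f^{(k)}\|_\infty$ itself, and if $\|f^{(k)}\|_\infty$ is huge the inequality $\Delta f\ge 1$ yields no contradiction by size considerations alone. The pointwise positivity has to be exploited by integration: the paper integrates $\Delta f\ge 1$ against a smooth nonnegative bump $\varphi$ supported on the bad interval, equal to $1$ on its middle third and with all derivatives vanishing at the endpoints, and integrates by parts to move $\Delta$ onto $\varphi$; since $\|\varphi^{(i)}\|_\infty\ll\delta^{-i}$ and $|f|<\eta$, this gives $\delta\ll\eta\,\delta^{1-k}$, i.e.\ $\delta\ll\eta^{1/k}$. (Equivalently, you could mollify $f$ at scale $\delta$: convolution with a nonnegative bump preserves the inequality $\Delta(\cdot)\ge 1$, and the mollified function \emph{does} satisfy the derivative bounds you wanted. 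Either repair is essentially the paper's duality argument.)

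Second, your bound on the \emph{number} of connected components of $\{|f|<\varepsilon\}$ is asserted, not proved: the phrase that ``the monotonicity-type control from $\Delta f\ge 1$ bounds their number'' is precisely the point that needs an argument, since a priori $f$ could oscillate between, say, $\varepsilon/2$ and $2\varepsilon$ many times, producing many short components whose total measure is not small. The paper devotes a separate lemma to this: factor $\Delta$ over $\R$ into first-order factors $\partial-c$ and irreducible quadratic factors $\partial^2-a\partial+b$, write each as a composition of first-order operators conjugated by explicit nonvanishing weights (e.g.\ $(\partial-c)g=e^{cx}\partial(e^{-cx}g)$), and apply Rolle's theorem to conclude that $f$ has at most $O_\Delta(1)$ more zeros than $\Delta f$; since $\Delta f\ge 1$ never vanishes, $f$ has $O_\Delta(1)$ zeros, which (together with the length bound) controls the small set. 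Without some version of this disconjugacy/Rolle input, your measure bound $|S_\varepsilon|\ll\varepsilon^{1/k}$ does not follow from the per-component length bound, so as written the proposal has a real gap at both of the two key lemmas, even though the reduction and the final dyadic computation match the paper.
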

With this in hand, we cook up a suitable linear differential operator $\Delta$
and cover $[0,1]$ by a number of {\it fixed} intervals, such that
 either $|\Re(\Delta f)| \ge 1$ or
$ |\Im(\Delta f)|\ge 1$ across each interval and then apply Proposition
\ref{prop:intro_logThm} to reach the conclusion.
 
Finally, in order to prove Theorem \ref{GSP_extremal}, we use our Theorem
\ref{thm:random-quadratic-process} to conveniently rewrite the $L_{2k}$ norms and then establish a relevant monotonicity property of the modified Lerch
transcendent to arrive at the result.

\subsection{Proofs of Theorems 1 and 2}

We use the approach of Prokhorov to prove relative compactness of our
sequences of random processes. This will allow us to determine the
existence of limiting distributions by computing and matching moments of finite
dimensional distributions.

\begin{proposition}\label{prop:prokhorov-criteria}\cite{prokhorov}*{Theorem 2.1}
    Let $L_k(t)$ be a sequence of $\mathscr C[0,1]$-valued random processes.
    If there exist some constants $A > 0$, $B > 0$ and $C > 0$
    such that
    \begin{equation}\label{kolmogorov-condition}
        \E \abs{L_k(s) - L_k(t)}^A < C\abs{s - t}^{1 + B}
    \end{equation}
    for all (sufficiently large) $k$ and for all $s, t\in [0, 1]$, and if the finite
    dimensional distributions of $L_k(t)$ tend to some limits, then $L_k(t)$
    converges weakly to some random process $L$.
\end{proposition}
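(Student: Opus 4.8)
The plan is to obtain this from Prokhorov's theorem combined with the Kolmogorov moment criterion for tightness in $\mathscr{C}[0,1]$. Since $\mathscr{C}[0,1]$ equipped with the sup norm is a Polish space, Prokhorov's theorem tells us that a family of laws on it is relatively compact---meaning every subsequence admits a weakly convergent sub-subsequence---precisely when it is tight. Thus the proof has two movements: first, show that hypothesis \eqref{kolmogorov-condition} forces the sequence $(L_k)$ to be tight; second, use the assumed convergence of the finite dimensional distributions to show that every subsequential weak limit has the same law, so that the full sequence converges.

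For the tightness step I would invoke the standard criterion in $\mathscr{C}[0,1]$: $(L_k)$ is tight if and only if the real (or complex) random variables $L_k(0)$ are tight and the moduli of continuity satisfy $\lim_{\delta \to 0} \limsup_k \prob\bigl( \omega_{L_k}(\delta) \ge \varepsilon \bigr) = 0$ for every $\varepsilon > 0$, where $\omega_f(\delta) = \sup_{\abs{s-t} \le \delta} \abs{f(s) - f(t)}$. The first condition holds because $L_k(0)$ converges in distribution by hypothesis and any convergent-in-distribution sequence on $\R$ is tight; the finitely many small-$k$ terms excluded by the phrase ``sufficiently large'' do not affect tightness, since adjoining finitely many laws to a tight family on a Polish space keeps it tight. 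For the second, one runs a dyadic chaining argument: bounding $\abs{L_k(s) - L_k(t)}$ by a telescoping sum over dyadic rationals, applying Markov's inequality to each increment via $\E \abs{L_k(j2^{-n}) - L_k((j+1)2^{-n})}^A \le C\, 2^{-n(1+B)}$, and summing the resulting geometric series (which converges because $B > 0$), one obtains a bound of the shape $\prob(\omega_{L_k}(\delta) \ge \varepsilon) \le K(A,B,C)\, \varepsilon^{-A} \delta^{B}$ valid for all small $\delta$ and, crucially, uniformly in $k$. This is exactly the quantitative form of Kolmogorov's continuity theorem, and it delivers the required modulus estimate.

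With tightness in hand the identification of the limit is soft. By Prokhorov, any subsequence of $(L_k)$ has a further subsequence with $L_{k_j} \Rightarrow L$ for some $\mathscr{C}[0,1]$-valued process $L$. For each finite set $0 \le t_1 < \dots < t_m \le 1$ the evaluation map $f \mapsto (f(t_1),\dots,f(t_m))$ is continuous on $\mathscr{C}[0,1]$, so the continuous mapping theorem gives $(L_{k_j}(t_1),\dots,L_{k_j}(t_m)) \Rightarrow (L(t_1),\dots,L(t_m))$; but by hypothesis these vectors converge in distribution to a limit that does not depend on the chosen subsequence, so every subsequential limit $L$ has the same finite dimensional distributions. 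Since finite dimensional distributions determine the law of a random element of $\mathscr{C}[0,1]$, all subsequential limits coincide, and the usual subsequence principle then forces $L_k \Rightarrow L$.

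The one genuine technical point is the uniform-in-$k$ modulus-of-continuity bound in the tightness step: one must verify that the chaining estimate depends only on the constants $A$, $B$, $C$ appearing in \eqref{kolmogorov-condition} and not on the particular process $L_k$, and that the argument genuinely needs $B$ strictly positive (so that the dyadic sum converges, at the price of a slightly smaller effective Hölder exponent). Everything else---relative compactness from Prokhorov's theorem and the extraction of the limit from the finite dimensional distributions---is formal once this uniform estimate is established.
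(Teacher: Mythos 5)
Your proposal is correct; the paper gives no proof of this proposition but simply cites it to Prokhorov (Theorem 2.1 of the cited work), and your argument---the Kolmogorov--Chentsov dyadic chaining bound for a uniform-in-$k$ modulus of continuity, tightness of $L_k(0)$ from the convergent finite dimensional distributions, Prokhorov's theorem for relative compactness on the Polish space $\mathscr{C}[0,1]$, and identification of all subsequential limits through the continuous evaluation maps---is exactly the standard proof of the cited result (as in Prokhorov's original paper or Billingsley). Nothing further is needed.
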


Instead of working directly with the random processes $F_{k,q,\alpha,\beta}$ and
$G_{q,\alpha,\beta}$, we define truncated versions $\tilde F_{k,q,\alpha,\beta}$
and $\tilde G_{q,\alpha,\beta}$ and prove in Proposition \ref{proposition:truncation}
that a sequence of these truncated random processes converge if and only if the
untruncated processes converge. This is a technical convenience, but it also
somewhat necessary; the moments of the untruncated random processes will not
converge without removal of the trivial character and moreover, the size of the truncation parameter will play a role in our counting arguments handling various ranges of $k_q.$

In Lemma \ref{lemma:tightness} we show that $\tilde F_{k,q,\alpha,\beta}$ and
$\tilde G_{q,\alpha,\beta}$ satisfy the tightness condition
\eqref{kolmogorov-condition}. (Though we do not need this fact, from this combined
with the moment bounds of Section \ref{sec:moments} we may conclude
that the sequence $\tilde F_{k_q,q, \alpha,\beta}$ is relatively compact for any $k_q$.)

Finally, to apply Proposition \ref{prop:prokhorov-criteria}, we prove that
all mixed moments of all of finite dimentional distributions of
$\tilde F_{k,q,\alpha,\beta}$ tend to those of $F_{k,\alpha,\beta}$ when
$k$ is fixed (Proposition \ref{prop:chi-moments}) or tend to those of
$F_{\alpha,\beta}$ when $k$ tends to infinity, after possibly removing some primes (Proposition \ref{prop:chi-moments2}),
in each of the cases (2), (3), and (4) of Theorem \ref{thm:random-process}. These propositions also include bounds
for the moments which show that they determine the distribution (Carleman's condition),
yielding a proof of Theorem \ref{thm:random-process}. Similarly, we deal with the
quadratic case in Proposition \ref{prop:quadratic-moments}, completing the
proof of Theorem \ref{thm:random-quadratic-process}.

\section{The random processes and truncation}

We begin by recalling the ``twisted'' Poisson summation formula.
\begin{lemma}\label{lemma:poisson}
    For a primitive Dirichlet character $\chi$ mod $q$ and $f \in L^1(\R)$ of
    bounded variation
    \[
        \sum_{n \in \Z} \tilde f(n) \chi(n)
            = \frac{1}{q} \tau(\chi) \sum_{k \in \Z} \hat f\left(\frac{k}{q}\right) \chibar(k),
    \]
    where
    \[
    \tilde f(n) = \frac{1}{2}\lim_{\varepsilon \rightarrow 0}\big(f(n + \varepsilon) + f(n-\varepsilon)\big)
    \]
    and $\tau(\chi)$ denotes the Gauss sum of $\chi$. 
\end{lemma}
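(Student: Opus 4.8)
The plan is to derive the identity by combining the classical Poisson summation formula for functions of bounded variation with the defining property of primitive characters encoded in Gauss sums. The first step is to break the sum over $n$ into residue classes modulo $q$: writing $n = qm + a$ with $a$ running over a complete set of residues mod $q$ and using $\chi(n) = \chi(a)$, we get
\[
    \sum_{n \in \Z}\tilde f(n)\chi(n) = \sum_{a \bmod q}\chi(a)\sum_{m \in \Z}\tilde f(qm + a).
\]

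Next, for each fixed $a$ I would apply Poisson summation to the rescaled function $g_a(x) := f(qx + a)$, which is again in $L^1(\R)$ and of bounded variation. With the normalization $\hat f(\xi) = \int_\R f(x) e(-x\xi)\,\dx$, a linear change of variables gives $\hat{g_a}(k) = q^{-1} e(ak/q)\,\hat f(k/q)$, and since the regularization $\tilde{\,\cdot\,}$ is a two-sided limit one has $\tilde{g_a}(m) = \tilde f(qm + a)$, so the Poisson summation identity for $g_a$ reads
\[
    \sum_{m \in \Z}\tilde f(qm + a) = \frac{1}{q}\sum_{k \in \Z}\hat f\!\left(\frac{k}{q}\right) e\!\left(\frac{ak}{q}\right),
\]
the sum on the right taken as a symmetric limit. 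Substituting back and interchanging the \emph{finite} sum over $a$ with the sum over $k$ gives
\[
    \sum_{n \in \Z}\tilde f(n)\chi(n) = \frac{1}{q}\sum_{k \in \Z}\hat f\!\left(\frac{k}{q}\right)\sum_{a \bmod q}\chi(a)\,e\!\left(\frac{ak}{q}\right).
\]
The inner sum is the standard twisted Gauss sum, equal to $\chibar(k)\,\tau(\chi)$ for every integer $k$ --- including those with $\gcd(k,q) > 1$, for which it vanishes --- precisely because $\chi$ is primitive; this is exactly where primitivity is used. Plugging this in yields the claimed formula.

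The one genuinely analytic point, and the step I expect to need the most care, is justifying Poisson summation in exactly this generality: for an $L^1$ function of bounded variation, with the midpoint convention at jumps and only conditional (symmetric) convergence on the Fourier side. I would prove this by periodizing: the function $G_a(x) := \sum_{m \in \Z} g_a(x + m)$ is well defined almost everywhere, has variation over one period bounded by the total variation of $g_a$ on $\R$ (so in particular is itself of bounded variation), and has $k$-th Fourier coefficient $\hat{g_a}(k)$; the Dirichlet--Jordan test then guarantees that its Fourier series converges at every point to the mean of the one-sided limits, and evaluating at $x = 0$ is precisely the identity $\sum_m \tilde{g_a}(m) = \sum_k \hat{g_a}(k)$ used above. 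Alternatively one can simply invoke a standard reference for Poisson summation for functions of bounded variation. The remaining ingredients --- the change of variables computing $\hat{g_a}$, the interchange of the finite $a$-sum with the $k$-sum, and the evaluation of the twisted Gauss sum --- are all routine.
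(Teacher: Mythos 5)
Your proof is correct. Note that the paper itself offers no argument for this lemma beyond a citation (Theorem 10.5 of Koukoulopoulos, upgraded to the bounded-variation setting via Theorem D.3 of Montgomery--Vaughan), and what you have written is essentially the standard proof underlying those references: decompose into residue classes $n = qm + a$, apply classical Poisson summation to the rescaled functions $g_a(x) = f(qx+a)$, and use the separability identity $\sum_{a \bmod q}\chi(a)e(ak/q) = \chibar(k)\tau(\chi)$, valid for \emph{all} $k$ precisely because $\chi$ is primitive --- you correctly isolate this as the only place primitivity enters. So the content you add over the paper is the self-contained justification of Poisson summation for $L^1$ functions of bounded variation with the midpoint convention, and your periodization argument does this correctly. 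The one small point worth nailing down there is the identification $\tfrac12\bigl(G_a(0^+)+G_a(0^-)\bigr) = \sum_m \tilde g_a(m)$: you need the one-sided limits to pass through the infinite sum defining the periodization $G_a$, which follows because the tail of $\sum_m g_a(x+m)$ is uniformly small (for $x$ in a bounded interval one has $|g_a(x+m)| \le \int_{I_m}|g_a| + V_{I_m}(g_a)$ with $I_m$ a unit-length interval, and these bounds are summable by $g_a \in L^1$ and finite total variation); the same estimate shows $G_a$ is defined everywhere, not merely almost everywhere. The interchange of the finite sum over $a$ with the symmetric limit in $k$ is, as you say, immediate. With that remark your argument is complete and consistent with the conditional (symmetric-limit) interpretation of the right-hand side that the paper uses later in Proposition \ref{prop:poisson}.
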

\begin{proof}
    A version of this is, for example, Theorem 10.5 of \cite{kookoobook};
    however there it is stated in a less general form. For the more general
    version, one could refer to the Poisson summation formula of
    \cite{MV}*{Theorem D.3} and insert it into the proof of
    \cite{kookoobook}*{Theorem 10.5}.
\end{proof}

We will also use the following quantitative bound for convergence of Fourier
series.
\begin{lemma}[\cite{MV}*{Theorem D.2}]\label{lemma:fourierbound}
    Suppose that $f$ is a function of bounded variation on $[0,1]$. Then for
    any $\theta$,
\[
    \abs{\tilde f(\theta) - \sum_{\abs{n} \le K}\hat f(n) e(n\theta)}
    \le \int_0^1 \min\left(\frac{1}{2}, \frac{1}{(2K + 1)\pi \sin \pi x}\right) \abs{\ud f(\theta + x)}.
\]
\end{lemma}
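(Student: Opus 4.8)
The plan is to derive this from the classical Dirichlet--kernel analysis of partial Fourier sums, tracking the tail of the kernel explicitly. Write $S_K(\theta)=\sum_{\abs{n}\le K}\hat f(n)e(n\theta)$ and recall the Dirichlet kernel $D_K(x)=\sum_{\abs{n}\le K}e(nx)=\frac{\sin((2K+1)\pi x)}{\sin\pi x}$, which is even and satisfies $\int_{-1/2}^{1/2}D_K(x)\,\ud x=1$, hence $\int_0^{1/2}D_K(x)\,\ud x=\tfrac12$. Substituting the Fourier coefficients and using $1$-periodicity of the extension of $f$ gives $S_K(\theta)=\int_0^{1/2}\bigl(f(\theta+x)+f(\theta-x)\bigr)D_K(x)\,\ud x$, and subtracting $\tilde f(\theta)=\int_0^{1/2}\bigl(f(\theta^{+})+f(\theta^{-})\bigr)D_K(x)\,\ud x$ (valid since $2\tilde f(\theta)\int_0^{1/2}D_K(x)\,\ud x=\tilde f(\theta)$) yields
\[
    S_K(\theta)-\tilde f(\theta)=\int_0^{1/2}\Bigl(\bigl(f(\theta+x)-f(\theta^{+})\bigr)+\bigl(f(\theta-x)-f(\theta^{-})\bigr)\Bigr)D_K(x)\,\ud x .
\]

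Second, I would use bounded variation to transfer the kernel onto $\ud f$. On $(0,\tfrac12]$ the functions $h_{\pm}(x):=f(\theta\pm x)-f(\theta^{\pm})$ are of bounded variation with $h_{\pm}(0^{+})=0$, so $h_{+}(x)=\int_{(0,x]}\ud f(\theta+u)$, and similarly for $h_{-}$. Since all integrals converge absolutely against the total variation measure, Fubini's theorem converts $\int_0^{1/2}h_{+}(x)D_K(x)\,\ud x$ into $\int_{(0,1/2]}R_K(t)\,\ud f(\theta+t)$, where $R_K(t):=\int_t^{1/2}D_K(x)\,\ud x$ is the tail of the kernel; the $h_{-}$--term is treated identically. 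Taking absolute values,
\[
    \abs{S_K(\theta)-\tilde f(\theta)}\le \int_{(0,1/2]}\abs{R_K(t)}\,\bigl(\abs{\ud f(\theta+t)}+\abs{\ud f(\theta-t)}\bigr),
\]
and reparametrising the second contribution by $t\mapsto 1-t$ (using $1$-periodicity and $\sin\pi t=\sin\pi(1-t)$) turns the right-hand side into $\int_0^1\Psi_K(t)\,\abs{\ud f(\theta+t)}$ with $\Psi_K(t)=\abs{R_K(t)}$ for $t\le\tfrac12$ and $\Psi_K(t)=\abs{R_K(1-t)}$ for $t\ge\tfrac12$.

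It then remains to prove the pointwise estimate $\abs{R_K(t)}\le\min\bigl(\tfrac12,\tfrac{1}{(2K+1)\pi\sin\pi t}\bigr)$ for $t\in(0,\tfrac12]$, which immediately gives $\Psi_K(t)\le\min\bigl(\tfrac12,\tfrac{1}{(2K+1)\pi\sin\pi t}\bigr)$ on $(0,1)$ and hence the lemma. For the constant $\tfrac12$, I would write $R_K(t)=\tfrac12-\int_0^{t}D_K(x)\,\ud x$ and invoke the classical fact $0\le\int_0^{t}D_K(x)\,\ud x\le1$ on $[0,\tfrac12]$: positivity is the Fej\'er--Jackson inequality, and the upper bound is a standard Gibbs--overshoot estimate. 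For the decaying bound, I would integrate $R_K(t)=\int_t^{1/2}\frac{\sin((2K+1)\pi x)}{\sin\pi x}\,\ud x$ by parts; as $2K+1$ is odd, $\cos((2K+1)\pi/2)=0$, so the upper boundary term vanishes and
\[
    R_K(t)=\frac{\cos((2K+1)\pi t)}{(2K+1)\pi\sin\pi t}-\frac{1}{2K+1}\int_t^{1/2}\frac{\cos((2K+1)\pi x)\cos\pi x}{\sin^2\pi x}\,\ud x,
\]
whose leading term is at most $\tfrac{1}{(2K+1)\pi\sin\pi t}$ in modulus; the remaining, lower-order integral is controlled using the monotonicity of $x\mapsto\frac{\cos\pi x}{\sin^2\pi x}$ on $(0,\tfrac12)$ (second mean value theorem, or a further integration by parts), merged with the $\tfrac12$--bound in the range where $\sin\pi t$ is tiny. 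This is exactly the estimate recorded as \cite{MV}*{Theorem D.2}.

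The only genuinely delicate point is obtaining the \emph{sharp} constants in this last pointwise estimate of $R_K$ — namely exactly $\tfrac12$ and exactly $\tfrac{1}{(2K+1)\pi\sin\pi t}$, uniformly in $K$, with no spurious numerical factors. The integration by parts above exhibits the correct main term, but the correction terms must be tracked carefully (and combined with the elementary bound $\abs{R_K}\le\tfrac12$ near the endpoints) to avoid losing a constant; everything else in the argument is routine. Since the statement is quoted verbatim from \cite{MV}*{Theorem D.2}, one may alternatively simply cite it and skip the bookkeeping.
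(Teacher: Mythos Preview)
The paper does not prove this lemma at all; it is stated with the citation \cite{MV}*{Theorem D.2} and used as a black box. So there is no argument in the paper to compare your sketch against.

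Your sketch is the standard Dirichlet-kernel derivation and is structurally sound: write $S_K(\theta)-\tilde f(\theta)$ as an integral against $D_K$, use Fubini to transfer the kernel onto $\ud f$ via the tail $R_K(t)=\int_t^{1/2}D_K$, then bound $R_K$ pointwise. The one place where your outline does not quite close is the sharp constant in the decaying bound. Your integration by parts correctly isolates the main term $\cos((2K+1)\pi t)/((2K+1)\pi\sin\pi t)$, but bounding the remainder integral either crudely or by the second mean value theorem yields an extra contribution of order $\cos\pi t/((2K+1)^2\pi\sin^2\pi t)$, so the combined estimate is $\le \tfrac{2}{(2K+1)\pi\sin\pi t}$ rather than the stated $\tfrac{1}{(2K+1)\pi\sin\pi t}$. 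Your phrase ``merged with the $\tfrac12$-bound'' does not by itself recover the constant $1$, because the lemma asserts the pointwise minimum of the two bounds, not a hybrid. You correctly flag this as the only delicate point, and since the paper simply cites \cite{MV} here, following your own closing suggestion and quoting the reference is entirely appropriate.
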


Equipped with these lemmas we are in a position to state and prove our general
formula for incomplete shifted mixed character sums.

\begin{proposition}\label{prop:poisson}
    For a primitive Dirichlet character $\chi$ mod $q$ and for $\alpha,\beta \in \R$,
    we have
\begin{multline*}
    F_{k,\chi,\alpha,\beta}(t)
    = e(\alpha t)\frac{\tau(\chi)}{2\pi i q^{1/2}}\sum_{\abs{l} < K}
    \frac{e(\alpha l) (e(\beta(l + t)) - 1)}
    {l + t }
    \chibar(k-l)
    \\
    +
    O\left(\frac{q^{1/2} \log(q)}{K} + \frac{1}{q^{1/2}}\min\left(1, \frac{1}{K \frcmod{\alpha q}}\right)
        + \frac{1}{q^{1/2}}\min\left(1, \frac{1}{K\frcmod{(\alpha + \beta)q}}\right)
    + \frac{\log K}{Kq^{1/2}}
    \right).
\end{multline*}
    In particular, for any $\alpha$ and $\beta$ such that $\alpha q$ and
    $(\alpha + \beta)q$ are not integers, we have
    \begin{equation}\label{elegant_formula}
    F_{k,\chi,\alpha,\beta}(t) =
    e(\alpha t)\frac{\tau(\chi)}{2\pi i q^{1/2}}
    \lim_{K \rightarrow \infty}
    \sum_{\abs{l} < K}
    \frac{e(\alpha l) (e(\beta(l + t)) - 1)}
    {l + t }
    \chibar(k-l).
\end{equation}
\end{proposition}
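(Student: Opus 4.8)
\textbf{Proof plan for Proposition \ref{prop:poisson}.}

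The plan is to apply the twisted Poisson summation formula (Lemma \ref{lemma:poisson}) to a suitable truncation of the function $g(x) = e(x(k+t)/q) \mathds{1}_{[\alpha q, (\alpha+\beta)q]}(x)$, recognizing that $F_{k,\chi,\alpha,\beta}(t)$ is essentially $\frac{e(-\alpha k)}{q^{1/2}} \sum_n \chi(n) g(n)$, and then computing both sides explicitly. First I would record that $\tilde g(n) = g(n)$ at points of continuity, with the usual average-of-limits value at the two endpoints $\alpha q$ and $(\alpha+\beta)q$ should they be integers; the endpoint corrections contribute a term of size $O(q^{-1/2})$ which is absorbed into the displayed error. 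The left-hand side of Lemma \ref{lemma:poisson} is then $q^{1/2} e(\alpha k) F_{k,\chi,\alpha,\beta}(t)$ up to these endpoint issues.

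Next I would compute the Fourier transform $\hat g(k/q)$. Since $g$ is a pure exponential $e(x(k+t)/q)$ cut off to the interval $[\alpha q, (\alpha+\beta)q]$, the transform $\hat g(\xi) = \int_{\alpha q}^{(\alpha+\beta)q} e(x((k+t)/q - \xi)) \, \dx$ is an elementary integral of an exponential, giving a closed form of the shape $\frac{e(\cdots) - e(\cdots)}{2\pi i(\cdots)}$. Evaluating at $\xi = l/q$ and substituting $\xi \mapsto l/q$, a short computation turns the right-hand side $\frac{1}{q}\tau(\chi) \sum_l \hat g(l/q)\chibar(l)$ into $\frac{\tau(\chi)}{2\pi i q^{1/2}} e(\alpha k) e(\alpha t)\sum_l \frac{e(\alpha(l-k))(e(\beta(l-k+t)) - 1)}{l - k + t}\chibar(l)$; reindexing $l \mapsto k - l$ (using that $\chi$ has period $q$ so $\chibar(k-l)$ only depends on $k-l \bmod q$) and cancelling the factor $e(\alpha k)$ from both sides produces exactly the main term of the proposition, but with the sum over all $l \in \Z$ rather than $\abs{l} < K$.

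The remaining, and main, technical step is to justify the truncation: replacing the conditionally convergent full series by the partial sum $\sum_{\abs{l}<K}$ at the cost of the stated error. Here I would not apply Lemma \ref{lemma:poisson} to $g$ directly but instead pass through the Fourier-series bound of Lemma \ref{lemma:fourierbound} applied to the $1$-periodization of $g$ (equivalently, estimate the tail $\sum_{\abs{l}\ge K}\hat g(l/q)\chibar(l)$). The tail of $\hat g(l/q)$ decays like $1/\abs{l}$ from the denominator $l - k + t$, but only conditionally, so one must exploit cancellation. Splitting the numerator $e(\alpha(l-k))(e(\beta(l-k+t))-1)$ into its constituent exponentials, each piece is (a shift of) a truncated geometric-type sum $\sum_{\abs{l}\ge K} \frac{e(\gamma l)}{l-k+t}\chibar(l)$; partial summation against $\sum \chibar(l) e(\gamma l)$ gives the main error $\frac{q^{1/2}\log q}{K}$ (the $q^{1/2}\log q$ coming from the trivial bound on incomplete character sums twisted by an additive character, via P\'olya--Vinogradov, and the $1/K$ from the monotone weight), while the boundary terms of the partial summation, evaluated at the two breakpoints $x \in \{\alpha q, (\alpha+\beta)q\}$ where the geometric phase is $e(\alpha q \cdot)$ resp. $e((\alpha+\beta)q\cdot)$, produce the two $\min(1, 1/(K\frcmod{\cdot}))$ terms — these are precisely the non-integrality obstructions, since when $\alpha q$ or $(\alpha+\beta) q$ is close to an integer the relevant geometric sum over $\abs{l} \ge K$ is not small. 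The leftover $\frac{\log K}{K q^{1/2}}$ is a lower-order artifact of the same partial summations. I expect this truncation estimate — in particular getting the dependence on $\frcmod{\alpha q}$ and $\frcmod{(\alpha+\beta)q}$ clean and uniform in $t$ — to be the main obstacle; everything else is bookkeeping. Finally, the displayed identity \eqref{elegant_formula} is immediate: when $\alpha q, (\alpha+\beta)q \notin \Z$ the three $\min$-terms and the $\log K/(Kq^{1/2})$ term all tend to $0$ as $K \to \infty$, leaving the limit of the partial sums equal to $F_{k,\chi,\alpha,\beta}(t)$.
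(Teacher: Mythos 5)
Your first half coincides with the paper's own argument: twisted Poisson summation (Lemma \ref{lemma:poisson}) applied to $e(x\theta)$ cut off to $[\alpha q,(\alpha+\beta)q]$, the elementary evaluation of $\hat f$, and the reindexing $m=k-l$ give exactly the full-series identity, and the endpoint corrections (only present when $\alpha q$ or $(\alpha+\beta)q$ is an integer) are indeed $O(q^{-1/2})$ and absorbable into the $\min$-terms. The divergence is in the truncation step, and there your sketch has a genuine gap.

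You propose to split the numerator and bound each one-sided tail $\sum_{l>K}\frac{e(\gamma l)}{l+t}\chibar(k-l)$, $\gamma\in\{\alpha,\alpha+\beta\}$, by partial summation against $A(L)=\sum_{K<l\le L}\chibar(k-l)e(\gamma l)$ with P\'olya--Vinogradov. That does give the $q^{1/2}\log q/K$ term, but it cannot recover the stated resonance terms $\frac{1}{q^{1/2}}\min\bigl(1,\frac{1}{K\frcmod{\alpha q}}\bigr)$, which are uniformly $\le q^{-1/2}$ however small $\frcmod{\alpha q}$ is. The sequence $\chibar(k-l)e(\gamma l)$ has period-$q$ block sums of size up to $q^{1/2}\log q$ repeating with geometric ratio $e(\gamma q)$, so when $\frcmod{\gamma q}$ is tiny the one-sided tail is genuinely of size about $\frac{\log q}{q^{1/2}}\log\frac{1}{\frcmod{\gamma q}}$; for, say, Liouville $\alpha$ this exceeds any power of $q$ along a subsequence, even with $K=q$, whereas the proposition (and its use in Proposition \ref{proposition:truncation}, where the error must be $O(\log q/q^{1/2})$ uniformly) requires the uniform bound. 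The point is that the stated estimate is a statement about the \emph{symmetric} partial sums $\sum_{\abs{l}<K}$: the log-divergent resonant contributions from $l>0$ and $l<0$ cancel, and your one-sided absolute-value estimates throw away exactly this cancellation. (Your bounds do suffice for the qualitative limit \eqref{elegant_formula}, where $\frcmod{\alpha q},\frcmod{(\alpha+\beta)q}>0$ are fixed and $K\to\infty$.) Relatedly, your description of the $\min$-terms as ``boundary terms of the partial summation at the breakpoints $x\in\{\alpha q,(\alpha+\beta)q\}$'' does not parse — the $l$-sum has no such breakpoints; the correct mechanism is the period-to-period resonance you only mention parenthetically. The paper avoids all of this with the device you gesture at but attach to the wrong object: rather than periodizing $g$ in $x$, it observes that the full series is the Fourier series \emph{in the shift variable} $\alpha$ of $g(\alpha)=\frac{2\pi i\,e(-\alpha t)}{\tau(\chi)}\sum_{\alpha q\le n\le(\alpha+\beta)q}\chi(n)e(n\theta)$, and applies Lemma \ref{lemma:fourierbound} at the point $\alpha$; the symmetric truncation error is then controlled by the variation of $g$, namely $2q$ jumps of size $O(q^{-1/2})$ at the points $j/q-\alpha$ and $j/q-(\alpha+\beta)$ plus a small smooth part, the nearest jumps producing precisely the two $\min$-terms with the $q^{-1/2}$ prefactor. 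To salvage your direct route you would have to treat the resonant parts of the two tails jointly and symmetrically in $\pm l$, not one tail at a time by absolute values.
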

Note that for $l=0$ and $t=0$ the summand becomes $2\pi i \beta$; for
$l=-1$ and $t=1$ it becomes $2 \pi i \beta e(-\alpha)$.
\begin{proof}
    We apply the Poisson summation formula (Lemma \ref{lemma:poisson})
    to the function
    \[
        f(x) = e(x\theta) \rect((x-\alpha q)/(\beta q)-1/2).
    \]
    We find that
    \[
        \sideset{}{'}\sum_{\alpha q \le n \le \alpha q + \beta q}\chi(n)e(n\theta)
         = e(\alpha q\theta) \frac{\tau(\chi)}{2\pi i} \sum_{m \in \Z}
        \frac{e(-\alpha m) (e(\beta(\theta q - m)) - 1)}
        {\theta q - m}
        \chibar(m),
    \]
    where $\sum'$ indicates that if $\alpha q$ or $(\alpha+\beta)q$ is an integer, then it is counted with a $\frac{1}{2}$-weight. 
Now write $\theta = (k + t)/q$, with $k = \floor{q\theta}$ and $t = \frc{\theta}$.
Then
\[
\begin{split}
    e(-\alpha q \theta)
    \sideset{}{'}\sum_{\alpha q \le n \le \alpha q + \beta q} \chi(n)e(n\theta)
    =& \frac{\tau(\chi)}{2\pi i} \sum_{m \in \Z}
    \frac{e(-\alpha m) (e(\beta(k + t - m)) - 1)}
    {k + t - m}
    \chibar(m) \\
    =& \frac{\tau(\chi)}{2\pi i} \sum_{l \in \Z}
    \frac{e(-\alpha (k-l)) (e(\beta(l + t)) - 1)}
    {l + t }
    \chibar(k-l).
\end{split}
\]
Hence
\[
    \frac{2 \pi i e(-\alpha\frc{q\theta})}{\tau(\chi)}
    \sideset{}{'}\sum_{\alpha q \le n \le \alpha q + \beta q} \chi(n)e(n\theta)
    = \sum_{l \in \Z}
    \frac{e(\alpha l) (e(\beta(l + t)) - 1)}
    {l + t }
    \chibar(k-l).
\]
Let $g(\alpha)$ denote the left hand side, which, for the moment, we think of
as a function of $\alpha$. This is periodic on $[0,1]$ and the right hand side
is a Fourier series. Then
\begin{multline*}
    \int_0^1 \hspace{-5pt}\min\hspace{-2pt}\bigg(\frac{1}{2}, \frac{1}{(2K + 1)\pi \sin \pi x}\bigg) |\ud g(\alpha + x)|
    \le
    \frac{\frc{q\theta}}{q^{1/2}}\Bigg[\int_0^1 \hspace{-5pt}\min\hspace{-2pt}\left(\frac{1}{2}, \frac{1}{(2K + 1)\pi \sin \pi x}\right) \dx \\
    + \sum_{j=0}^{q-1}\min\left(\frac{1}{2}, \frac{1}{(2K + 1)\pi \sin \pi x_j}\right) 
+ \sum_{j=0}^{q-1}\min\left(\frac{1}{2}, \frac{1}{(2K + 1)\pi \sin \pi y_j}\right)\Bigg],
\end{multline*}
where
\[x_j = \frac{j}{q} - \alpha \ \ \ \textrm{ and }\ \ \ 
y_j = \frac{j}{q} - (\alpha + \beta).\]
We can bound the latter two sums by
\[
    O\left(\frac{q \log(q)}{K} + \min\left(1, \frac{1}{\frcmod{\alpha q}}\right)
    + \min\left(1, \frac{1}{\frcmod{(\alpha + \beta)q}}\right)
    \right)
\]
and the integral by $O((\log K)/K)$, so that Lemma \ref{lemma:fourierbound}
gives us the quantitative bound claimed.
\end{proof}

\begin{remark}
    The novelty of this proposition is the use of the Poisson summation
    formula to treat {\it all} values of $\beta.$ Interestingly, the identity \eqref{elegant_formula} with $\alpha=0$ and
    $\beta=1$ is implicit in the work of Montgomery which studied the
    maximum of the complete sum for quadratic characters. In this case, it is
    \cite{montgomery-fekete}*{Lemma 1, Page 376} combined with an elegant
    observation about the cotangent sum that Montgomery makes on the next page
    and the trigonometric identity
    \begin{equation*}
        e((k + t)/2)(\sin \pi (k+t)) = \frac{1}{2i}(e(t) - 1).
    \end{equation*}
    This leads to a simpler formula
    \[
        S\left(\chi, 0, 1, \frac{k+t}{q}\right) =
        \frac{\tau(\chi)}{2\pi i}(e(t)-1)
        \lim_{N \rightarrow \infty}\sum_{n=-N}^N \frac{\chibar(k + n)}{t - n}.
    \]
    Variants of this identity, again for complete sums ($\beta=1$) also occurred in \cite{CGPS-fekete},
    \cite{KLM-fekete} and \cite{mossinghoff-mahler}; however, the authors did not seem to notice Montgomery's
    cotangent sum identity, and instead used properties of Gauss sums to deduce various approximations.

\end{remark}

We will first prove that we can truncate the infinite sum over $l$ when
we vary either the character $\chi$ or the shift $k$. We choose some slowly growing function $L(q)$
and for each $k$ and $\chi$, we define the function
\[
    \tilde F_{k,\chi,\alpha,\beta}(t)
    = e(\alpha t)\frac{\tau(\chi)}{2\pi i q^{1/2}}\sum_{\abs{l} < L(q)}
    \frac{e(\alpha l) (e(\beta(l + t)) - 1)}
    {l + t}
    \chibar(k-l).
\]
For each $q$ we consider the random process $\tilde F_{k,q,\alpha,\beta}$
which comes from choosing a character $\chi$ mod $q$ uniformly at random,
and the random process $\tilde G_{q, \alpha, \beta}$ that comes from
fixing the quadratic character mod $q$ and choosing $k$ uniformly at random,
again adjusting $\tilde G_{q,\alpha,\beta}$ to remove the contribution of the
Gauss sum.
In most of what follows, the only important properties of $L(q)$ are
that it tends to infinity and that it does so more slowly than $q^{\varepsilon}$ for
any fixed $\varepsilon$. We may think of $L(q) = (\log q)^A$ for some fixed $A$,
for example; however for some of our applications it will be necessary to take
$L(q)$ growing arbitrarily slowly.
\begin{proposition}\label{proposition:truncation}
    Let $L(q)$ be some function such that $L(q) \rightarrow \infty$ and
    $L(q) \ll q^\varepsilon$ for all $\varepsilon > 0$.
    For fixed real numbers $\alpha$ and $\beta$, any bounded Lipschitz function $h:\mathscr{C}[0,1] \rightarrow \R$, and
    any sequence of integers $k_q$, as $q$ tends to infinity over the primes we have 
    %the expected value $\E h(F_{k_q,q,\alpha,\beta})$
    %approaches $\E h(\tilde F_{k_q,q,\alpha,\beta})$ as $q$ tends to infinity over the
    %primes, in the sense that
    \[
        \abs{\E h(F_{k_q,q,\alpha,\beta}) - \E h(\tilde F_{k_q,q,\alpha,\beta})} \rightarrow 0 \text{ and } \abs{\E h(G_{q,\alpha,\beta}) - \E h(\tilde G_{q,\alpha,\beta})} \rightarrow 0.
    \]
    That is, $F_{k_q,q,\alpha,\beta}$ converges weakly to some $F$ (as $q$ tends
    to infinity over the primes, or over some subsequence of the primes) if and
    only if $\tilde F_{k_q,q,\alpha,\beta}$ converges weakly to $F$, and similarly for $G_{q, \alpha, \beta}$ and $\tilde G_{q, \alpha, \beta}$. 

    %Similarly, the expected value $\E h(G_{q,\alpha,\beta})$ approaches
   % $\E h(\tilde G_{q,\alpha,\beta})$.
\end{proposition}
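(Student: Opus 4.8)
The plan is to show that $F_{k_q,q,\alpha,\beta}(t) - \tilde F_{k_q,q,\alpha,\beta}(t)$ is, with high probability over the random choice (of $\chi$ or of $k$), uniformly small in the sup norm on $[0,1]$, and then to deduce the statement about $\E h(\cdot)$ from the Lipschitz property of $h$. Concretely, if $h$ is bounded by $M$ and Lipschitz with constant $\kappa$ with respect to the sup norm, then for any $\delta>0$
\[
    \abs{\E h(F_{k_q,q,\alpha,\beta}) - \E h(\tilde F_{k_q,q,\alpha,\beta})}
    \le \kappa\,\delta + 2M\cdot \prob\!\left(\Cnorm{F_{k_q,q,\alpha,\beta} - \tilde F_{k_q,q,\alpha,\beta}}_\infty > \delta\right),
\]
so it suffices to show that for every fixed $\delta>0$ the probability on the right tends to $0$ as $q\to\infty$ over the primes. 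The final clause of the proposition — the ``if and only if'' for weak convergence — is then immediate, since weak convergence of $\mathscr C[0,1]$-valued random variables is tested exactly against bounded Lipschitz $h$.

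First I would pin down the tail $R_{k,\chi,\alpha,\beta}(t) := F_{k,\chi,\alpha,\beta}(t) - \tilde F_{k,\chi,\alpha,\beta}(t)$. There are two contributions. One is the Poisson error term from Proposition \ref{prop:poisson}: taking $K = L(q)$ there, and using that $\alpha$, $\beta$ are \emph{fixed} irrationals-or-otherwise so that $\frcmod{\alpha q}$ and $\frcmod{(\alpha+\beta)q}$ are bounded below along a density-one set (and in any case the relevant $\min(1,\cdot)$ terms are $O(1)$), the error is $O\!\big(q^{1/2}\log q / L(q)\big)$; this is deterministic and, once we divide through — note that $F_{k,\chi,\alpha,\beta}$ already carries the $q^{-1/2}$ normalization and $\abs{\tau(\chi)} = q^{1/2}$ for primitive $\chi$ — it contributes $O(\log q / L(q)) = o(1)$ uniformly in $t$, hence deterministically negligible. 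Wait — one must be slightly careful: Proposition \ref{prop:poisson} is the identity for $F_{k,\chi,\alpha,\beta}$ with the truncated sum plus that error, so in fact $R_{k,\chi,\alpha,\beta}(t)$ \emph{equals} exactly the difference between the $\abs{l}<K$ sum and the $\abs{l}<L(q)$ sum plus the Poisson error; choosing $K$ a larger slowly-growing function (or simply using \eqref{elegant_formula} and writing the honest tail $\sum_{\abs{l}\ge L(q)}$) the genuinely stochastic part to control is
\[
    e(\alpha t)\frac{\tau(\chi)}{2\pi i\,q^{1/2}}
    \sum_{L(q) \le \abs{l}} \frac{e(\alpha l)\,(e(\beta(l+t))-1)}{l+t}\,\chibar(k-l).
\]

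Next I would bound the sup norm of this tail in mean square. For the random-character process, $\E_\chi \chibar(k-l_1)\chi(k-l_2)$ equals $1$ when $l_1 = l_2$ (and $k-l_1 \not\equiv 0$) and is $O(1/\phi(q))$ otherwise, so a second-moment computation gives, for each fixed $t$,
\[
    \E_\chi \abs{R_{k,\chi,\alpha,\beta}(t)}^2 \ll \sum_{\abs{l}\ge L(q)} \frac{\abs{e(\beta(l+t))-1}^2}{(l+t)^2} \ll \frac{1}{L(q)} = o(1),
\]
since $\abs{e(\beta(l+t))-1}\le 2$ and $\sum_{\abs{l}\ge L(q)}(l+t)^{-2} \ll 1/L(q)$; the off-diagonal terms contribute $O(L(q)^2/\phi(q)) = o(1)$ by the slow growth of $L$. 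For the quadratic process $\tilde G_{q,\alpha,\beta}$, averaging over $k$ replaces $\E_\chi$ by $\frac1q\sum_k$, and the orthogonality $\frac1q\sum_k \chi_q(k-l_1)\chi_q(k-l_2) = \mathds 1_{l_1=l_2} + O(q^{-1/2})$ (a Weil bound for the relevant incomplete sum, or just exact evaluation since $\chi_q$ is quadratic and we may complete the sum) gives the same conclusion. To upgrade pointwise control to the sup norm I would use the tightness/equicontinuity input: the tail $R$ is built from the same kind of series as $F$ itself, so Lemma \ref{lemma:tightness} (the Kolmogorov–Chentsov bound \eqref{kolmogorov-condition}) applies to it verbatim, and equicontinuity in mean plus pointwise-small-in-mean on a fine net forces $\E \Cnorm{R}_\infty^A \to 0$; then Markov's inequality gives $\prob(\Cnorm{R}_\infty > \delta)\to 0$ as required.

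The main obstacle I anticipate is not any single estimate but the bookkeeping around the two sources of error and the role of $L(q)$: one must choose the Poisson truncation parameter $K$ in Proposition \ref{prop:poisson} compatibly with the truncation length $L(q)$ appearing in $\tilde F$, make sure the deterministic $O(q^{1/2}\log q/K)$ term is genuinely $o(q^{1/2})$ (so that after normalization it vanishes) while the off-diagonal stochastic terms, which grow like $L(q)^2/q$, also vanish — this is exactly why the hypothesis $L(q)\ll q^\varepsilon$ for all $\varepsilon$ is imposed, and the proof should make that dependence explicit. A secondary point of care is the handful of terms where $k-l\equiv 0 \pmod q$, i.e. where $\chibar(k-l)=0$; these are at most $O(L(q)/q \cdot q) = O(1)$ many across the whole range and contribute a negligible deterministic correction, but they must be acknowledged so that the orthogonality relations are applied correctly.
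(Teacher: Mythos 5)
Your overall frame (show $\Cnorm{F_{k_q,q,\alpha,\beta}-\tilde F_{k_q,q,\alpha,\beta}}$ is small outside an event of vanishing probability, then split $\E h(F)-\E h(\tilde F)$ using the Lipschitz and boundedness of $h$, and note that bounded Lipschitz test functions characterize weak convergence) is exactly the paper's. But two steps in your execution are genuinely problematic. First, the truncation bookkeeping: the error term in Proposition \ref{prop:poisson} is $O(q^{1/2}\log q/K+\cdots)$ for the \emph{already normalized} $F_{k,\chi,\alpha,\beta}$ --- there is no further $q^{-1/2}$ to ``divide through'' --- so with $K=L(q)$, or with ``a larger slowly-growing function'' as in your self-correction, this error is enormous, not $o(1)$. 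The cure is to take the intermediate truncation at $\abs{n}<q$ (i.e.\ $K\asymp q$), for which the Poisson error is $O(\log q/q^{1/2})$ for every nontrivial $\chi$; the stochastic object is then the finite-range tail $L(q)<\abs{n}<q$, not the infinite tail. Relatedly, your orthogonality statement is off: averaging over all characters mod a prime $q$ gives \emph{exact} orthogonality with diagonal condition $l_1\equiv l_2\pmod q$, which on a range of length $2q$ (or on the infinite tail) includes pairs with $l_1\ne l_2$; this is why the paper splits the tail into positive and negative $n$ before squaring, and why working with the honest infinite tail would require extra care with congruent pairs (and with the mere conditional convergence of the series). You also need to discard the principal character (Proposition \ref{prop:poisson} requires $\chi$ primitive), which contributes the $O(B/q)$ term.

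Second, your route from pointwise mean-square smallness to the sup norm --- ``Lemma \ref{lemma:tightness} applies verbatim to the tail, and equicontinuity in mean plus a fine net forces $\E\Cnorm{R}_\infty^A\to 0$'' --- is only a gesture: with second-moment increment bounds one needs a quantitative Kolmogorov--Chentsov/chaining (or Garsia--Rodemich--Rumsey) maximal inequality to control the oscillation between net points, which is standard but is neither proved nor precisely cited, and is not in the paper. The paper avoids this machinery entirely with an algebraic observation: since $e(\alpha n)(e(\beta(n+t))-1)=e((\alpha+\beta)n)e(\beta t)-e(\alpha n)$ and $1/(t+n)=1/n+O(1/n^2)$ for $\abs{n}>L(q)$, the supremum over $t$ of the tail is bounded by two $t$-\emph{independent} random sums plus an absolutely convergent $O(1/L(q))$ term; a single orthogonality computation then bounds $\E\sup_t\abs{F-\tilde F}^2\ll 1/L(q)$, and Markov with $z=L(q)^{-1/3}$ finishes. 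Your quadratic-case treatment (exact evaluation of $\sum_k\chi_q(k-n)\chi_q(k-m)$) matches the paper. So the strategy is right, but as written the proof would fail at the choice of $K$ and is incomplete at the sup-norm upgrade; both repairs are available (take $K\asymp q$, and either do the chaining properly or use the factoring trick above).
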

\begin{proof}
    We will abbreviate $F_{k,\chi} := F_{k,\chi,\alpha,\beta}$.
    By the triangle inequality
    \begin{multline*}
        \abs{F_{k,\chi}(t) - \tilde F_{k,\chi}(t)}^2
        \ll \abs{\tilde F_{k,\chi}(t) -
            \frac{\tau(\chi)}{2 \pi i q^{1/2}} \sum_{\abs{n} < q}
    \frac{e(\alpha n)(e(\beta(n + t) - 1)}
        {t + n}\chibar(k - n)
            }^2 \\
            +
        \abs{F_{k,\chi}(t) -
            \frac{\tau(\chi)}{2 \pi i q^{1/2}} \sum_{\abs{n} < q}
    \frac{e(\alpha n)(e(\beta(n + t) - 1)}
        {t + n}\chibar(k - n)
            }^2.
    \end{multline*}
    The second term is $O((\log q)/q^{1/2})$ by the quantitative bound of
    Proposition \ref{prop:poisson} for any nontrivial character $\chi$ mod $q$,
    so
    \[
    \abs{F_{k,\chi}(t) - \tilde F_{k,\chi}(t)}^2
        \ll
            \abs{
            \sum_{L(q) < |n| < q}
            \frac{e(\alpha n)(e(\beta(n + t) - 1)}
            {t + n}
            \chibar(k - n)
        }^2 + \frac{\log q}{q}.
    \]
    For $t \in [0,1]$ and $|n| > L(q)$, we have $1/(n-t) = 1/n + O(1/n^2)$,
    so again from the triangle inequality we have
    \begin{multline}\label{eq:supbound}
        \sup_{t\in[0,1]} \abs{F_{k,\chi}(t) - \tilde F_{k,\chi}(t)}^2 \ll
        \\
            \abs{
            \sum_{L(q) < |n| < q}
            \frac{e((\alpha + \beta)n)}
            {n}
            \chibar(k - n)
        }^2 +
            \abs{
            \sum_{L(q) < |n| < q}
            \frac{e(\alpha n)}
            {n}
            \chibar(k - n)
        }^2
             + \frac{1}{L(q)^{2}}.
     \end{multline}
    Now taking the average over all nontrivial characters we find that
    \begin{multline*}
        \frac{1}{q-2}\sum_{\chi \ne \chi_0} \sup_{t\in[0,1]} \abs{F_{k,\chi}(t) - \tilde F_{k,\chi}(t)}^2 \ll \\
        \frac{1}{q-2}
        \sum_{\chi \mod q}
            \abs{
            \sum_{L(q) < |n| < q}
            \frac{e((\alpha + \beta)n)}
            {n}
            \chibar(k - n)
        }^2 + \\
        \frac{1}{q-2}
        \sum_{\chi \mod q}
            \abs{
            \sum_{L(q) < |n| < q}
            \frac{e(\alpha n)}
            {n}
            \chibar(k - n)
        }^2
        + \frac{1}{L(q)^{2}}
        \\
        \ll 
        \frac{q-1}{q-2}\sum_{|n| > L(q)} \frac{1}{n^2} + \frac{1}{L(q)^2}
        \ll
        \frac{1}{L(q)}.
    \end{multline*}
    Upon applying Markov's inequality it follows that the number of primitive $\chi$ such that
    $\|F_{k, \chi,\alpha,\beta} - \tilde F_{k, \chi,\alpha,\beta}\| > z$ is bounded by
    $O\left(\frac{q}{z^2 L(q)}\right)$.

    We now suppose that $h:\mathscr C[0,1] \rightarrow \R$ is a bounded Lipschitz function with \linebreak
    $\abs{h(x_1) - h(x_2)} \le K\|x_1 - x_2\|$ and $|h(x)| \le B$ for all $x_1$ and
    $x_2$. Then for any $z$ we have
    \[
    \begin{split}
        \E h(F_{k,q,\alpha,\beta}) - \E h(\tilde F_{k,q,\alpha,\beta}) &= 
        \E \left[h(F_{k, q,\alpha,\beta}) - h(\tilde F_{k, q,\alpha,\beta})\right] \\
        &\le
        Kz + B\prob\left[\Cnorm{F_{k, \chi,\alpha,\beta} - \tilde F_{k, \chi,\alpha,\beta}} > z\right] \\
        &\le Kz + O\left(\frac{B}{z^2 L(q)} + \frac{B}{q}\right)
    \end{split}
    \]
    (the term $B/q$ arises from the contribution of the trivial character).
    Choosing $z = L(q)^{-1/3}$ yields desired conclusion.

    For the quadratic case, we start with \eqref{eq:supbound} and average over
    $k$.
    Notice that
    \begin{multline*}
        \frac{1}{q}
        \sum_{k=0}^{q-1}
            \abs{
            \sum_{L(q) < |n| < q}
            \frac{e(xn)}
            {n}
            \chibar(k - n)
        }^2
        \\
        \ll
        \frac{1}{q}
        \sum_{k=0}^{q-1}
            \abs{
            \sum_{L(q) < n < q}
            \frac{e(xn)}
            {n}
            \chibar(k - n)
        }^2
        +
        \frac{1}{q}
        \sum_{k=0}^{q-1}
            \abs{
            \sum_{L(q) < -n < q}
            \frac{e(xn)}
            {n}
            \chibar(k - n)
        }^2
    \end{multline*}
    and
    \[
        \frac{1}{q}
        \sum_{k=0}^{q-1}
            \abs{
            \sum_{L(q) < \epsilon n < q}
            \frac{e(xn)}
            {n}
            \chibar(k - n)
        }^2
        =
        \frac{1}{q} \hspace{.3em}
        \sum\sum_{\hspace{-2.2em}\substack{L(q) < \epsilon m < q \\ L(q) < \epsilon n < q } }
            \frac{e(xn - xm)}
            {nm}
            \sum_{k=0}^{q-1}
            \chibar(k - n)
            \chi(k - m)
    \]
    for each choice of $\epsilon = \pm 1$.
    When $\chi$ is the quadratic character, the inner sum is $-1$ unless $n = m$,
    in which case it is $q-1$. Thus
    \[
    \begin{split}
        \frac{1}{q}
        \sum_{k=0}^{q-1}
            \abs{
            \sum_{L(q) < \epsilon n < q}
            \frac{e(xn)}
            {n}
            \chibar(k - n)
        }^2
        &=
        \frac{q-1}{q} \sum_{L(q) < \epsilon n < q} \frac{1}{n^2} -
            \frac{1}{q} \sum_{\substack{L(q) < \epsilon n < q \\ L(q) < \epsilon m < q \\ n \ne m}}
            \frac{e(xn - xm)}{nm} \\
        &\ll \frac{1}{L(q)} + \frac{(\log q)^2}{q} \ll \frac{1}{L(q)}.
        \end{split}
    \]
    We therefore have exactly the same quality bounds when we average over $k$ as when we average
    over $\chi$ and the proof is now complete.
\end{proof}

\section{Tightness and continuity}
\begin{lemma}\label{lemma:tightness}
    For any fixed $\alpha$ and $\beta$,
    there is a constant $C$ such that for all $k$ and $q$ and
    for all $s,t \in [0,1]$, we have
\[
    \E \abs{\tilde F_{k,q,\alpha,\beta}(s) - \tilde F_{k,q,\alpha,\beta}(t)}^2
        \le C \abs{s-t}^2
\]
and similarly
\[
    \E \abs{\tilde G_{q,\alpha,\beta}(s) - \tilde G_{q,\alpha,\beta}(t)}^2
        \le C \abs{s-t}^2.
\]
\end{lemma}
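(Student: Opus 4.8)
The plan is to estimate $\E|\tilde F_{k,q,\alpha,\beta}(s)-\tilde F_{k,q,\alpha,\beta}(t)|^2$ by expanding the difference as a short sum over $|l|<L(q)$ and using orthogonality of the characters. Writing
\[
    \tilde F_{k,\chi,\alpha,\beta}(s)-\tilde F_{k,\chi,\alpha,\beta}(t)
    = \frac{\tau(\chi)}{2\pi i q^{1/2}}\sum_{|l|<L(q)} c_l(s,t)\,\chibar(k-l),
\]
where $c_l(s,t) = e(\alpha(l+s))\frac{e(\beta(l+s))-1}{l+s} - e(\alpha(l+t))\frac{e(\beta(l+t))-1}{l+t}$, the first step is to bound $|c_l(s,t)|$. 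The function $x\mapsto e(\alpha x)(e(\beta x)-1)/x$ is smooth (it extends holomorphically across $x=0$, with value $2\pi i\beta$ there) with derivative bounded by $O(1/(1+|l|))$ on the relevant range; hence by the mean value theorem $|c_l(s,t)| \ll \frac{|s-t|}{1+|l|}$, with the implied constant depending only on $\alpha,\beta$.

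The second step is the averaging. Since $|\tau(\chi)|^2 = q$ for primitive $\chi$ and, averaging over all $\chi \bmod q$,
\[
    \frac{1}{\varphi(q)}\sum_{\chi \bmod q}\chibar(k-l)\chi(k-l')
\]
is $1$ when $l\equiv l' \pmod q$ (and both are coprime to $q$) and $0$ otherwise, for $|l|,|l'|<L(q)\ll q^{o(1)}$ and $q$ large the only surviving diagonal is $l=l'$. (The small correction from the trivial character and from the factor $\varphi(q)$ versus $q$ contributes a negligible $O(L(q)^2/q)\cdot|s-t|^2$, which is absorbed.) Therefore
\[
    \E\bigl|\tilde F_{k,q,\alpha,\beta}(s)-\tilde F_{k,q,\alpha,\beta}(t)\bigr|^2
    \ll \sum_{|l|<L(q)} |c_l(s,t)|^2
    \ll |s-t|^2 \sum_{l\in\Z}\frac{1}{(1+|l|)^2}
    \ll |s-t|^2,
\]
uniformly in $k$ and $q$, which is the claimed bound. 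For the quadratic process $\tilde G_{q,\alpha,\beta}$, one averages over $k\in\{0,\dots,q-1\}$ instead; exactly as in the proof of Proposition \ref{proposition:truncation}, for the fixed quadratic character $\chi_q$ the inner sum $\sum_{k=0}^{q-1}\chibar_q(k-l)\chi_q(k-l')$ equals $q-1$ if $l=l'$ and $-1$ otherwise, so the off-diagonal terms contribute $O\bigl(\frac1q(\sum_{|l|<L(q)}|c_l(s,t)|)^2\bigr) \ll |s-t|^2 (\log q)^2/q$, again absorbed, and the diagonal gives the same $\sum_l |c_l(s,t)|^2 \ll |s-t|^2$.

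The only mild subtlety — the ``main obstacle,'' such as it is — is making sure the bound on $c_l(s,t)$ is genuinely uniform near $l=0$ (where the denominators $l+s$, $l+t$ can be small) and near $l=-1$, $t=1$; this is handled by noting that the combined numerator also vanishes there, so the relevant single-variable function $\phi(x)=e(\alpha x)(e(\beta x)-1)/x$ is $C^\infty$ with bounded derivative on any compact set, and $c_l(s,t)=\phi(l+s)-\phi(l+t)$ with $l+s,l+t$ ranging over an interval of length $1$ around the integer $l$. Away from $l=0$ the factor $1/(l+s)$ forces the extra decay $1/|l|$. So the estimate $|c_l(s,t)|\ll |s-t|/(1+|l|)$ holds with a constant depending only on $\alpha$ and $\beta$, and the sum over $l$ converges, completing the proof.
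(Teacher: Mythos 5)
Your proposal is correct and follows essentially the same route as the paper: expand the truncated sum, bound the Gauss-sum factor, use orthogonality over characters (respectively the complete sum $\sum_k \chi_q(k-l)\chi_q(k-l')$ in the quadratic case, with the $-1$ off-diagonal contribution absorbed), and bound the diagonal coefficients by $|s-t|/(1+|l|)$ via the mean value theorem applied to the smooth function $e(\alpha x)(e(\beta x)-1)/x$. The only cosmetic differences are that you absorb the prefactor $e(\alpha t)$ into the summand (a detail the paper's proof treats implicitly) and skip the paper's splitting of the sum by the sign of $l$, neither of which changes the argument.
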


\begin{proof}
    Write $c(n, t) = e(\alpha n)(e(\beta(n + t) - 1)$.
    From the definition of $\tilde F_{k,q,\alpha,\beta}$ we have
\begin{align*}
    \E \Big|\tilde F_{k,q,\alpha,\beta}(s)& -  \tilde F_{k,q,\alpha,\beta}(t)\Big|^2 \\
   & = \frac{1}{q-1} \sum_{\chi \mod q} 
    \abs{
        \frac{\tau(\chi)}{2\pi q^{1/2}}
        \sum_{\abs{n} < L(q)}
    \frac{c(n,s)\chibar(k - n)}{s + n} - 
    \frac{c(n,t)\chibar(k - n)}{t + n}
    }^2 \\
   & = \frac{1}{4\pi^2(q-1)} \sum_{\chi \mod q}
    \abs{
        \sum_{\abs{n} < L(q)}
        \chibar(k - n)\left(
        \frac{c(n,s)}{s + n} - 
        \frac{c(n,t)}{t + n}
        \right)
    }^2.
\end{align*}
It is now convenient to take just half the sum, so take 
$\epsilon \in \{\pm 1\}$ which maximizes the sum
\[
\sum_{\chi \mod q}
    \abs{
        \sum_{0 \le \epsilon n < L(q)^A}
        \chibar(k -n)\left(
        \frac{c(n,s)}{s + n} - 
        \frac{c(n,t)}{t + n}
        \right)
    }^2,
\]
so that
\begin{align*}
    \E \Big|\tilde F_{k,q,\alpha,\beta}(s)& -  \tilde F_{k,q,\alpha,\beta}(t)\Big|^2 \\
   & \ll \frac{1}{q} \sum_{\chi \mod q}
    \abs{
        \sum_{0 \le \epsilon n < L(q)}
        \chibar(k - n)\left(
        \frac{c(n,s)}{s + n} - 
        \frac{c(n,t)}{t + n}
        \right)
    }^2.
\end{align*}
Then opening up the sum and using orthogonality we have
\[
    \E \abs{\tilde F_{k,q,\alpha,\beta}(s) - \tilde F_{k,q,\alpha,\beta}(t)}^2
    \ll \frac{1}{\pi}
        \sum_{0 \le \epsilon n < L(q)}
        \abs{
        \frac{c(n,s)}{s + n} - 
        \frac{c(n,t)}{t + n}
    }^2
\]
as long as $q^{1/2} > L(q)$.

By exactly the same computations and again using the fact that
$\sum_{k=0}^{q-1} \chi_q(k)\chi_q(k+a) = q-1$ when $a \equiv 0$
and $-1$ otherwise, we find that
\begin{multline*}
    \E \abs{\tilde G_{q,\alpha,\beta}(s) - \tilde G_{q,\alpha,\beta}(t)}^2
    \ll 
        \sum_{0 \le \epsilon n < L(q)}
        \abs{
        \frac{c(n,s)}{s + n} - 
        \frac{c(n,t)}{t + n}
    }^2 \\
    + \frac{1}{q} 
    \sum_{\substack{0 \le \epsilon n < L(q) \\ 0 \le \epsilon m < L(q)} }
    \left(\frac{c(n,s)}{s+n} - \frac{c(n,t)}{t + n}\right)
    \left(\frac{\overline{c(m,s)}}{s+m} - \frac{\overline{c(m,t)}}{t + m}\right).
\end{multline*}

Now, the derivative of $c(n,t)/(t+n)$ is $O(1/(|n|+1))$ for $t \in [0,1]$, so
\[
    \abs{
        \frac{c(n,s)}{s + n} - 
        \frac{c(n,t)}{t + n}
    } \ll \frac{\abs{s-t}}{n+1}
\]
by the mean value theorem
and hence
\begin{multline*}
    \E \abs{ A(s) - A(t)}^2
    \ll
        \abs{s-t}^2
        \sum_{0 \le n < L(q)}
            \frac{1}{(n+1)^2}
        +
        \frac{\abs{s-t}^2}{q}\left(\sum_{0 \le n < L(q)} \frac{1}{n+1}\right)^2
        \\
    \ll
        \abs{s-t}^2,
\end{multline*}
where $A = \tilde F_{k,q,\alpha,\beta}$ or $\tilde G_{q, \alpha, \beta}$.
\end{proof}
We will now show that all our processes are almost-surely continuous.
\begin{lemma}
    For any real $\alpha$ and $\beta$ and any integer $k$,
    each of the random processes $F_{k,\alpha,\beta}(t)$,
    $F_{\alpha,\beta}(t)$ and $G_{\alpha,\beta}(t)$ is almost-surely
    continuous. 
\end{lemma}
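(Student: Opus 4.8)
The plan is to prove almost-sure continuity of each of the three limiting processes by exhibiting them as almost-sure uniform limits of their partial sums over $|l| < L$, since each partial sum is a finite sum of manifestly continuous functions of $t$. To this end, I would first record that the general term behaves, for $t \in [0,1]$ and $|l|$ large, like $\bW(k-l)/l$ (respectively $\bX(l)/l$ or $\bY(l)/l$) up to a factor bounded independently of $t$: indeed $e(\alpha l)(e(\beta(l+t))-1)$ is bounded in modulus by $2$, and $1/(l+t) = 1/l + O(1/l^2)$ uniformly in $t$. So the tail $\sum_{L \le |l| < L'}$ of each series, as a random element of $\mathscr C[0,1]$, is controlled in sup-norm by something like $\bigl|\sum_{L \le |l| < L'} e((\alpha+\beta)l)\bW(k-l)/l\bigr| + \bigl|\sum_{L \le |l| < L'} e(\alpha l)\bW(k-l)/l\bigr| + O(1/L)$, after using the mean value theorem to absorb the $t$-dependence exactly as in the proof of Lemma \ref{lemma:tightness}.

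Next I would establish almost-sure convergence of the one-variable (say $t = 0$, or more precisely a convenient value where no denominator vanishes, e.g. splitting off the finitely many problematic terms) random series $\sum_l e(\gamma l)\bW(k-l)/l$ for the relevant fixed frequency $\gamma$. Since the $\bW(k-l)$ (resp. $\bX(l)$, $\bY(l)$) are independent, mean-zero, bounded random variables and $\sum_l 1/l^2 < \infty$, Kolmogorov's three-series theorem (or the $L^2$ martingale convergence theorem applied to the symmetric partial sums) gives almost-sure convergence of the numeric series. The point then is to upgrade this to uniform-in-$t$ convergence: I would use the tightness estimate from Lemma \ref{lemma:tightness}, or rather its proof, which shows $\E\,\sup_{t}|\text{(tail from }L\text{ to }L')|^2 \ll 1/L$; combined with a dyadic decomposition $L = 2^j$ and Borel--Cantelli, this yields that the partial sums form an almost-sure Cauchy sequence in $\mathscr C[0,1]$. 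As a uniform limit of continuous functions, the limit is almost surely continuous.

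Concretely, the cleanest packaging is: let $S_L(t)$ denote the partial sum $\sum_{|l|<L}$ of the relevant series; show $\E\,\|S_{2^{j+1}} - S_{2^j}\|_{\mathscr C[0,1]}^2 \ll 2^{-j}$ by the same orthogonality computation as in Lemma \ref{lemma:tightness} (the independence of the $\bW$, $\bX$, or $\bY$ replacing the character orthogonality there); then $\sum_j \|S_{2^{j+1}} - S_{2^j}\| < \infty$ almost surely by Markov and Borel--Cantelli, so $S_{2^j}$ converges uniformly almost surely, and a further routine estimate (monotonicity of the tail bound in $L$) upgrades this to convergence of $S_L$ along all $L$. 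The prefactors $\bX$ and $e(\alpha t)$ in $F_{k,\alpha,\beta}$ are bounded continuous (a.s.\ constant in the case of $\bX$) and do not affect continuity. The same argument applies verbatim to $F_{\alpha,\beta}$ and $G_{\alpha,\beta}$.

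The main obstacle is purely the uniformity in $t$: pointwise a.s.\ convergence is immediate from classical random series theory, but one must rule out the bad event that the convergence fails to be uniform, and the non-absolute convergence of the series (the terms only decay like $1/l$) means one genuinely needs the second-moment/chaining input rather than a crude union bound. Fortunately this is exactly the content already extracted in the proof of Lemma \ref{lemma:tightness}, so the argument is short once that estimate is invoked; the only care needed is to handle the finitely many indices $l$ for which $l + t$ can vanish (namely $l = 0$ with $t \to 0$ and $l = -1$ with $t \to 1$), where, as noted after Proposition \ref{prop:poisson}, the summand extends continuously — one simply treats these terms separately and notes each is an a.s.\ continuous function on its own.
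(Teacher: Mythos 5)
Your proposal is correct in substance and rests on the same algebraic step as the paper: writing $1/(l+t)=1/l+O(1/l^2)$ (the paper, after a change of variables, writes $\frac{1}{k-l+t}=\frac{1}{-l}+\frac{k+t}{l(k-l+t)}$) so that the tail splits into two non-absolutely-convergent random series that are \emph{independent of $t$} (multiplied by the continuous factors $e(\alpha k)$ and $e((\alpha+\beta)k+\beta t)$) plus an absolutely convergent, hence automatically continuous, remainder. Where you diverge is in what you do next: the paper simply observes that once the $t$-dependence sits entirely in continuous prefactors and in the absolutely convergent piece, continuity follows as soon as the two numerical series converge a.s.\ (which it disposes of by moment bounds), with no uniformity-in-$t$ argument needed at all. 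You instead prove a.s.\ uniform convergence of the symmetric partial sums via sup-norm second moments on dyadic blocks, Markov and Borel--Cantelli, in the spirit of Lemma \ref{lemma:tightness}. Your route is heavier but buys a bit more (a.s.\ Cauchyness of the partial sums in $\mathscr C[0,1]$); the paper's is shorter because the chaining is unnecessary once the splitting is in place.

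Two points need repair, though neither sinks the argument. First, the variables $\bW(k-l)$ in $F_{k,\alpha,\beta}$ are \emph{not} independent: $\bW$ is completely multiplicative, so e.g.\ $\bW(4)=\bW(2)^2$. Hence Kolmogorov's three-series theorem and the martingale convergence theorem do not apply as stated in that case; what you actually use in the quantitative part is only the orthonormality $\E[\bW(m)\overline{\bW(n)}]=\delta_{m=n}$, which does hold and suffices for the $L^2$ bounds, so you should phrase the argument in those terms (the independent cases $\bX(l)$, $\bY(l)$ are fine as written). Second, the passage from convergence along dyadic truncations $K=2^j$ to convergence along all $K$ is not justified by ``monotonicity of the tail bound'': partial sums can fluctuate inside a dyadic block, and with $\sim 2^j$ terms of size $\sim 2^{-j}$ the trivial bound is $O(1)$. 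You need a maximal inequality over each block --- L\'evy/Doob in the independent cases, Rademacher--Menshov (which only needs orthonormality and costs a factor $j^2$, still summable after Markov) in the $\bW$ case. With these substitutions your proof is complete; note that the paper itself only sketches the a.s.\ convergence of its two $t$-free series, so the level of detail is comparable.
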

\begin{proof}
The calculations we give here for $F_{k,\alpha,\beta}$ work equally well
for $F_{\alpha,\beta}$ and $G_{\alpha,\beta}$ with simple modifications (in the latter case, those are simpler as the random variables involved are independent). We make a change of variables and write
\[
    F_{k, \alpha,\beta}(t) = \bX \frac{e(\alpha t)}{2\pi i}
    \sum_{l \in \Z}
    \frac{e(\alpha (k-l)) e(\beta(k - l + t)) - 1)}
    {k - l + t}
    \bW(l).
\]
To prove continuity, we can ignore the multipliers out front and any finite number of terms, so we stay away from the region where the
denominator is $0.$ Consider the sum
\[
    S = 
    \sum_{\substack{l \in \Z \\ |l| > 2k}}
    \frac{e(\alpha (k-l)) e(\beta(k - l + t)) - 1)}
    {k - l + t}
    \bW(l).
\]
and note
\[
    \frac{1}{k - l + t} = \frac{1}{-l} + \frac{k+t}{l(k - l + t)}.
\]
Consequently
\begin{multline*}
    S = 
    e(\alpha k)
    \sum_{\substack{l \in \Z \\ |l| > 2k}}
    \frac{e(-\alpha l)}{l}\bW(l)
    -
    e((\alpha + \beta)k + \beta t)
    \sum_{\substack{l \in \Z \\ |l| > 2k}}
    \frac{e(-\beta l)}{l} \bW(l)
    \\
    +
    \sum_{\substack{l \in \Z \\ |l| > 2k}}
    \frac{e(\alpha (k-l)) e(\beta(k - l + t)) - 1)(k+t)}
    {l(k - l + t)}
    \bW(l).
\end{multline*}
The last sum is absolutely convergent, hence continuous for
any realization of $\bW(l)$. So the only problems with convergence
may arise from the first two sums, which are independent of $t$. But moment bounds easily show that
they converge with probability 1. More generally, we will shortly compute
moment bounds for all finite dimensional distributions of
$F_{k,\alpha,\beta}$ which imply almost-sure convergence for every $t$.
\end{proof}

\section{Moment computations}\label{sec:moments}
We now examine the mixed moments of the finite dimensional distributions
of our various random processes.
\subsection{Varying character \texorpdfstring{$\chi \mod q$}{}}
We use the following bound for the sum of characters weighted by the Gauss sum.
\begin{lemma}\label{lemma:gauss-sum}
    For any $a$ mod $q$ and $n \ge 1$ we have
    \[
        \frac{1}{q-1}\sum_{\chi \mod q} \chi(a) \tau(\chi)^n \le n q^{(n-1)/2}. 
    \]
\end{lemma}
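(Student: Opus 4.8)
The plan is to recognize the normalized sum as an $n$-dimensional Kloosterman sum and then invoke the Weil--Deligne bound. First I would dispose of the degenerate case: if $q \mid a$ then $\chi(a) = 0$ for every character $\chi$ mod $q$, so the left-hand side is $0$ and there is nothing to prove; henceforth assume $\gcd(a,q) = 1$.

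Next I would expand the Gauss sum using complete multiplicativity of $\chi$,
\[
    \tau(\chi)^n = \sum_{t_1,\dots,t_n \bmod q} \chi(t_1\cdots t_n)\, e\!\Big(\tfrac{t_1+\cdots+t_n}{q}\Big),
\]
multiply by $\chi(a)$, sum over all $\chi$ mod $q$, and interchange the order of summation to get
\[
    \sum_{\chi \bmod q} \chi(a)\,\tau(\chi)^n
    = \sum_{t_1,\dots,t_n \bmod q} e\!\Big(\tfrac{t_1+\cdots+t_n}{q}\Big) \sum_{\chi \bmod q} \chi(a\, t_1\cdots t_n).
\]
By orthogonality of the characters mod $q$, the inner sum equals $q-1$ when $a\,t_1\cdots t_n \equiv 1 \bmod q$ --- which in particular forces each $t_i$ to be a unit mod $q$ --- and vanishes otherwise. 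Dividing by $q-1$ and writing $a^{-1}$ for the inverse of $a$ mod $q$, this leaves
\[
    \frac{1}{q-1}\sum_{\chi \bmod q} \chi(a)\,\tau(\chi)^n
    = \sum_{\substack{t_1,\dots,t_n \in (\Z/q\Z)^\times \\ t_1\cdots t_n \,\equiv\, a^{-1} \ (\mathrm{mod}\ q)}} e\!\Big(\tfrac{t_1+\cdots+t_n}{q}\Big) =: \mathrm{Kl}_n(a^{-1};q).
\]

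It then remains only to quote the bound $\lvert \mathrm{Kl}_n(c;q)\rvert \le n\,q^{(n-1)/2}$, valid for every $c \in (\Z/q\Z)^\times$ and every prime $q$: this is immediate for $n = 1$, it is Weil's bound for the classical Kloosterman sum when $n = 2$, and for $n \ge 3$ it is Deligne's estimate, a consequence of the Riemann Hypothesis over finite fields. This last input is the only substantive ingredient, and I expect it to be the real (and essentially unavoidable) obstacle: the trivial estimate $\lvert \tau(\chi)\rvert = \sqrt q$ gives only $\le q^{n/2}$, and elementary second-moment (Parseval) considerations in the character aspect do not improve the exponent, so the stated saving of a factor of order $q^{1/2}/n$ genuinely requires the deep algebraic-geometric input. (If one were content with the weaker bound $q^{n/2}$, this lemma would reduce to a one-line triangle inequality; presumably the sharp Deligne form is what the later moment and Carleman-condition computations actually need.)
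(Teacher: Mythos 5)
Your proposal is correct and follows essentially the same route as the paper: expand $\tau(\chi)^n$, apply orthogonality over all characters mod $q$ to reduce to a hyper-Kloosterman sum, and invoke the Weil--Deligne bound $\lvert \mathrm{Kl}_n(c;q)\rvert \le n q^{(n-1)/2}$. The paper merely compresses this into a citation of Lemma 8.3 of Granville--Soundararajan (adjusted to sum over all characters rather than just the odd ones), so your write-up is simply a fuller version of the same argument.
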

\begin{proof}
    This is essentially the same as Lemma 8.3 of \cite{GS-large-character-sums},
    except that we sum over all characters instead of just the odd characters.
    Putting in the definition of the Gauss sum and using the orthogonality
    of Dirichlet characters, this becomes a hyper-Kloosterman sum, and bounds
    of Deligne apply.
\end{proof}

\begin{proposition}\label{prop:chi-moments}
    For $t_1, t_2, \ldots, t_J \in [0,1]$, $r_1, \ldots, r_J, s_1, \ldots, s_J \in \N$,
    fixed $\alpha$ and $\beta$, and for fixed $k$, we have
\[
    \E \left[ \prod_{j=1}^J \tilde F_{k,q,\alpha,\beta}(t_j)^{r_j} \overline{\tilde F_{k,q,\alpha,\beta}(t_j)^{s_j}} \right]
    \longrightarrow
    \E \left[ \prod_{j=1}^J F_{k,\alpha,\beta}(t_j)^{r_j} \overline{F_{k,\alpha,\beta}(t_j)^{s_j}} \right]
\]
as $q \rightarrow \infty$ over the primes. Moreover, this moment sequence of
$F_{k,\alpha,\beta}$ is determinate, so the finite dimensional distributions of
$\tilde F_{k,q,\alpha,\beta}$ tend to those of $F_{k,\alpha,\beta}$ as $q$ tend to infinity
over the primes.
\end{proposition}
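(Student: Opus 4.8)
The plan is to expand the product of the truncated random processes, interchange the finite sums with the expectation, and use the orthogonality of Dirichlet characters together with Lemma \ref{lemma:gauss-sum} to reduce the moment to a sum over solutions of a multiplicative congruence, which in the limit $q\to\infty$ counts exactly the multiplicative coincidences governing the random model $F_{k,\alpha,\beta}$. Concretely, write $c(n,t)=e(\alpha n)(e(\beta(n+t))-1)$ and set
\[
    r = \sum_j r_j, \qquad s = \sum_j s_j.
\]
Opening up $\E\big[\prod_j \tilde F_{k,q,\alpha,\beta}(t_j)^{r_j}\overline{\tilde F_{k,q,\alpha,\beta}(t_j)^{s_j}}\big]$, each of the $r$ unconjugated factors contributes a term $\frac{\tau(\chi)}{2\pi i q^{1/2}}\sum_{|m|<L(q)}\frac{c(m,\cdot)}{m+\cdot}\chibar(k-m)$ and each of the $s$ conjugated factors contributes the complex conjugate, which produces $\frac{\overline{\tau(\chi)}}{-2\pi i q^{1/2}}\sum_{|n|<L(q)}\frac{\overline{c(n,\cdot)}}{n+\cdot}\chi(k-n)$. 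Since $\chi(-1)\tau(\chi)\overline{\tau(\chi)} = q$ for a primitive character, the Gauss factors combine into $q^{(s-r)/2}\tau(\chi)^{\,r-s}$ (up to harmless signs and powers of $i$), and one is left with averaging $\tau(\chi)^{r-s}\,\chibar\!\big(\prod_i(k-m_i)\big)\chi\!\big(\prod_j(k-n_j)\big)$ over $\chi$.

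The next step is to apply orthogonality/Lemma \ref{lemma:gauss-sum}: when $r=s$ the character average is $1$ if $\prod_i(k-m_i)\equiv\prod_j(k-n_j)\pmod q$ and $O(q^{-1})$ otherwise (genuinely $0$ if additionally neither product vanishes mod $q$), while when $r\ne s$ Lemma \ref{lemma:gauss-sum} gives a bound of size $O\big(q^{(|r-s|-1)/2}\big)$ which, against the prefactor $q^{-(r+s)/2}q^{(r-s)/2}$, forces the contribution to $0$. Hence the moment vanishes in the limit unless $r=s$, matching the fact that $F_{k,\alpha,\beta}$ has a rotational symmetry from $\bX$. For $r=s$, all the coefficients $c(m_i,\cdot)/(m_i+\cdot)$ and $\overline{c(n_j,\cdot)}/(n_j+\cdot)$ are absolutely summable over $|m_i|,|n_j|<\infty$, so we may let $L(q)\to\infty$: the terms with $\prod_i(k-m_i)\equiv\prod_j(k-n_j)\pmod q$ but not equal as integers are pushed out to infinity (for fixed $k$, a fixed nontrivial polynomial identity in the $m_i,n_j$ can hold mod $q$ for only finitely many primes $q$, by size considerations, so for each fixed tuple the congruence eventually degenerates to equality), leaving precisely the sum over integer solutions of $\prod_i(k-m_i)=\prod_j(k-n_j)$ weighted by the appropriate product of coefficients. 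This is exactly $\E\big[\prod_j F_{k,\alpha,\beta}(t_j)^{r_j}\overline{F_{k,\alpha,\beta}(t_j)^{s_j}}\big]$, since for the Steinhaus variables $\bW$ the mixed moment $\E\big[\prod_i\bW(k-m_i)\prod_j\overline{\bW(k-n_j)}\big]$ is $1$ exactly when the multisets $\{k-m_i\}$ and $\{k-n_j\}$ coincide, i.e. when the products of prime powers match, and the diagonal term $\frac{q-1}{q}$ from orthogonality tends to $1$.

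For the determinacy (Carleman) part, I would bound the $(2d)$-th absolute moment $\E\,|F_{k,\alpha,\beta}(t)|^{2d}$ by counting solutions of $\prod_{i=1}^d(k-m_i)=\prod_{j=1}^d(k-n_j)$ in integers with the coefficients $\ll 1/(|m_i|+1)$; a standard divisor-type argument gives a bound of the form $C^d (2d)!$ or $C^d d!\,(\log d)^{O(d)}$, comfortably satisfying $\sum_d (\E|F_{k,\alpha,\beta}(t)|^{2d})^{-1/2d}=\infty$, and the same bound applied to arbitrary linear combinations $\sum_j a_j F_{k,\alpha,\beta}(t_j)$ upgrades this to joint determinacy of the finite dimensional distributions. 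Together with the convergence of moments just established and the classical fact that a determinate moment sequence forces weak convergence, this yields the stated convergence of the finite dimensional distributions. The main obstacle I expect is the bookkeeping in the $L(q)\to\infty$ limit: one must justify uniformly (in $q$) that the off-diagonal congruence solutions with $|m_i|,|n_j|<L(q)$ contribute negligibly, which for \emph{fixed} $k$ is easy by the finitely-many-primes argument above, but requires care to phrase so that it meshes with the truncation parameter; controlling the tails of the absolutely convergent series simultaneously with the growth of $L(q)$ is the delicate point.
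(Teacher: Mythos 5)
Your proposal is essentially the paper's own argument: expand the product, pull out the Gauss-sum factors, kill the off-diagonal case $r\ne s$ via Lemma \ref{lemma:gauss-sum}, use orthogonality in the diagonal case to reduce to the congruence $\prod_i(k+m_i)\equiv\prod_j(k+n_j)\pmod q$, observe that for fixed $k$ and $L(q)=q^{o(1)}$ the congruence forces genuine equality for large $q$, and conclude by dominated convergence against an absolutely convergent divisor-type majorant, with Carleman for determinacy. Two cautions. First, your justification of the limiting moments conflates two conditions: for completely multiplicative Steinhaus variables the mixed moment $\E\bigl[\prod_i\bW(k-m_i)\prod_j\overline{\bW(k-n_j)}\bigr]$ equals $1$ precisely when the \emph{products} agree, not when the multisets coincide (e.g.\ $\E[\bW(2)\bW(3)\overline{\bW(6)}]=1$); your "multisets coincide, i.e.\ the products match" is false as an equivalence, and this distinction is exactly what separates the fixed-$k$ limit $F_{k,\alpha,\beta}$ here from the permutation condition $\cS(\bm)$ appearing for $k\to\infty$ in Proposition \ref{prop:chi-moments2}. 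Since you do match against the product-equality equation in the end, the proof survives, but the statement needs correcting. Second, the Carleman step is more delicate than "a standard divisor-type argument": the naive route $d_r(x)^2\le d_{r^2}(x)$ gives only $\zeta(2)^{r^2}=e^{O(r^2)}$, which does \emph{not} satisfy Carleman; the paper instead invokes the sharper bound $\sum_x d_r(x)^2/x^2\le C^{2r}(\log 2r)^{2r}$ from \cite{BG-maximum}. Your claimed bound of shape $C^d(2d)!$ is true and would suffice (it yields a harmonic-type divergent series), but you should either prove it via the Euler-product computation or cite such a result rather than treat it as routine.
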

\begin{proof}
    Let $r = \sum r_j$ and $s = \sum s_j$ and write 
\[
    \mathbf{m_j}= (m_{j,1}, \dots, m_{j, r_j}, m_{j, r_j+1}, \dots, m_{j, r_j+s_j})
\]
and
\[
a(\bm_j; t_j)
=
    \prod_{i=1}^{r_j} c(m_{k,i}, t_j)
    \prod_{i=r_{j+1}}^{s_j + r_j} c(-m_{k,i}, -t_j)
\]
where
\[
    c(n,t) = \frac{e(\alpha n)(e(\beta(n + t) - 1)}{t-n}.
\]
Using the above notation, we have

\begin{multline}\label{eq:moment-mess}
    \prod_{j=1}^J \tilde F_{k,\chi,\alpha,\beta}(t_j)^{r_j} \overline{\tilde F_{k,\chi,\alpha,\beta}(t_j)}^{s_j}
    \\
    =
    e(\alpha T)
    \frac{\tau(\chi)^r \overline{\tau(\chi)}^s}{(2\pi)^{r+s}q^{(r + s)/2}}
 \sum_{\bm_1, \dots, \bm_J}
 \prod_{j=1}^J\prod_{\ell=1}^{r_j}\chibar(k + m_{j,\ell})
 \prod_{\ell=r_j + 1}^{s_j}\chi(k + m_{j,\ell})
 \prod_{j=1}^J a(\bm_j; t_j),
\end{multline}
where
\[
    T = \sum_{j=1}^J (r_j - s_j)t_j
\]
and the quantity
$$ \bm_j= (m_{j,1}, \dots, m_{j, r_j}, m_{j, r_j+1}, \dots, m_{j, r_j+s_j})$$
ranges over all $(r_j+s_j)$-tuples of integers $|m_{j, l}|\leq L(q)$.
Note that
\[
    \frac{\tau(\chi)^r \overline{\tau(\chi)^s}}{q^{(r+s)/2}} = \tau(\chi)^{|r-s|}/q^{|r-s|/2},
\]
so bringing the sum over characters inside, we have
\begin{multline*}
\E \left[ \prod_{j=1}^J \tilde F_{k,q,\alpha,\beta}(t_j)^{r_j} \overline{\tilde F_{k,q,\alpha,\beta}(t_j)}^{s_j} \right] = \\
    e(\alpha T)
 \sum_{\bm_1, \dots, \bm_J}
 \prod_{j=1}^J a(\bm_j; t_j)
 \frac{1}{q-1}
 \sum_{\chi \mod q}
 \frac{\tau(\chi)^{|r-s|}}{q^{|r - s|/2}}
 \prod_{j=1}^J\prod_{\ell=1}^{r_j}\chibar(k + m_{j,\ell})
 \prod_{\ell=r_j}^{s_j}\chi(k + m_{j,\ell}).
\end{multline*}
For the off diagonal terms ($r \ne s$), we can now insert the bound from Lemma
\ref{lemma:gauss-sum} for the inner sum to find that
\[
    \E \left[ \prod_{j=1}^J \tilde F_{k,q,\alpha,\beta}(t_j)^{r_j} \overline{\tilde F_{k,q,\alpha,\beta}(t_j)}^{s_j} \right]
     \le |r-s|q^{-1/2}
 \sum_{\bm_1, \dots, \bm_J}
 \prod_{j=1}^J \abs{a(\bm_j; t_j)}. \ll \frac{(\log L(q))^{r+s}}{q^{1/2}},
\]
where for the final inequality we notice that
\[
    \sum_{\bm_1, \dots, \bm_J}
    \prod_{j=1}^J \abs{a(\bm_j; t_j)} =
    \prod_{j=1}^J
    \left|\sum_{|m| < L(q)} c(m, t_j)\right|^{r_j + s_j}
    \ll (\log L(q))^{r+s}.
\]
For the limiting process, the off diagonal case is much easier: rotation
invariance tells us that
\[
\E \left[ \prod_{j=1}^J F_{k,\alpha,\beta}(t_j)^{r_j} \overline{F_{k,\alpha,\beta}(t_j)}^{s_j} \right]
 = 0
\]
whenever $r \ne s$.

Meanwhile, when $r=s$, the terms $\tau(\chi)^{|r-s|}/q^{|r-s|/2}$ go away and
\[
 \frac{1}{q-1}
 \sum_{\chi \mod q}
 \prod_{j=1}^J\prod_{\ell=1}^{r_j}\chibar(k + m_{j,\ell})
 \prod_{\ell=r_j + 1}^{s_j}\chi(k + m_{j,\ell})
 = \cS_q(k, \bm),
\]
say, where
\[
 \cS_q(k, \bm) =
 \left\{
 \begin{array}{ll}
     1 & \textrm{ if } 
     \prod_{j=1}^J\prod_{\ell=1}^{r_j}(k + m_{j, \ell}) \equiv
     \prod_{j=1}^J\prod_{\ell=r_j + 1}^{s_j}(k + m_{j, \ell})  \not \equiv 0 \mod q\\
     0 & \textrm{ otherwise.}
 \end{array}
 \right.
\]
Then
\[
\E \left[ \prod_{j=1}^J \tilde F_{k,q,\alpha,\beta}(t_j)^{r_j} \overline{\tilde F_{k,q,\alpha,\beta}(t_j)}^{s_j} \right] =
    e(\alpha T)
 \sum_{\bm_1, \dots, \bm_J}\cS_q(k, \bm)
 \prod_{j=1}^J a(\bm_j; t_j)
 .
\]
When $r=s$, the same computations yield
\[
\E \left[ \prod_{j=1}^J F_{k,\alpha,\beta}(t_j)^{r_j} \overline{F_{k,\alpha,\beta}(t_j)}^{s_j} \right] = \\
    e(\alpha T)
 \sum_{\bm_1, \dots, \bm_J} \cS(k, \bm)
 \prod_{j=1}^J a(\bm_j; t_j)
,
\]
where there is now no restriction on the size of $\bm_j$ and
\[
 \cS(k, \bm) =
 \left\{
 \begin{array}{ll}
     1 & \textrm{ if } 
     \prod_{j=1}^J\prod_{\ell=1}^{r_j}(k + m_{\ell}) =
     \prod_{j=1}^J\prod_{\ell=r_j + 1}^{s_j}(k + m_{\ell}) \ne 0 \\
     0 & \textrm{ otherwise.}
 \end{array}
 \right.
\]
As $q \rightarrow \infty$, for any fixed $r, s$ and $k$, it is clear that
\[
 \sum_{
     \substack{\bm_1, \dots, \bm_J \\ \abs{m_{j,l}} < L(q) }
    } \cS_q(k, \bm)
 \prod_{j=1}^J a(\bm_j; t_j)
 \longrightarrow
 \sum_{\bm_1, \dots, \bm_J}\cS(k, \bm)
 \prod_{j=1}^J a(\bm_j; t_j)
 ,
\]
provided that the right hand side is absolutely convergent, as the congruence
conditions in the definition of $\cS_q$ play no role for large enough $q$.
Now note that $c(n,t) \ll 1/(\abs{n} + 1)$, so $a(\bm_j, t) \ll
\prod_{i=1}^{r_j + s_j} (\abs{m_{j,i}} + 1)^{-1}$.
To see that the right hand side is absolutely convergent it is convenient to
reorder by the size of $\prod_{j=1}^{J}\prod_{i=1}^{r_j} (k + m_{j,i})$. Then
we see that
\begin{equation}\label{eg:moment-divisor-bound}
\E \left[ \prod_{j=1}^J F_{k,\alpha,\beta}(t_j)^{r_j} \overline{F_{k,\alpha,\beta}(t_j)}^{s_j} \right]
    \ll \sum_{x = 1}^\infty \frac{1}{x^2}
    \sum_{\substack{n_1, \ldots, n_r \in \Z \\ m_1, \ldots, m_r \in \Z
        \\ (n_1 + k)\cdots(n_r + k) = x \\ (m_1 + k)\cdots(m_r + k) = x }}
        1
        \ll 2^{2r} \sum_{x=1}^\infty \frac{d_r(x)^2}{x^2}.
\end{equation}
This bound on the moments is also sufficient to prove that the moments
determine the distribution. From \cite{BG-maximum}*{Proposition 3.2},
for example, we have
\[
    \sum_{x=1}^\infty \frac{d_r(x)^2}{x^2} \le C^{2r}(\log 2r)^{2r}
\]
for some constant $C$. So for any collection of $t_j$ we see that
\begin{multline*}
    \sum_{r=1}^\infty \E \left[ \prod_{j=1}^J F_{k,\alpha,\beta}(t_j)^{r_j} \overline{F_{k,\alpha,\beta}(t_j)}^{s_j} \right]^{-1/2r}
    \gg \\
    \sum_{r=1}^\infty \left(\frac{1}{(2C)^{2r} (\log (2r) )^{2r}}\right)^{1/2r}
    = \sum_{r=1}^\infty \frac{1}{2C \log(2r)} = \infty.
\end{multline*}
Hence the moment sequences satisfy the multivariate complex
Carleman condition (see \cite{moment-problem}*{Theorem 15.11}) and are determinate.
\end{proof}

We now turn to the remaining parts of Theorem \ref{thm:random-process}, i.e. study what happens when $k$ varies with $q$. In our bounds for the 
off-diagonal moments, the size of $k$ plays no role. On the other hand, the
$\bm$ which contribute to positive values of $\cS(k, \bm)$ depend on some
shifted factorizations and $k$ visibly affects the moments.

When $k$ grows with $q$, it will generally
be the case that $\cS_q(k, \bm)$ is nonzero exactly
when the second half of $\bm$ is a permutation of the first half, and
thus the influence of the multiplicativity of $\chi$ goes away.
However, if $k$ is growing fast enough there will be some primes
where the multiplicativity is important, so we need to remove these primes
to get convergence of the moments. Specifically, we will prove the following.

\begin{proposition} \label{prop:chi-moments2}
    Fix real numbers $\alpha$ and $\beta$. There exist increasing functions $L(q)\to\infty$ such that: 
\begin{itemize}
    \item[(1)] For any sequence of positive integers $k_q\to\infty$ such that
        $k_q = q^{o(1)}$, the finite dimensional distributions of $\tilde
        F_{k_q, q, \alpha, \beta}(t)$ converge to those of $F_{\alpha,
        \beta}(t)$ as $q\to\infty$ over the primes.
    \item[(2)] For any sequence of positive integers $k_q\to\infty$ such that
        $k_q = o(\pi(q))$, there exists a full density subset of the primes $\cP$
        such that the finite dimensional distributions of $\tilde F_{k_q, q,
        \alpha, \beta}(t)$ converge to those of $F_{\alpha, \beta}(t)$ as $q\in\cP$ tends to infinity. 
    \item[(3)] For any irrational $\theta$, there exists a full density subset
        of the primes $\cP_\theta$ such that the finite dimensional distributions
        of $\tilde F_{\floor{\theta q}, q, \alpha, \beta}(t)$ converge to those
        of $F_{\alpha,\beta}(t)$ as $q \in \cP_\theta$ tends to infinity.
\end{itemize}
\end{proposition}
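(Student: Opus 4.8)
The plan is to build on the moment computation of Proposition \ref{prop:chi-moments}, isolating the diagonal terms. As in that proof, the off-diagonal moments ($r \ne s$) are $O((\log L(q))^{r+s} q^{-1/2}) \to 0$ by Lemma \ref{lemma:gauss-sum}, and this bound is entirely insensitive to the size of $k_q$; so I only need to treat $r = s$. When $r = s$ the moment equals
\[
    e(\alpha T) \sum_{\bm_1, \dots, \bm_J} \cS_q(k_q, \bm) \prod_{j=1}^J a(\bm_j; t_j),
\]
with $\cS_q(k_q,\bm)$ the indicator of the mod-$q$ multiplicative coincidence among the shifted variables $k_q + m_{j,\ell}$, while the target moment for $F_{\alpha,\beta}$ is the same sum with $\cS_q$ replaced by $\cS^{\mathrm{perm}}(\bm)$, the indicator that the ``denominator'' half of $\bm$ is a permutation of the ``numerator'' half (this is exactly the moment structure of the independent process $F_{\alpha,\beta}$, since the $e(\alpha l)$ and Gauss-sum factors cancel against their conjugates in the permutation-matched terms). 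So the whole task is to show
\[
    \sum_{\substack{\bm_1,\dots,\bm_J \\ |m_{j,\ell}| < L(q)}} \bigl(\cS_q(k_q,\bm) - \cS^{\mathrm{perm}}(\bm)\bigr)\prod_{j=1}^J a(\bm_j;t_j) \longrightarrow 0,
\]
and then invoke the same Carleman bound \eqref{eg:moment-divisor-bound} (unchanged by $k_q$, since it only uses $c(n,t) \ll 1/(|n|+1)$ and the divisor bound $\sum d_r(x)^2/x^2 < \infty$) to conclude determinacy and hence convergence of finite-dimensional distributions.

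The difference $\cS_q - \cS^{\mathrm{perm}}$ is supported on two kinds of bad tuples: (a) permutation-matched tuples that happen to be $\equiv 0 \bmod q$ (i.e. some $k_q + m_{j,\ell} \equiv 0$), which for $|m_{j,\ell}| < L(q)$ forces $k_q$ to lie in one of $O(L(q)^{r+s})$ residue classes and contributes at most $O(L(q)^{r+s}/(k_q - L(q)) \cdot (\text{tail})) \to 0$ once $k_q \to \infty$ faster than any power of $L(q)$; and (b) non-permutation tuples for which nonetheless $\prod(k_q + m_{j,\ell}^{\mathrm{num}}) \equiv \prod(k_q + m_{j,\ell}^{\mathrm{den}}) \bmod q$. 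Case (b) is the heart of the matter. For a fixed non-permutation choice of the $\bm$'s, the quantity $D(\bm) := \prod(k + m_\ell^{\mathrm{num}}) - \prod(k + m_\ell^{\mathrm{den}})$ is, as a polynomial in $k$, of degree $\le r-1$ and \emph{not identically zero} (since the two monic factorizations differ), with coefficients bounded in terms of $r$ and $L(q)$; so the congruence $q \mid D(k_q)$ says precisely that $k_q$ is a root mod $q$ of a nonzero small-coefficient polynomial. This is where I would invoke the promised Lemma \ref{lemma:fpalpha} (referred to in Section \ref{roadmap}): for $k_q = o(\pi(q))$ or $k_q = \lfloor \theta q\rfloor$ with $\theta$ irrational, the set of primes $q \le X$ for which $k_q$ is such a root is $o(\pi(X))$. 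Summing a geometric-type series over the (finitely many, for each fixed truncation) polynomial shapes, the bad contribution is $\ll \sum_{x} d_r(x)^2/x^2$ restricted to $x$ coming from bad $q$, which vanishes along a full-density set of primes.

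The remaining subtlety — and the main obstacle — is that the set of exceptional primes a priori depends on $r$ (equivalently on which finite-dimensional moment we are testing), so a single full-density $\cP$ (resp.\ $\cP_\theta$) must work simultaneously for \emph{all} moments and all finite-dimensional marginals. I would handle this by a diagonal / iterative density argument: enumerate the countably many pairs $(r, \text{polynomial shape})$ that can ever arise; for the $N$-th pair Lemma \ref{lemma:fpalpha} gives an exceptional set $\mathcal{E}_N$ of density zero; choose a rapidly increasing sequence $X_1 < X_2 < \cdots$ so that $|\mathcal{E}_N \cap [1,X]| \le X/(N \cdot 2^N)$ for $X \ge X_N$, and let $\cP$ be the primes $q$ with $q \notin \mathcal{E}_N$ whenever $q \ge X_N$. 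Then $\cP$ has full density, and for each fixed $r$ only finitely many shapes matter, all eventually avoided along $\cP$, so the moment convergence above holds along $\cP$ for every finite-dimensional distribution; this is also where the freedom to take $L(q) \to \infty$ \emph{arbitrarily slowly} is used, to keep the number of relevant polynomial shapes at scale $q$ small enough that the density bookkeeping closes. Case (1), $k_q = q^{o(1)}$, needs no exceptional set at all: then the coefficients of $D(k)$ are $q^{o(1)}$, a nonzero value $|D(k_q)| \le k_q^{r-1} L(q)^{O(r)} = q^{o(1)} < q$ cannot be divisible by $q$, so $\cS_q = \cS^{\mathrm{perm}}$ identically for $q$ large, and the convergence is immediate. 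Finally, once all finite-dimensional distributions converge and the limit moments satisfy Carleman, Lemma \ref{lemma:tightness} supplies the tightness hypothesis \eqref{kolmogorov-condition}, so Proposition \ref{prop:prokhorov-criteria} upgrades this to weak convergence of the processes, completing all three parts.
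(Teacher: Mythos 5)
Your proposal is correct and follows essentially the same route as the paper: off-diagonal moments are killed by the Gauss-sum bound exactly as for fixed $k$, and the diagonal moments are reduced to ruling out congruences $h(k_q)\equiv 0 \pmod q$ for nonzero polynomials $h$ of degree $\le r-1$ with coefficients of size $L(q)^{O(r)}$, which is handled by Lemma \ref{lemma:fpalpha} together with a union-over-all-degrees density/bookkeeping argument (packaged in the paper as Lemma \ref{lemma:goodprimes}), the Carleman bound being unaffected by $k_q$. The one nuance you elide is that in case (1) you must also exclude $D(k_q)=0$ over $\Z$ (not only nonzero values divisible by $q$), which the paper handles by choosing $L(q)=k_q^{o(1)}$ so that $k_q$ eventually exceeds every root of such a small-coefficient polynomial — a fix already implicit in the freedom over $L(q)$ that you invoke.
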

\begin{proof}
    We again match the moments of the random processes.
    We will abbreviate $F_{\alpha,\beta} = F$ and $\tilde F_{k_q, q,
    \alpha,\beta} = \tilde F_{k,q}$, and again write $r = \sum r_j$ and $s =
    \sum s_j$.
    We first examine $F$. Recall that
    \[
        F(t) = \frac{e(\alpha t)}{2\pi i}
        \sum_{l \in \Z}
        \frac{e(\beta(l + t)) - 1}
        {l + t}
        \bX(l).
    \]
    $F$ is clearly unchanged if we multiply by another independent variable
    $\bX$ uniformly distributed on the unit circle. So when $r \ne s$ the
    moments vanish:
    \[
        \bE \prod_{j=1}^J F(t_j)^{r_j} \overline{F(t_j)}^{s_j} = 
        \bE \prod_{j=1}^J \bX^{r_j} F(t_j)^{r_j} \overline{\bX^{s_j} F(t_j)^{s_j}}
        = \bE \bX^{r-s} 
        \bE \prod_{j=1}^J F(t_j)^{r_j} \overline{F(t_j)}^{s_j} = 0.
    \]
    When $r=s$ we can compute as in the previous proof.
    We find that
\[
\E \left[ \prod_{j=1}^J F(t_j)^{r_j} \overline{F(t_j)}^{s_j} \right] = 
    e(\alpha T)
 \sum_{\bm_1, \dots, \bm_J}\cS(\bm)
 \prod_{j=1}^J a(\bm_j; t_j)
 ,
\]
where again there is no restriction on the size of $\bm_j$ and
this time
\[
 \cS(\bm) =
 \left\{
 \begin{array}{ll}
     1 & \textrm{ if } 
     m_1, \ldots, m_l \textrm{ is a permutation of } m_{l+1}, \ldots m_{2l} \\
     0 & \textrm{ otherwise.}
 \end{array}
 \right.
\]
The absolute convergence of this series and upper bounds for its
moments follow as before, again showing that this process is determined
by its moments.
Now we go back and consider the moments of $\tilde F_{k, q}$.
When $r \ne s$, the upper bounds from the previous proof work just as well,
as they did not involve $k$.

When $r = s$ we are concerned with whether
$\cS_q(k_q, \bm) = \cS(\bm)$ for all large $q$ and small $\bm$.
Equivalently, we want to know that the only
solutions to
\[
    (k_q+n_1)(k_q+n_2)\cdots(k_q+n_r) \equiv (k_q+m_1)(k_q+m_2)\cdots(k_q+m_r) \pmod q
\]
with $n_l, m_l < L(q)$ are those where $m_1, \ldots, m_r$ is
a permutation of $n_1, \ldots, n_r$. In general, this may not be the case. For
each $n_1, \ldots, n_r$ and $m_1, \ldots, m_r$ we consider the polynomial
\[
    h_{\bn, \bm}(x) = (x+n_1)(x+n_2)\cdots(x+n_r) - (x+m_1)(x+m_2)\cdots(x+m_r).
\]
If $\bn$ is not a permutation of $\bm$, this is not the zero
polynomial. The coefficients are bounded by some power of $L(q)$,
and it is exactly the content of Lemma \ref{lemma:goodprimes} (below in Section \ref{primeRemSect}) that
there exist choices of $L(q)$ such that: in case
(1), there are only finitely many primes such that $h(k_q)\equiv 0\pmod q$ for
any non-zero polynomial with small coefficients; in each of cases (2) and (3),
there is a full
density subset of the primes $\cP$ such that $h(k_q)\not\equiv 0\pmod q$ for any
non-zero polynomial with small coefficients when $q\in\cP$ is large enough.
\end{proof}

\subsection{Quadratic characters}

We now turn to the quadratic character case and finish the proof of Theorem \ref{thm:random-quadratic-process}. Recall that
$G_{q, \alpha, \beta}$ is the random process given by
choosing $k$ mod $q$ uniformly at random and forming the
function
\[
    \frac{q^{1/2}}{\tau(\chi_q)}
    F_{k, \chi_q, \alpha, \beta}(t) = \frac{e(-\alpha k)}{\tau(\chi_q)}
        \sum_{\alpha q < n \leq (\alpha + \beta)q} \chi_q(n)e(n(k + t)/q),
\]
where $\chi_q$ is the quadratic character mod $q$.
However, rather than working with this process directly, we work with
the truncated approximation $\tilde G_{q, \alpha, \beta}$ given by choosing
$k$ at random and forming the function
\[
    \frac{q^{1/2}}{\tau(\chi_q)}
    \tilde F_{k,\chi_q,\alpha,\beta}(t)
    = \frac{e(\alpha t)}{2\pi i}
    \sum_{\abs{l} < L(q)}
    \frac{e(\alpha l) (e(\beta(l + t)) - 1)}
    {l + t }
    \chi_q(k-l).
\]

\begin{proposition}\label{prop:quadratic-moments}
    For $t_1, t_2, \ldots, t_J \in [0,1]$, $r_1, \ldots, r_J, s_1, \ldots, s_J \in \N$,
    fixed $\alpha$ and $\beta$, and for fixed $k$, we have
\[
    \E \left[ \prod_{j=1}^J \tilde G_{q,\alpha,\beta}(t_j)^{r_j} \overline{\tilde G_{q,\alpha,\beta}(t_j)^{s_j}} \right]
    \longrightarrow
    \E \left[ \prod_{j=1}^J G_{\alpha,\beta}(t_j)^{r_j} \overline{G_{\alpha,\beta}(t_j)^{s_j}} \right]
\]
as $q \rightarrow \infty$ over the primes. Moreover, this moment sequence of
$G_{\alpha,\beta}$ is determinate, so the finite dimensional distributions of
$\tilde G_{q,\alpha,\beta}$ tend to those of $G_{\alpha,\beta}$ as $q$ tends to infinity
over the primes.
\end{proposition}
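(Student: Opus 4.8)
The plan is to run the proof of Proposition \ref{prop:chi-moments} essentially verbatim, keeping track of the two structural differences in the quadratic setting: the Gauss sum has been divided out of $\tilde G_{q,\alpha,\beta}$, and the character $\chi_q$ is real, so that the averaging is now over $k$ rather than over $\chi$ and conjugating $\chi_q$ does nothing. Expanding the mixed moment exactly as there, with the same coefficients $c(n,t)$ and $a(\bm_j;t_j)$ (these involve only the ``sum part'' of $\tilde F_{k,\chi_q,\alpha,\beta}$, which coincides with that of $\tilde G_{q,\alpha,\beta}$), and using $\overline{\chi_q}=\chi_q$, one gets
\[
    \E\Big[\prod_{j=1}^J \tilde G_{q,\alpha,\beta}(t_j)^{r_j}\overline{\tilde G_{q,\alpha,\beta}(t_j)}^{s_j}\Big]
    = \frac{e(\alpha T)}{(2\pi i)^{r}(-2\pi i)^{s}}\sum_{\substack{\bm\\ |m_{j,\ell}|<L(q)}} \Big(\prod_{j=1}^J a(\bm_j;t_j)\Big)\,\frac{1}{q}\sum_{k=0}^{q-1}\chi_q\big(f_\bm(k)\big),
\]
where $T=\sum_j(r_j-s_j)t_j$ and $f_\bm(x)=\prod_{j,\ell}(x+m_{j,\ell})$ is the monic degree-$(r+s)$ polynomial whose roots, with multiplicity, are the negatives of all the shifts occurring, from the holomorphic and the anti-holomorphic factors together. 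Since $L(q)=q^{o(1)}$, for large $q$ the shifts are pairwise distinct modulo $q$, so over $\F_q$ we have $f_\bm=\prod_m (x+m)^{e_m}$, with $e_m$ the total multiplicity of the value $m$ among all the $m_{j,\ell}$.

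The heart of the argument is the inner character sum $\frac{1}{q}\sum_k\chi_q(f_\bm(k))$. If every $e_m$ is even then $f_\bm$ is the square of a monic polynomial in $\F_q[x]$, so $\chi_q(f_\bm(k))=1$ for every $k$ outside the at most $r+s$ roots of $f_\bm$, whence the inner sum equals $1+O((r+s)/q)$. Otherwise $f_\bm$ is not a constant times a polynomial square, and the Weil bound for character sums gives $\big|\sum_{k\bmod q}\chi_q(f_\bm(k))\big|\le(r+s-1)q^{1/2}$, so the inner sum is $O((r+s)q^{-1/2})$. Exactly as in the proof of Proposition \ref{prop:chi-moments}, $\sum_{\bm}\prod_j|a(\bm_j;t_j)|\ll(\log L(q))^{r+s}=q^{o(1)}$, so the total contribution of the non-square tuples and of the $O(1/q)$ corrections is $o(1)$, leaving
\[
    \E\Big[\prod_{j=1}^J \tilde G_{q,\alpha,\beta}(t_j)^{r_j}\overline{\tilde G_{q,\alpha,\beta}(t_j)}^{s_j}\Big]
    = \frac{e(\alpha T)}{(2\pi i)^{r}(-2\pi i)^{s}}\sum_{\substack{\bm:\ e_m\ \mathrm{even}\ \forall m\\ |m_{j,\ell}|<L(q)}}\prod_{j=1}^J a(\bm_j;t_j)\ +\ o(1).
\]

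It remains to identify the main term with the moments of $G_{\alpha,\beta}$ and pass to the limit. Since the $\bY(l)$ are independent $\pm1$ variables, $\E\,\bY(l)^a=1$ for $a$ even and $0$ for $a$ odd, so expanding $\prod_j G_{\alpha,\beta}(t_j)^{r_j}\overline{G_{\alpha,\beta}(t_j)}^{s_j}$ term by term produces precisely $\frac{e(\alpha T)}{(2\pi i)^{r}(-2\pi i)^{s}}\sum_{\bm:\,e_m\ \mathrm{even}\ \forall m}\prod_j a(\bm_j;t_j)$, now with $\bm$ unrestricted in size; in particular this vanishes unless $r+s$ is even, which is the analogue here of the off-diagonal vanishing in Proposition \ref{prop:chi-moments} but governed by the parity of $r+s$ rather than by $r=s$ (this is exactly why $G_{\alpha,\beta}$ differs from $F_{\alpha,\beta}$). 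Because $c(n,t)\ll(|n|+1)^{-1}$ and the even-multiplicity condition forces the $r+s$ factors to split into $(r+s)/2$ equal pairs, this series is absolutely convergent, with absolute value at most $(r+s)!\,C^{r+s}$ for an absolute constant $C$; hence the truncated sums converge to it as $L(q)\to\infty$, which is the asserted convergence of moments. The same bound yields $\sum_n\big(\E|G_{\alpha,\beta}(t)|^{2n}\big)^{-1/2n}\gg\sum_n (C'n)^{-1}=\infty$, so the (multivariate) Carleman condition holds and $G_{\alpha,\beta}$ is determined by its moments, exactly as at the end of the proof of Proposition \ref{prop:chi-moments}; together with Proposition \ref{proposition:truncation} and Lemma \ref{lemma:tightness} this also completes the proof of Theorem \ref{thm:random-quadratic-process}.

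The step I expect to be the main obstacle is the clean identification and evaluation of $\frac{1}{q}\sum_k\chi_q(f_\bm(k))$: one must recognize that, \emph{because $\chi_q$ is real}, the ``diagonal'' tuples surviving in the limit are precisely those in which every shift occurs with even total multiplicity --- a genuinely different, and weaker-looking, condition than the product-matching one of Proposition \ref{prop:chi-moments} --- and then use the Weil bound to dispose of everything else. All remaining ingredients (absolute convergence of the limiting series, exchanging limit and summation, the Carleman estimate) are routine and run parallel to the corresponding steps for $\tilde F_{k,q,\alpha,\beta}$.
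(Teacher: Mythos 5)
Your proposal is correct and follows essentially the same route as the paper: expand the mixed moment, average over $k$, apply the Weil bound to the incomplete character sum $\sum_k \chi_q(f_{\mathbf m}(k))$ (square-of-a-polynomial tuples give the main term, all others contribute $O(q^{-1/2})$ each), identify the surviving even-multiplicity tuples with $\E\prod \bY(m_{j,\ell})$, and conclude via absolute convergence and the Carleman condition. The only cosmetic difference is your moment bound: you count pairings directly to get $(r+s)!\,C^{r+s}$, whereas the paper bounds the paired sum by $\sum_x d_{r+s}(x^2)/x^2 \ll C^{2(r+s)}(\log 2(r+s))^{2(r+s)}$; both suffice for determinacy.
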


\begin{proof}
    We begin with the same notation and combinatorics as in the proof of
    Proposition \ref{prop:chi-moments}, recalling 
    \eqref{eq:moment-mess}:
\begin{multline*}
    \prod_{j=1}^J \frac{q^{1/2}}{\tau(\chi_q)} \tilde F_{k,\chi_q,\alpha,\beta}(t_j)^{r_j} \overline{\tilde F_{k,\chi_q,\alpha,\beta}(t_j)}^{s_j}
    \\
    =
    e(\alpha T)
    \frac{1}{(2\pi)^{r+s}}
 \sum_{\bm_1, \dots, \bm_J}
 \prod_{j=1}^J\prod_{\ell=1}^{r_j}\chi_q(k + m_{j,\ell})
 \prod_{\ell=r_j + 1}^{s_j}\chi_q(k + m_{j,\ell})
 \prod_{j=1}^J a(\bm_j; t_j).
\end{multline*}
We take the sum over $k$ to compute the expectation
\begin{multline*}
\E \left[ \prod_{j=1}^J \tilde G_{q,\alpha,\beta}(t_j)^{r_j} \overline{\tilde G_{q,\alpha,\beta}(t_j)}^{s_j} \right] = \\
 \frac{e(\alpha T)}{q}
 \sum_{\bm_1, \dots, \bm_J}
 \prod_{j=1}^J a(\bm_j; t_j)
 \sum_{k \mod q}
 \prod_{j=1}^J\prod_{\ell=1}^{r_j}\chi_q(k + m_{j,\ell})
 \prod_{\ell=r_j + 1}^{s_j}\chi_q(k + m_{j,\ell}).
\end{multline*}
We use the Weil bound for character sums (see \cite{IwKo}*{Corollary 11.24},
for example) in the form
\[
 \frac{1}{q}
 \sum_{k \mod q}
 \prod_{j=1}^J\prod_{\ell=1}^{r_j}\chi_q(k + m_{j,\ell})
 \prod_{\ell=r_j + 1}^{s_j}\chi_q(k + m_{j,\ell})
 =
\bE\left(\prod_{j=1}^J\prod_{\ell=1}^{r_j+s_j} \bY(m_{j, \ell})\right) +
O\left(\frac{r+s}{\sqrt{p}}\right).
\]

Thus, we derive
\begin{align*}\
    \E \Bigg[ \prod_{j=1}^J & \tilde G_{q,\alpha,\beta}(t_j)^{r_j} \overline{\tilde G_{q,\alpha,\beta}(t_j)}^{s_j} \Bigg] \\
        & = e(\alpha T)
    \sum_{\mathbf{m_1}, \dots, \mathbf{m_J}}
        \prod_{j=1}^J a(\mathbf{m_j}; t_j)
        \bE\left(\prod_{j=1}^J\prod_{\ell=1}^{r_j+s_j} \bY(m_{j, \ell})\right)  + E_1,
\end{align*}
where 
\begin{align*}
 E_1 &
    \ll_{r, s} \frac{1}{\sqrt{q}} \sum_{\mathbf{m_1}, \dots, \mathbf{m_J}}
        \prod_{j=1}^J |a(\mathbf{m_j}; t_j)|
        = \frac{1}{\sqrt{q}}\prod_{j=1}^J
            \left(\sum_{|m|\leq L(q)} \frac{|c(m ,t_j)|}{m - t_j}\right)^{r_j+s_j}
            \hspace{-1.5em}\ll_{r, s} \frac{(\log L(q))^{r+s}}{\sqrt{q}},
 \end{align*}
since uniformly for $t\in [0,1]$ we have $|c(m ,t)|/(m - t)\ll 1/|m|$ for $|m|\geq 2$ and hence
\[
    \sum_{|m|\leq L(q)} \frac{|c(m ,t)|}{m - t} \ll
        1 + \sum_{2\leq |m|\leq L(q)} \frac{1}{|m|} \ll \log L(q).
\]
Formally, we get the same expression for the moments of $G_{\alpha,\beta}:$
\[
\bE\left(\prod_{j=1}^J
{G}_{\alpha,\beta}(t_j)^{r_j} \overline{{G}_{\alpha,\beta}(t_j)}^{s_j}\right)
=
e(\alpha T)
    \sum_{\mathbf{m_1}, \dots, \mathbf{m_J}}
        \prod_{j=1}^J a(\mathbf{m_j}; t_j)
        \bE\left(\prod_{j=1}^J\prod_{\ell=1}^{r_j+s_j} \bY(m_{j, \ell})\right),
\]
where there is now no restriction on the size of $m_{j,l}$. So it is clear
that the moments of $\tilde G_{q,\alpha,\beta}$ converge to those of
$G_{\alpha,\beta}$ provided that this sum is absolutely convergent.
The expectation
\[
    \bE\left(\prod_{j=1}^J\prod_{\ell=1}^{r_j+s_j} \bY(m_{j, \ell})\right)
\]
is $0$ unless the terms $m_{j,l}$ coincide in pairs, in which case it is $1$.
If the $m_{j,l}$ coincide in pairs, then in particular their product is a
square. Similar to the bound \eqref{eg:moment-divisor-bound} in the proof of
Proposition \ref{prop:chi-moments} we have the upper bound
\begin{align*}
    \sum_{\mathbf{m_1}, \dots, \mathbf{m_J}}
        \abs{\prod_{j=1}^J a(\mathbf{m_j}; t_j)
        \bE\left(\prod_{j=1}^J\prod_{\ell=1}^{r_j+s_j} \bY(m_{j, \ell})\right)}
    & 
    \ll
    2^{r+s} \sum_{x=1}^\infty \frac{d_{r+s}(x^2)}{x^2} \\
    &
    \ll 2^{2r} \sum_{x=1}^\infty \frac{d_{r+s}(x)^2}{x^2} \\
    &
    \ll C^{2(r+s)}(\log 2(r+s))^{2(r+s)}
\end{align*}
for some constant $C$. This similarly provides an upper bound for the moments
which proves that the moment sequence is determinate, as before.
\end{proof}

\section{Prime removal lemmas}\label{primeRemSect}
When computing the moments in parts (2) and (3) of Proposition
\ref{prop:chi-moments2} (and hence in parts (3) and (4) of Theorem \ref{thm:random-process}),
it is necessary to remove a density zero subset of the primes in order
to get convergence. Here we prove the lemmas that we use to do this.
\begin{lemma}\label{lemma:fpalpha}\phantom{blah}
   \begin{itemize} \item[(1)]  Suppose that 
    $k_p\to\infty$ is a sequence of integers indexed by the primes,
    with $\#\{k_p : p\leq X\} =: \psi(X)$ and $k_p\ll p$, and $f(n) = \sum_{j=0}^d e_j n^j \in
    \Z[n]$ is a polynomial of degree $d$. Then
    \[
        \#\left\{p\leq X: f(k_p) \equiv 0 \mod p \right\} 
            \ll d\psi(X)\log\max\abs{e_j}.
    \]
    
    \item[(2)] Suppose that $\theta$ is irrational and $f(n) = \sum_{j=0}^d e_j n^j \in
    \Z[n]$ is a polynomial of degree $d$. Then
    \[
        \#\left\{p\leq X : f(\floor{\theta p}) \equiv 0 \mod p \right\} \ll
        X^{1/2}\left(d + \frac{\log\max\abs{e_j}}{\log X}\right).
    \]
    \end{itemize}
\end{lemma}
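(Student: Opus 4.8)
The plan is to reduce both statements to the elementary fact that a nonzero integer $N$ has at most $\log|N|/\log T$ prime divisors exceeding $T$; all implied constants below may depend on $\theta$ (and in part (1) on the sequence $(k_p)$) but not on $f$, and I would quietly discard the finitely many primes for which $k_p$ is small enough that $f(k_p)$ could vanish. For \emph{part (1)}, I would group the primes being counted by the value $v:=k_p$. For a fixed such $v$, which I may assume is large (so that $f(v)\neq 0$ by the Cauchy bound $1+\max|e_j|$ on the roots of $f$), the hypothesis $k_p\ll p$ forces $p\gg v$, so every prime $p$ with $k_p=v$ that is counted is a prime divisor of the nonzero integer $f(v)$ exceeding $cv$; hence there are at most
\[
    \frac{\log|f(v)|}{\log(cv)}\le\frac{\log(d+1)+\log\max|e_j|+d\log v}{\log(cv)}\ll d+\frac{\log\max|e_j|}{\log v}\ll d\log\max|e_j|
\]
of them, once $v$ is large enough (so that $\log(cv)\ge\tfrac12\log v$). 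Summing over the at most $\psi(X)$ distinct values $v$ gives the claim; this part is really just bookkeeping once one spots the ``$p\gg v$'' reduction.

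For \emph{part (2)} the new ingredient is Diophantine approximation. I would split the primes $p\le X$ into dyadic blocks $p\in(P,2P]$, the block $P\le X^{1/2}$ being trivial (it contributes $\pi(X^{1/2})\ll X^{1/2}/\log X$). For a block with $P>X^{1/2}$, take the continued-fraction convergent $a/b=a_k/b_k$ of $\theta$ with $b$ maximal subject to $b\le\sqrt P$, so that $b_{k+1}>\sqrt P$ and $|\theta-a/b|<1/(b\,b_{k+1})<1/(b\sqrt P)$; then for $p\in(P,2P]$ one has $|\theta p-(a/b)p|<2\sqrt P/b$, whence $\floor{\theta p}=\floor{ap/b}+\delta$ with $\delta$ ranging over a set of $O(\sqrt P/b)$ integers. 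Writing $r:=ap\bmod b\in[0,b)$, a quantity depending only on $p\bmod b$, and using $\floor{ap/b}\equiv -r\,b^{-1}\pmod p$ for $p\nmid b$ (the primes dividing the various $b$ being negligible, $O((\log X)^2)$ in total), the condition $p\mid f(\floor{\theta p})$ becomes $p\mid g_\delta(r)$ with
\[
    g_\delta(r):=b^d f\!\Big(\frac{\delta b-r}{b}\Big)=\sum_{j=0}^d e_j(\delta b-r)^j b^{d-j}\in\Z,\qquad |g_\delta(r)|\ll_d\max|e_j|\,P^{d/2},
\]
the size estimate following from $|r|<b\le\sqrt P$ and $|\delta b|\ll\sqrt P$. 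For each of the $\le b$ classes $r$ and each of the $O(\sqrt P/b)$ shifts $\delta$ — a total of $O(\sqrt P)$ pairs per block — either $g_\delta(r)\neq0$, so $p>P$ is one of the $\ll\log|g_\delta(r)|/\log P\ll d+\log\max|e_j|/\log X$ prime divisors of $g_\delta(r)$ exceeding $P$, or $g_\delta(r)=0$, which forces $\floor{\theta p}$ to be congruent modulo $p$ to a fixed rational root of $f$; clearing denominators, $p$ then divides a nonzero integer of size $\ll p$ that is linear in $\floor{\theta p}$, and the irrationality of $\theta$ confines this to a bounded set of primes. Thus each block contributes $\ll\sqrt P\,(d+\log\max|e_j|/\log X)$, and summing this geometric series over the $O(\log X)$ blocks — dominated by $P\asymp X$ — gives the stated bound $\ll X^{1/2}(d+\log\max|e_j|/\log X)$.

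The main obstacle is part (2), and specifically the balancing act in the choice of convergent: one must arrange that the number of residue classes ($\le b$) and the number of shifts ($O(\sqrt P/b)$) multiply to $O(\sqrt P)$ \emph{regardless of where $b$ falls in $[1,\sqrt P]$}, since it is this product, rather than the per-class divisor count, that converts the trivial bound $\pi(X)$ into the square-root saving. One must also verify carefully that the degenerate locus $g_\delta(r)=0$ — governed by the rational roots of $f$ and the way their denominators meet $b$ — really contributes negligibly, and keep the implied constants uniform in the coefficients of $f$ throughout (allowing dependence only on $\theta$, and on the sequence $(k_p)$ in part (1)).
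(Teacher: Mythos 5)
Your part (1) is correct and is essentially the paper's own argument: both proofs reduce to the fact that a nonzero integer $N$ has $O(\log\abs{N}/\log T)$ prime factors exceeding $T$, you grouping the counted primes by the value $v=k_p$ and the paper by dyadic ranges of $p$; your quiet discarding of the primes with $k_p$ small (where $f(k_p)$ might vanish) is at the same level of rigour as the paper, which does not address that case either.

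Part (2) follows the paper's strategy (rational approximation of $\theta$ at scale $\sqrt{P}$, clearing denominators so that $p\mid f(\floor{\theta p})$ becomes $p\mid g_\delta(r)$ for one of $O(\sqrt P)$ pairs $(r,\delta)$, then counting prime divisors exceeding $P$), but your treatment of the degenerate case $g_\delta(r)=0$ is a genuine gap. There you assert that $p$ divides a nonzero integer of size $\ll p$, linear in $\floor{\theta p}$, and that irrationality of $\theta$ confines this to a bounded set of primes. Neither claim holds as stated: the integer is $w\floor{\theta p}-u$ for a rational root $u/w$ of $f$, whose size is $\ll \max\abs{e_j}^2\,p$ rather than $\ll p$, and irrationality only bounds the \emph{size} of such $p$ in terms of $\frcmod{w\theta}$ for the finitely many $w\le\max\abs{e_j}$; it bounds neither their \emph{number} nor anything uniform in $f$. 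Indeed a single degenerate pair can swallow far more than $X^{1/2}$ primes: if $a/b$ is a convergent of a suitable Liouville-type $\theta$ with $0<b\theta-a=\epsilon$ extremely small (say $\epsilon<e^{-b}$) and $f(n)=bn-1$, then every prime $p$ with $1\le \epsilon p<1+b$ and $ap\equiv-1\pmod{b}$ satisfies $\floor{\theta p}=(ap+1)/b$, hence $f(\floor{\theta p})=ap\equiv 0\pmod p$; taking $X\asymp b/\epsilon$, so that $b\le\log X$, this already gives $\gg X/(\log X)^2$ counted primes, dwarfing $X^{1/2}\bigl(d+\log\max\abs{e_j}/\log X\bigr)$. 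So the zero locus cannot be dismissed by irrationality alone, and this is precisely the delicate point your closing paragraph flags but does not resolve.

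For comparison, the paper does not meet this case head on: after multiplying by $q^d$ (its analogue of your $b^d$) it argues that the resulting integer is nonzero because it reduces to $e_d(-ar)^d$ modulo the approximation denominator $q$, with $\gcd(ar,q)=1$ --- an argument valid exactly when $q\nmid e_d$, for instance whenever the denominator of the approximation exceeds $\max\abs{e_j}$ (as it does in the intended application, where the coefficients are of polylogarithmic size and one works outside such pathological configurations). Your degenerate case is exactly the complementary configuration, a rational root of $f$ whose denominator divides $b$. To repair your argument you would need either a Diophantine hypothesis on $\theta$ or a guarantee that the relevant convergent denominators can be taken larger than $\max\abs{e_j}$; as written, the step ``the irrationality of $\theta$ confines this to a bounded set of primes'' would fail.
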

\begin{proof}
    For part (1) of the lemma, we first consider the number of primes $p$ between $X$ and $2X$ such that $f(k_p)\equiv 0\pmod p$. Note that $f(k_p) \ll \max{\abs{e_j}}p^d$, so there are at most
    $O((d\log p + \log \max \abs{e_j})/\log X)$ primes $l\in [X, 2X]$ such
    that $l \mid f(k_p)$. But there are $\psi(2X)-\psi(X)$ possible
    values for $k_p$ in this range for $p$, so the whole product
    $\prod_{X\leq p\leq 2X} f(k_p) $ has at most
    $(\psi(2X)-\psi(X))O(d \log \max \abs{e_j})$ prime factors in $[X, 2X]$. Summing in dyadic intervals and telescoping concludes the proof.

    For part (2), we again consider
     number of primes between
    $X$ and $2X$ such that $f(\floor{\theta p}) \equiv 0$. Choose some $Q < X$
    (which will depend only on $X$) and let $a, q$ be such that
    $\abs{\theta - a/q} \le 1/qQ.$
    Consider some prime $p \in [X, 2X]$.
    Then $\theta p - ap/q = \lambda p$ for some $\lambda$ with 
    \[
        |\lambda p| \le \frac{p}{qQ}.
    \]
    Write $p = kq + r$ for some $k$ and $r$ with $1 \le r < q$,
    where $r$ will be coprime to $q$ because $p$ is a prime larger
    than $q$. It follows that
    \[
        \floor{\theta p} = \floor{\frac{ap}{q} + \lambda p} = \floor{ak + \frac{ar}{q} + \lambda p}
             = ak + c + \varepsilon,
    \]
    say, where $c = \floor{ar/q}$ is completely determined by $r$
    (once $a$ and $q$ have been fixed) and
    $\abs{c} < q$ and $\abs{\varepsilon} \le 1 + p/qQ$.
    Now the condition that $f(\floor{\theta p}) \equiv 0 \mod p$ is that
    \[
        f(ak + c + \varepsilon) \equiv 0 \mod {kq + r}.
    \]
    Write $f(n) = \sum_{j=0}^d e_j n^j$, so this is
    \[
        \sum_{j=0}^d e_j (ak + c + \varepsilon)^j \equiv 0 \mod {kq + r}.
    \]
    We multiply by $q^d$ to get
    \[
        \sum_{j=0}^d e_j (qak + qc + q\varepsilon)^j q^{d-j} \equiv 0 \mod {kq + r}.
    \]
    We now notice that $qak \equiv -ar \mod{kq + r}$, removing the dependence
    on $k$, that is
    \[
        \sum_{j=0}^d e_j (-ar + qc + q\varepsilon)^j q^{d-j} \equiv 0 \mod {kq + r}.
    \]
    Next notice that $\sum_{j=0}^d e_j (-ar + qc + q\varepsilon)^j q^{d-j}$
    is just a number, which for a given $X$ depends only on $r$ and $\varepsilon$. It
    is bounded by $O(\max\abs{e_j}(q + X/Q)^{2d})$ because $a, r, $ and $c$ are $< q$
    and $\varepsilon \ll X/qQ$, and
    it is nonzero because it is $\equiv -ar \mod q$ and $a$ and $r$ are
    coprime to $q$. So it is divisible by at most
    \[
        O\left(\frac{2d \log (q + X/Q) + \log\max\abs{e_j}}{\log X}\right)
    \]
    primes
    in the range $[X, 2X]$. There are $q$ different possibilities for $r$
    and $ \ll \min(1, X/qQ)$ different possibilities for $\varepsilon$, so there 
    can be at most
    \[
        O\left(\frac{\min(q, X/Q)(2d\log (q + X/Q) + \log\max\abs{e_j})}{\log X} \right)
    \]
    primes between $X$ and $2X$ such that $f(\floor{\theta p}) \equiv 0 \mod p$. 
    
    We finish by making the essentially optimal choice $Q = X^{1/2}$, giving the claimed bound. 
\end{proof}

\begin{lemma}\label{lemma:goodprimes}There exist increasing functions $L(q)\to\infty$, such that:
    \begin{enumerate}
    \item[(1)] For any sequence $k_q\to\infty$ such that $k_q = q^{o(1)}$,
        given any positive integer $d$ and for all large enough (in terms of $d$)
        primes $q$, any polynomial $f$ of degree $d$ with coefficients bounded
        by $L(q)^{d+1}$, satisfies $f(k_q)\not\equiv 0\pmod q$. 
    \item[(2)]
        For any sequence $k_q\to\infty$ such that
        $k_q < \psi(q)$ for an increasing function $\psi$ with $\psi(X) = o(\pi(X))$, there exists a subset of the primes $\cP$ such that:
        \begin{itemize}
            \item $\#(\cP_\theta \cap [1,X]) = \pi(X)(1 - o(1))$ for all $X$,
            \item for every positive integer $d$ 
        there are only finitely many (in terms of $d$) $q \in \cP$ such that there
        exists a polynomial $f$ of degree $d$ with coefficients bounded by $L(q)^{d+1}$
        with $f(k_q) \equiv 0 \mod q$.
        \end{itemize}
    \item[(3)] For any irrational number $\theta$ and $\varepsilon > 0$, there exists a
    subset $\cP_\theta$ of the primes such that
    \begin{itemize}
        \item $\#(\cP_\theta \cap [1,X]) > \pi(X) - O_{\varepsilon}(X^{1/2 + \varepsilon})$ for all $X$,
        \item for every positive integer $d$, there are only finitely many (in terms of $d$)
            $q \in \cP_\theta$ such that there exists a
            polynomial $f$ of degree $d$ with coefficients bounded by
            $L(q)^{d+1}$ 
        with $f(\floor{q\theta}) \equiv 0 \mod q$.
    \end{itemize}
\end{enumerate}
\end{lemma}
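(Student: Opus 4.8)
The plan is to establish the three cases by a common mechanism --- a union bound over the $O_d\big(L(q)^{(d+1)^2}\big)$ polynomials of a fixed degree with coefficients of the stated size, fed into the counting estimates of Lemma~\ref{lemma:fpalpha} --- and then to tune the truncation function $L$ to the data of each case (the sequence $k_q$ in (1), the function $\psi$ in (2), the irrational $\theta$ in (3)); in particular $L$ is \emph{chosen after} this data. The one elementary input used throughout is that a nonzero $f\in\Z[x]$ of degree at most $d$ with $f(k)=0$ for a nonzero integer $k$ has, after dividing out the largest power of $x$, a nonzero constant term; hence by the rational root theorem $\abs{k}\le\max_j\abs{e_j}$. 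This lets us trade the congruence $f(k_q)\equiv 0\bmod q$ for the equality $f(k_q)=0$ as soon as $k_q$ is large relative to the coefficient bound.

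For part~(1), I would take $L(q)=\floor{\log\big(\min_{p\ge q}k_p\big)}$, which is nondecreasing, tends to infinity since $k_p\to\infty$, satisfies $L(q)\le\log q\ll q^{o(1)}$, and crucially has $L(q)^{d+1}\le(\log k_q)^{d+1}=o(k_q)$ for every fixed $d$. Fix $d$ and let $q$ be large in terms of $d$. For any nonzero $f$ of degree at most $d$ with coefficients bounded by $L(q)^{d+1}$ we have on the one hand
\[
    \abs{f(k_q)}\le (d+1)\,L(q)^{d+1}\,k_q^{\,d}=q^{o(1)}<q,
\]
using $k_q=q^{o(1)}$, so $f(k_q)\equiv 0\bmod q$ would force $f(k_q)=0$; on the other hand $k_q>L(q)^{d+1}\ge\max_j\abs{e_j}$ together with the root bound above rules out $f(k_q)=0$. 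Hence no such $f$ exists, which is the assertion.

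For parts~(2) and~(3), I would instead bound the number of ``$d$-bad'' primes $q\le X$ --- those for which some admissible $f$ of degree $d$ has $f(k_q)\equiv 0\bmod q$. Since every such $f$ (for $q\le X$) has coefficients at most $L(X)^{d+1}$, there are $O_d\big(L(X)^{(d+1)^2}\big)$ of them, and applying Lemma~\ref{lemma:fpalpha} to each (using $\#\{k_q:q\le X\}<\psi(X)$ in case~(2)) a union bound yields, writing $\cB_d$ for the set of $d$-bad primes,
\[
    \#\big(\cB_d\cap[1,X]\big)\ \ll_d\ L(X)^{(d+1)^2}\log L(X)\,\psi(X)
\]
in case~(2), and $\#(\cB_d\cap[1,X])\ll_d L(X)^{(d+1)^2}X^{1/2}\big(d+\log L(X)/\log X\big)$ in case~(3). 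Choosing $L$ to grow slowly enough --- in case~(2) slower than every fixed power of $\pi(X)/\psi(X)$, which is possible because $\psi=o(\pi)$; in case~(3) simply $L(X)=\log X$ --- makes the right-hand side $o(\pi(X))$ in case~(2) and $\ll_{d,\varepsilon}X^{1/2+\varepsilon/2}$ in case~(3), for each fixed $d$.

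It remains to pass from ``$\cB_d$ is thin for each fixed $d$'' to a single exceptional set good for all $d$. I would use the standard diagonalization: as each finite union $\bigcup_{j\le d}\cB_j$ is again thin, choose an increasing sequence $N_1<N_2<\cdots$ so that $\#\big(\bigcup_{j\le d}\cB_j\cap[1,X]\big)\le\tfrac1d\pi(X)$ (case~(2)), resp.\ $\le X^{1/2+\varepsilon/2}$ (case~(3)), for all $X\ge N_d$, and in case~(3) also $N_d\ge 2^d$; then set $\cP=\{\text{primes}\}\setminus\bigcup_d\big(\cB_d\cap[N_d,\infty)\big)$. For each fixed $d$, every $d$-bad prime surviving in $\cP$ is $<N_d$, hence there are only finitely many; and for $X\in[N_D,N_{D+1})$ the removed set up to $X$ is contained in $\bigcup_{j\le D}\cB_j\cap[1,X]$, of size $o(\pi(X))$ in case~(2) and $\ll D\cdot X^{1/2+\varepsilon/2}\ll_\varepsilon X^{1/2+\varepsilon}$ in case~(3) (since $D\le\log_2 X$ there). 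This gives the two density statements. The part I expect to require the most care is exactly this last bookkeeping --- arranging the thresholds $N_d$ to grow fast enough that the exceptional set stays within the claimed density while still leaving only finitely many bad primes of each fixed degree --- together with the choice of $L$ in part~(1), which must be tied to the \emph{a priori unknown} speed at which $k_q\to\infty$ in order to secure finitely many (rather than merely density-zero) exceptions.
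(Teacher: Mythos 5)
Your proposal is correct and follows essentially the same route as the paper: part (1) by a size argument forcing $f(k_q)=0$ together with a bound on integer roots of polynomials with coefficients $\le L(q)^{d+1}$, and parts (2)--(3) by a union bound over the $O_d(L(X)^{(d+1)^2})$ admissible polynomials fed into Lemma \ref{lemma:fpalpha}, followed by removal of the bad primes. The only difference is cosmetic bookkeeping: you diagonalize with explicit thresholds $N_d$, whereas the paper works in dyadic blocks and lets $d$ range up to a slowly growing function of $X$ (e.g.\ $\log L(X)$ or $\log\log X$) before taking the union.
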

\begin{proof}
    For part (1), we can choose any $L(q)\to\infty$ with $L(q) = k_q^{o(1)}$ -- note that since $k_q = q^{o(1)}$, if $f$ is a polynomial of
        degree $d$ with coefficients bounded by $L(q)^{d+1}$ such that
        $f(k_q)\equiv 0\pmod q$, then in fact $f(k_q)=0$. Each of the
        polynomials with coefficients bounded by $L(q)^{d+1}$ has at most $d$
        zeros, all of which are $\ll L(q)^{d+1}$. The condition $L(q) < k_q^{o(1)}$
        ensures that for only finitely many primes $q$ it is possible that $k_q$
        is a zero of such a polynomial, since $k_q>L(q)^{d+1}$ for large enough
        primes $q$. 

    For part (2), we consider the primes between $X$ and $2X$.
        There are $\ll(L(X))^{O(d^2)}$ polynomials of degree $d$ with
        coefficients bounded by $L(X)^{d+1}$, so by Lemma
        \ref{lemma:fpalpha} there are at most $O(d^2 \psi(X)L(X)^{O(d^2)}\log L(X))$
        primes $q$ in this range such that $k_q$ can be a
        root of one of these polynomials. Let $\cB_d(X,2X)$ be this set of
        primes.

        Now we take $\cB(X,2X) = \bigcup_{d=0}^{\log L(X)} \cB_d(X, 2X)$,
        say, so that
        \[
            \#\cB(X,2X) \ll \psi(X) L(X)^{O((\log L(X))^2) + \varepsilon}.
        \]
        We choose $L(X) = \log(\pi(X)/\psi(X))^{1/3}$ and define $\cB =
        \bigcup_{j=0}^\infty \cB(2^j, 2^{j+1})$. Then we take $\cP$ to be all
        of the primes not in $\cB$.

    For part (3) we choose $L(q) = (\log q)^A$, and we again consider the
    primes between $X$ and $2X$. There are $\ll (\log X)^{A(d+1)^2}$
    polynomials of degree $d$ with coefficients bounded by $(\log X)^{(d+1)A}$,
    so by Lemma \ref{lemma:fpalpha} there are at most
    $O(A (d+1)^2 X^{1/2}(\log X)^{A(d+1)^2}\log\log X)$ primes in this range that
    can be a root of one of these polynomials. Let $\cB_d(X,2X)$ be this
    set of primes.
    Now we take $\cB(X,2X) = \bigcup_{d=0}^{\log\log X} \cB_d(X, 2X)$, say, so that
    $\#\cB(X,2X) \ll A X^{1/2 + \varepsilon}$, and take
    $\cB = \bigcup_{j=0}^\infty \cB(2^j, 2^{j+1})$.
    Then we take $\cP_\theta$ to be all of the primes not in $\cB$.
\end{proof}

\begin{remark}
     We note that the argument in Lemma \ref{lemma:goodprimes} is  qualitative in nature and uses the conclusion of Lemma \ref{lemma:fpalpha}, without reference to the size of $k_q$. That is, the proof goes through for sequences $k_q\gg \pi(q)$, as long as one has an estimate of the form $o(\pi(X))$ on the number of primes $q\leq X$ such that $f(k_q)\equiv 0\pmod q$ for each \emph{fixed} polynomial $f$, with a reasonable explicit dependence on $f$. This is likely to be true for \emph{all} sequences satisfying $k_q = o(q)$ by a reasonably explicit quantitative equidistribution statement on the roots modulo prime $q$ of a fixed polynomial. However, it is a difficult open problem to establish such a statement for any polynomial $f$ with $\deg f\ge 3.$
    If equidistribution of roots modulo $q$ of, say, the polynomial  $f(x)=x^3+2$ fails, then we may have to exclude a positive proportion of primes for the sequence $k_q$ given by solutions to $k_q^3+2\equiv 0\pmod q$ with $k_q = o(q).$ Such pathological solutions would make a significant contribution to the finite moments in the proof of Theorem \ref{thm:random-process} which would change the distribution. We leave the details to the interested reader.   
\end{remark}

\section{\text{log}-integrability}

We now turn our attention to proving Theorem \ref{thm5}, as well as Corollary \ref{mahler-quant}. As we discussed in Section \ref{roadmap}, it is of key importance to us to understand the size of $$\int_{\alpha}^{\beta} \log\lvert f(t)\rvert\mathds1_{\lvert f(t)\rvert<\varepsilon}\ud t$$ uniformly in smooth functions $f:[\alpha, \beta]\to\mathbb R$, when $\varepsilon$ is small. 
%Note that if this quantity goes to $0$ as $\varepsilon\to 0$, this guarantees the existence of $\int_{\alpha}^\beta \log\lvert f(t)\rvert\ud t.$ 
In this section, we are going to prove the following key result.

\begin{proposition}\label{logThm}
    Let $\Delta = \sum_{i=0}^k a_i \partial^i$ be a linear differential operator of order $k$ with constant real coefficients, and suppose that $(\Delta f)(t)\geq 1$ for all $t\in[\alpha, \beta]$. Then for small enough $\varepsilon>0$, we have
    \begin{equation}\nonumber
        \int_{\alpha}^{\beta} \log\lvert f(t)\rvert \mathds1_{\lvert f(t)\rvert<\varepsilon}\ud t \ll \varepsilon^{\frac{1}{2k}},
    \end{equation}
    where the implied constant is uniform in $f$, but may depend on $\Delta$, $\alpha$ and $\beta$. 
\end{proposition}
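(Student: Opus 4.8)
The plan is to control the measure of the bad set $B_\varepsilon = \{t \in [\alpha,\beta] : |f(t)| < \varepsilon\}$ and then to bound the contribution of $\log|f|$ on $B_\varepsilon$ in terms of that measure, via the elementary estimate $\int_{B_\varepsilon} \log|f(t)|\,\mathds 1_{|f(t)|<\varepsilon}\,\ud t = -\int_{B_\varepsilon} \log\frac{1}{|f(t)|}\,\ud t$ together with a layer-cake / distribution function argument: since $|f|$ is bounded below away from $0$ outside $B_\varepsilon$ is not automatic, the key point is that $B_\varepsilon$ must be \emph{small} — of measure $\ll \varepsilon^{1/k}$ — and moreover that $|f|$ cannot stay too close to $0$ on too large a subset, so that $\int_{B_\varepsilon}\log\frac{1}{|f|}$ is dominated by $|B_\varepsilon|^{1/2}$-type quantities. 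The hypothesis $\Delta f \ge 1$ is what forces $B_\varepsilon$ to be small: a function that is pinned near $0$ on a set of positive measure cannot have a fixed-order differential operator applied to it come out $\ge 1$ everywhere.

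First I would prove the measure bound $|B_\varepsilon| \ll \varepsilon^{1/k}$. The mechanism: if $f$ is very small at two nearby points (or, more precisely, if $|f| < \varepsilon$ on an interval of length $\delta$), then by a Markov–Bernstein / divided-difference argument all derivatives $f', f'', \ldots, f^{(k)}$ are forced to be correspondingly small at some point of that interval — roughly $|f^{(j)}| \ll \varepsilon/\delta^{j}$ somewhere, by iterating Rolle's theorem or using the Whitney-type estimate that a function of size $\le \varepsilon$ on an interval of length $\delta$ has $j$-th derivative of size $\ll \varepsilon \delta^{-j}$ at an interior point. Applying the operator $\Delta = \sum_{i=0}^k a_i \partial^i$ at such a point and using $\Delta f \ge 1$ gives $1 \le \sum_{i=0}^k |a_i| |f^{(i)}| \ll_\Delta \varepsilon \delta^{-k}$ (the top-order term $\delta^{-k}$ dominating when $\delta$ is small), hence $\delta \ll \varepsilon^{1/k}$. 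This shows $B_\varepsilon$ contains no interval longer than $C\varepsilon^{1/k}$; a covering/chaining argument then upgrades this to $|B_\varepsilon| \ll \varepsilon^{1/k}$ — one covers $[\alpha,\beta]$ by $O(\varepsilon^{-1/k})$ intervals of length $\asymp \varepsilon^{1/k}$ and observes each contributes boundedly, or more carefully one shows the connected components of the open set $\{|f|<\varepsilon\}$ each have length $\ll \varepsilon^{1/k}$ and there are $O(\varepsilon^{-1/k})$ of them because between consecutive components $|f|$ must climb back above $\varepsilon$, which again costs length $\gg$ something by the derivative bound. (Some care is needed here; it may be cleaner to bound $|B_\varepsilon|$ directly by applying the above reasoning at the scale $\delta = \varepsilon^{1/k}$ to a maximal disjoint family of such intervals.)

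Next I would run the layer-cake decomposition on the logarithmic integral. Writing $\lambda_\varepsilon(y) = |\{t : |f(t)| < y\}|$ for $y \le \varepsilon$, we have $\lambda_\varepsilon(y) \le \min(\beta-\alpha,\, C y^{1/k})$ by the previous step applied with $y$ in place of $\varepsilon$, and
\[
    \left| \int_\alpha^\beta \log|f(t)| \, \mathds 1_{|f(t)|<\varepsilon}\,\ud t \right|
        = \int_0^\varepsilon \frac{\lambda_\varepsilon(y)}{y}\,\ud y + |B_\varepsilon| \, \log \frac1\varepsilon
        \ll \int_0^\varepsilon \frac{y^{1/k}}{y}\,\ud y + \varepsilon^{1/k}\log\frac1\varepsilon
        \ll \varepsilon^{1/k}\log\frac1\varepsilon,
\]
using $\int_0^\varepsilon y^{1/k - 1}\,\ud y = k\varepsilon^{1/k}$. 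Since $\varepsilon^{1/k}\log\frac1\varepsilon \ll \varepsilon^{1/(2k)}$ for $\varepsilon$ small, the claimed bound follows, with an implied constant depending only on $\Delta$ (through $\max|a_i|$ and the separation of its leading coefficient from $0$) and on $\beta-\alpha$; the loss from $\varepsilon^{1/k}$ to $\varepsilon^{1/(2k)}$ is exactly the slack that absorbs the logarithm and any inefficiency in the covering argument.

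\textbf{Main obstacle.} The delicate step is the measure bound $|B_\varepsilon| \ll \varepsilon^{1/k}$ — in particular making the "small on a long interval forces a small high-order derivative" heuristic rigorous with constants uniform in $f$, and then controlling the \emph{number} of components of $\{|f|<\varepsilon\}$ rather than just their individual lengths. The cleanest route is probably: on any interval $I$ of length $\delta$ where $|f|<\varepsilon$, pick $k+1$ equally spaced points and form the $k$-th divided difference, which equals $f^{(k)}(\xi)/k!$ for some $\xi \in I$ and is also $\ll \varepsilon\delta^{-k}$ in absolute value by the triangle inequality on the divided-difference formula; combined with analogous (easier) bounds for lower derivatives and $\Delta f(\xi)\ge 1$, this pins down $\delta \ll_\Delta \varepsilon^{1/k}$, and then a Vitali-type covering of $B_\varepsilon$ by such intervals, noting that $[\alpha,\beta]$ has bounded length, gives the total measure bound. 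Everything else is routine real analysis.
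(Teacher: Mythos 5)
Your final reduction (layer-cake / dyadic decomposition of $(0,\varepsilon]$, using a measure bound $\mu\{t:|f(t)|<y\}\ll y^{1/k}$ at each scale and absorbing the logarithm into $\varepsilon^{1/(2k)}$) is correct and is exactly how the paper concludes. The gaps are in the measure bound itself, and the first one is at the core of your mechanism. The divided-difference/iterated-Rolle argument gives, for each order $j$, \emph{some} point $\xi_j$ in the interval with $|f^{(j)}(\xi_j)|\ll \varepsilon\delta^{-j}$, but these points vary with $j$, whereas your step ``$1\le \Delta f(\xi)\le\sum_i |a_i|\,|f^{(i)}(\xi)|\ll_\Delta \varepsilon\delta^{-k}$'' requires all the bounds at a single common point. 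Simultaneous smallness at one point is false in general: for $f(t)=\varepsilon\sin(Nt/\delta)$ with $N$ large, at every point at least one of $|f'|,|f''|$ is of size $\varepsilon N/\delta$, resp. $\varepsilon N^2/\delta^2$, so no point carries bounds $O(\varepsilon/\delta)$ and $O(\varepsilon/\delta^2)$ simultaneously. (That example does not satisfy $\Delta f\ge 1$, which is precisely the point: one must use the hypothesis on the \emph{whole} interval, not at one point.) The paper avoids this by a duality trick: it integrates $\varphi\,\Delta f$ over the component against a smooth bump $\varphi$ vanishing to order $k$ at both endpoints, so that integration by parts moves all of $\Delta$ onto $\varphi$; then $\int\varphi\,\Delta f\gg (x^+-x^-)$ while $\int f\,\widetilde{\Delta}\varphi\ll \varepsilon\,(x^+-x^-)^{1-k}$, giving the length bound $x^+-x^-\ll\varepsilon^{1/k}$ with no pointwise control of derivatives of $f$. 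Your pointwise route would need a genuinely new idea to be salvaged.

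The second gap is the count of components of $\{|f|<\varepsilon\}$. Knowing each component has length $\ll\varepsilon^{1/k}$ says nothing about $\mu(B_\varepsilon)$ if there can be $\asymp\varepsilon^{-1/k}$ components; a Vitali-type covering does not help (the set is already a union of intervals), and the heuristic that ``climbing back above $\varepsilon$ costs length'' is false for a general smooth $f$, since arbitrarily steep climbs cost arbitrarily little length. What is needed, and what the paper supplies as a separate lemma, is a uniform zero-counting statement: factor $\Delta$ over $\R$ into first- and second-order factors and apply Rolle's theorem (via the substitutions $e^{-cx}g$ and $u_2\partial(u_1\partial(u f))$) to show that any $g$ with $\Delta g$ nonvanishing has $O_\Delta(1)$ zeros; applied to $f$ (and one can likewise treat $f\mp\varepsilon$, since $\Delta(f\mp\varepsilon)\ge 1-\varepsilon|a_0|>0$ for small $\varepsilon$), this bounds the number of relevant components uniformly in $f$ and $\varepsilon$, and only then does the per-component length bound give $\mu(B_\varepsilon)\ll\varepsilon^{1/k}$. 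Your proposal is missing this ingredient entirely.
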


Proposition \ref{logThm} will follow by combining the following two lemmas. The first lemma bounds the number of zeros of $f$ in terms of the number of zeros of $\Delta f$, whereas the second guarantees that $f$ does not spend too much time near $0$. 

\begin{lemma}\label{prop1}
    Let $\Delta = \sum_{i=0}^k a_i \partial^i$ be a linear differential operator of order $k$ with constant real coefficients. Suppose that $\Delta f$ has at most $m$ zeros. Then $f$ has $\leq m + O_{\Delta}(1)$ zeros, where the implied constant is uniform in $f$.
\end{lemma}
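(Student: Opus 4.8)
The plan is to prove Lemma \ref{prop1} by induction on the order $k$ of the operator $\Delta$, reducing the general case to the single building block: if $g = f' + c f$ for a real constant $c$ and $g$ has at most $m$ zeros, then $f$ has at most $m + 1$ zeros. Granting this, a linear differential operator with constant real coefficients factors over $\mathbb{C}$ as $\Delta = a_k \prod_{i=1}^k (\partial - \lambda_i)$; pairing complex-conjugate roots, $\Delta$ is (up to the nonzero scalar $a_k$) a composition of first-order real operators $\partial - c$ and second-order real operators $\partial^2 - 2(\Re\lambda)\partial + |\lambda|^2$. So it suffices to handle each factor and track how the zero count grows; applying the building block $k$ times (twice per quadratic factor) shows that peeling off all factors of $\Delta$ costs at most $k$ extra zeros, giving $f$ at most $m + k$ zeros, and $k = O_\Delta(1)$.

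For the building block, suppose $g = f' + c f$. The trick is the integrating factor: $e^{ct} g(t) = (e^{ct} f(t))'$. Let $h(t) = e^{ct} f(t)$, so $h$ is smooth on $[\alpha,\beta]$, $h$ and $f$ have exactly the same zeros (since $e^{ct} \ne 0$), and $h' = e^{ct} g$ has exactly the same zeros as $g$, hence at most $m$ of them. By Rolle's theorem, between any two consecutive zeros of $h$ there lies a zero of $h'$; therefore $h$ has at most (number of zeros of $h'$) $+ 1 \le m + 1$ zeros, so $f$ has at most $m+1$ zeros. One subtlety worth a sentence: "number of zeros" should be interpreted carefully if $f$ has zeros of higher multiplicity or if one wants to count with multiplicity — but for the intended application $f$ is a finite trigonometric-type sum, its zeros are isolated, and a bound on the number of distinct zeros is what is needed; Rolle's argument delivers exactly this for distinct zeros. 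Alternatively, if one prefers to count with multiplicity throughout, the integrating-factor identity combined with the fact that a zero of $h$ of multiplicity $r \ge 1$ is a zero of $h'$ of multiplicity $r-1$ makes the bookkeeping go through verbatim.

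Carrying this out in order: (i) factor $\Delta$ into real linear and quadratic factors and observe it suffices to prove the lemma when $\Delta = \partial - c$ or $\Delta = (\partial - c_1)(\partial - c_2)$ acting on a chain, or more simply just iterate the first-order case; (ii) prove the first-order building block via the integrating factor $e^{ct}$ and Rolle; (iii) induct on $k$, at each step writing $\Delta = \Delta_1 \circ (\partial - c)$ (or extracting a real root, or handling a conjugate pair as two successive first-order complex steps while staying real by grouping), applying the building block to bound the zero count of the "inner" function in terms of that of $\Delta f$, then the inductive hypothesis; (iv) conclude $f$ has at most $m + k = m + O_\Delta(1)$ zeros, with the constant depending only on the order of $\Delta$ and hence uniform in $f$.

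The main obstacle is handling the constant-coefficient operator with complex (conjugate-pair) roots while keeping every intermediate function real-valued, since Rolle's theorem is a statement about real functions. The clean fix is not to factor over $\mathbb{C}$ at the level of functions but to note that each successive application of the first-order real building block only ever requires $g = f' + cf$ with $c \in \mathbb{R}$; a real quadratic factor $\partial^2 + b\partial + d$ with complex roots does not factor into real first-order operators, so instead one argues directly that if $g = f'' + b f' + d f$ has $m$ zeros then $f' + \tfrac{b}{2} f =: p$ satisfies (after the substitution removing the linear term, i.e. writing everything in terms of $u = e^{bt/2} f$) a first-order relation, letting us bound zeros of $p$, hence of $f$, by $m + 2$. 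In other words, the quadratic case is two nested integrating-factor-plus-Rolle steps, and this is the one place the calculation needs a little care to stay real; everything else is routine.
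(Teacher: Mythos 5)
Your first-order step (integrating factor $e^{ct}$ plus Rolle) is correct and is exactly what the paper does for the factors $\partial - c$. The genuine gap is in your treatment of the irreducible quadratic factors, which is precisely the hard case. A factor $\partial^2 - a\partial + b$ with $a^2 < 4b$ does not split into real constant-coefficient first-order operators, and your proposed repair --- substitute $u = e^{bt/2}f$, claim that $p = f' + \tfrac{b}{2}f$ then ``satisfies a first-order relation,'' and conclude that each quadratic factor costs at most $2$ extra zeros --- does not work. After the substitution the identity is $e^{bt/2}\,\Delta f = u'' + \omega^2 u$ with $\omega^2 = d - b^2/4 > 0$: this is irreducibly second order, and no bound of the form ``zeros of $f$ $\le$ zeros of $\Delta f$ plus $2$'' can hold. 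Concretely, take $\Delta = \partial^2 + \omega^2$ and $f(t) = \sin(\omega t) + \varepsilon$; then $\Delta f \equiv \varepsilon\omega^2$ has no zeros at all, while $f$ has roughly $\omega(\beta-\alpha)/\pi$ zeros in $[\alpha,\beta]$ once $\varepsilon$ is small. So your asserted total bound $m + k$ is false; the extra constant must be allowed to depend on the imaginary parts of the characteristic roots and on the interval, which is exactly why the lemma is stated with $O_\Delta(1)$ rather than $m+k$.

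The paper's way around this is a non-constant-coefficient (Polya-type) factorization: $(\partial^2 - a\partial + b)f = u_2\,\partial\bigl(u_1\,\partial(uf)\bigr)$ with $u(x) = e^{ax/2}/\cos\bigl(\tfrac{\sqrt{4b-a^2}}{2}x\bigr)$ and suitable nonvanishing $u_1, u_2$, followed by two applications of Rolle on each maximal subinterval of $[\alpha,\beta]$ on which $u$ has constant sign (i.e.\ between consecutive poles of the secant). Each such subinterval contributes at most two extra zeros, and the number of subintervals depends only on $a$, $b$, $\alpha$, $\beta$, not on $f$, giving the $O_\Delta(1)$ loss. To salvage your write-up, replace the ``first-order relation'' claim by this kind of argument; for instance, in the reduced case, between consecutive zeros of $\cos(\omega t)$ one has $u'' + \omega^2 u = \tfrac{1}{\cos(\omega t)}\bigl(\cos^2(\omega t)\,(u/\cos(\omega t))'\bigr)'$, and Rolle applied twice bounds the zeros of $u$ there by those of $u''+\omega^2 u$ plus two --- accepting that the final constant depends on $\Delta$, $\alpha$, $\beta$, and is in general larger than $k$.
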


\begin{lemma}\label{prop2}
    Let $\Delta = \sum_{i=0}^k a_i \partial^i$ be a linear differential operator of order $k$ with constant real coefficients, and suppose that $(\Delta f)(t)\geq 1$ for all $t\in [\alpha, \beta]$. Then 
    \begin{equation}\nonumber
        \mu\left(\{t\in[\alpha, \beta] : \lvert f(t)\rvert<\varepsilon \} \right)\ll \varepsilon^{\frac{1}{k}},
    \end{equation}
    where $\mu$ denotes the standard Lebesgue measure. 
\end{lemma}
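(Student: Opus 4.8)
The goal is to prove Lemma \ref{prop2}: if $(\Delta f)(t) \geq 1$ on $[\alpha,\beta]$ with $\Delta$ of order $k$, then the set where $|f| < \varepsilon$ has measure $\ll \varepsilon^{1/k}$.

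\medskip

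\noindent\textbf{Plan.} The natural approach is an iterated Kolmogorov--Landau / divided-difference argument. The key observation is that $\Delta f = a_k f^{(k)} + (\text{lower order terms})$, so the hypothesis $\Delta f \geq 1$ does \emph{not} directly give a lower bound on any single derivative of $f$. However, it does say that the vector $(f, f', \dots, f^{(k)})$ cannot be close to the origin: at every point of $[\alpha,\beta]$ we have $\sum_{i=0}^k |a_i||f^{(i)}(t)| \geq 1$, so $\max_i |f^{(i)}(t)| \geq 1/(k+1)\max_i|a_i| =: c_0 > 0$. The plan is to leverage this together with the fact that if $|f|$ is small on a large set, then by repeatedly applying the mean value theorem (or Markov-type inequalities on intervals) all the derivatives $f', f'', \dots, f^{(k)}$ must also be small on a slightly smaller set, contradicting the pointwise lower bound for small enough $\varepsilon$.

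\medskip

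\noindent\textbf{Key steps.} First, I would establish the following quantitative statement: if $g \in \mathscr{C}^1$ on an interval $I$ of length $\ell$ and $|g| < \delta$ on a subset $E \subseteq I$ of measure $|E| > \ell/2$, then on a subinterval $I' \subseteq I$ of length $\geq \ell/4$ one has $|g'| \ll \delta/\ell$ at many points --- more precisely, $E$ contains two points at distance $\geq \ell/4$ apart, so by the mean value theorem $g'$ has a zero-ish point, i.e. $|g'(\xi)| \leq 4\delta/\ell$ for some $\xi \in I$; more usefully, the set where $|g'| < C\delta/\ell^2 \cdot(\text{something})$ is large. A cleaner route: cover the ``bad set'' $S = \{t : |f(t)| < \varepsilon\}$ and suppose for contradiction $|S| \geq M\varepsilon^{1/k}$ for a large constant $M$. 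Partition $[\alpha,\beta]$ into $\sim \varepsilon^{-1/k}$ intervals of length $\sim \varepsilon^{1/k}$; by pigeonhole one such interval $I$ contains a subset of $S$ of measure $\geq \frac{M}{2}\varepsilon^{1/k} \cdot \frac{|I|}{|I|}$, i.e. $S\cap I$ is a positive fraction of $I$. On $I$, iteratively apply a lemma of the form: \emph{if $h \in \mathscr{C}^1(I)$ and $|h| < \eta$ on a subset of $I$ of measure $\geq \lambda |I|$, then $|h'| < C\eta/(\lambda|I|)$ on a subset of $I$ of measure $\geq \lambda'|I|$} (this follows from the mean value theorem applied to consecutive points of the large set, plus discarding a controlled portion). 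Applying this $k$ times starting from $h = f$, $\eta = \varepsilon$, $|I| \sim \varepsilon^{1/k}$, one finds a point (in fact a set) where simultaneously $|f| < \varepsilon$, $|f'| \ll \varepsilon^{1-1/k}$, $|f''| \ll \varepsilon^{1-2/k}$, \dots, $|f^{(k)}| \ll \varepsilon^{1 - k/k} = \varepsilon^0 = O(1)$ --- but with a small implied constant that can be made $< c_0$ by choosing $\varepsilon$ small (and the intermediate constants $\varepsilon^{1-j/k} \to 0$). This contradicts $\sum|a_i||f^{(i)}| \geq 1$. One must track the fraction $\lambda$ carefully so that after $k$ iterations it is still positive; starting with $\lambda_0$ a fixed fraction and halving (or so) at each step, $\lambda_k \geq 2^{-k}\lambda_0 > 0$ works since $k$ is fixed.

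\medskip

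\noindent\textbf{Main obstacle.} The delicate point is making the iteration robust: each application of the mean-value-theorem step requires that the large subset of $I$ is not ``spread out in a pathological way'' --- one needs that a set of relative measure $\lambda$ in an interval of length $\ell$ contains two points at distance $\gtrsim \lambda \ell$, which is immediate, but to pass from "$h'$ small at one point" to "$h'$ small on a set of definite measure" one should instead use a Chebyshev/Markov-type argument: $\int_E |h'| \geq |\text{(values of } h \text{ at endpoints of subintervals)}|$ won't directly work, so the cleanest fix is to bound $h'$ using the identity $h'(x) = \frac{h(x) - h(y)}{x-y} + \frac{1}{x-y}\int_y^x (h'(x) - h'(t))\,dt$ — but this reintroduces $h''$. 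The genuinely clean tool here is the classical inequality $\|h^{(j)}\|_{L^\infty(I)} \ll_j |I|^{-j}\|h\|_{L^\infty(I)} + |I|^{k-j}\|h^{(k)}\|_{L^\infty(I)}$ (Kolmogorov/Gorny-type on a finite interval, or just Taylor expansion with Lagrange remainder at Chebyshev nodes), combined with a covering argument that exploits that $|f| < \varepsilon$ on a set of measure $\geq M\varepsilon^{1/k}$ forces, by subdividing, an interval of length $\gg \varepsilon^{1/k}$ on which $f$ has $\geq k+1$ near-zeros, hence (by interpolation / repeated Rolle) $\|f\|_{L^\infty} \ll \varepsilon$ there while $|I|^k \gg \varepsilon$, so that the Kolmogorov inequality and the hypothesis $\Delta f \geq 1$ collide. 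Getting the book-keeping of constants (which may depend on $\Delta,\alpha,\beta$ but not on $f$) exactly right, and in particular ensuring the threshold "$\varepsilon$ small enough" depends only on $\Delta$ and the interval, is the part requiring the most care. I expect Lemma \ref{prop1} to then be a standard consequence of Rolle's theorem together with an analysis of the fundamental solutions of the constant-coefficient ODE $\Delta g = 0$ (whose solution space is spanned by quasi-polynomials $t^j e^{\mu t}$, each with boundedly many sign changes on a fixed interval), and Proposition \ref{logThm} will follow by combining: $f$ has $O_\Delta(1)$ zeros on $[\alpha,\beta]$ (Lemma \ref{prop1} with $\Delta f$ having $0$ zeros since $\Delta f \geq 1$), around each zero $f$ is roughly monotone on a one-sided neighbourhood, and on each such piece $\int \log|f|\,\mathds{1}_{|f|<\varepsilon}$ is estimated via the measure bound of Lemma \ref{prop2} through the layer-cake formula $\int_0^\varepsilon \frac{1}{u}\,\mu(\{|f|<u\})\,du \ll \int_0^\varepsilon \frac{u^{1/k}}{u}\,du \ll \varepsilon^{1/k}$, and finally $\varepsilon^{1/k} \gg \varepsilon^{1/(2k)}$ for small $\varepsilon$ — wait, we want an \emph{upper} bound $\ll \varepsilon^{1/(2k)}$, which is weaker than $\varepsilon^{1/k}$, so this is fine; the exponent $1/(2k)$ presumably has slack to absorb the logarithmic factor from $\log(1/\varepsilon)$ appearing when integrating $\log|f|$ near where $f$ is merely Lipschitz rather than having a definite-order zero.
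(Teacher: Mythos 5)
Your plan has genuine gaps at the decisive steps. First, the localization by pigeonhole fails: if $|\{|f|<\varepsilon\}|\ge M\varepsilon^{1/k}$, splitting $[\alpha,\beta]$ into $\asymp\varepsilon^{-1/k}$ intervals of length $\varepsilon^{1/k}$ only produces an interval in which the sublevel set has relative density $\gg M\varepsilon^{1/k}$, which tends to $0$, not a fixed positive fraction; the set could be spread out uniformly. (The correct way to localize is to bound the number of connected components, e.g.\ by applying Lemma \ref{prop1} to $f\pm\varepsilon$, for which $\Delta(f\pm\varepsilon)\ge 1-|a_0|\varepsilon\ge\frac12$ when $\varepsilon$ is small; then some single component is an interval of length $\gg |S|$.) Second, and more seriously, the iteration step is unsound: the lemma you want --- ``$|h|<\eta$ on a large subset of an interval of length $\ell$ implies $|h'|\ll\eta/\ell$ on a subset of definite measure'' --- is false for general $h$ (take $h(t)=\eta\sin(t/\delta)$ with $\delta\ll\ell$), and it can only be rescued by using the constraint $\Delta f\ge 1$, which your argument has not invoked at that stage. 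Your fallback via the Landau--Kolmogorov/Gorny inequality does not repair this, because that inequality requires an upper bound on $\|f^{(k)}\|_{L^\infty(I)}$, and the hypothesis $\Delta f\ge 1$ provides no such bound. Finally, divided differences or repeated Rolle at near-zeros only yield, for each $j$, \emph{some} point $\xi_j$ with $|f^{(j)}(\xi_j)|\ll\varepsilon/|I|^j$; these points vary with $j$, whereas contradicting $\sum_i|a_i|\,|f^{(i)}(t)|\ge 1$ needs all derivatives $0,\dots,k$ small at the \emph{same} point. So the announced ``collision'' with the hypothesis is never substantiated.

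For comparison, the paper sidesteps all pointwise derivative information by a weak (duality) argument: on a component $J=(x^-,x^+)$ of $\{|f|<\varepsilon\}$ it takes a smooth bump $\varphi$ equal to $1$ on the middle third of $J$ and vanishing to order $k$ at the endpoints, and integrates by parts, $\int_J\varphi\,\Delta f=\int_J f\,\widetilde\Delta\varphi$ with $\widetilde\Delta=\sum_i(-1)^ia_i\partial^i$. Positivity $\Delta f\ge 1$ makes the left-hand side $\gg|J|$, while $|f|<\varepsilon$ and $\varphi^{(i)}\ll|J|^{-i}$ make the right-hand side $\ll\varepsilon|J|^{1-k}$, whence $|J|\ll\varepsilon^{1/k}$; combined with the uniform bound on the number of components coming from Lemma \ref{prop1}, this gives the measure estimate. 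That transfer of the derivatives onto a test function is precisely the idea missing from your sketch, and without it (or an equivalent use of the sign and structure of $\Delta f\ge 1$) the approach does not close.
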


\begin{proof}[Proof of Proposition \ref{logThm}, assuming Lemmas \ref{prop1} and \ref{prop2}]
    By Lemma \ref{prop1} and the condition\linebreak $(\Delta f)(t)\geq 1$ for all $t\in [\alpha, \beta]$, $f$ has at most $T$ zeros, for some finite number $T$ (independent of $f$). For each zero $x$ of $f$, let $(x)$ denote the interval containing $x$ such that $\lvert f(x)\rvert<\varepsilon$. 

Now decompose $(0, \varepsilon]$ into dyadic intervals and use Proposition \ref{prop2} in each interval, replacing $\lvert f\rvert$ with the lower bound of the interval, namely 
    \begin{equation}\nonumber
        \int\log\lvert f\rvert\mathds1_{\lvert f\rvert<\varepsilon}\ll \sum_{l=1}^\infty \int\log\lvert f\rvert\mathds1_{\lvert f\rvert\in\left[\frac{\varepsilon}{2^l}, \frac{\varepsilon}{2^{l-1}}\right)}\ll \sum_{l=1}^\infty \log \frac{2^{l-1}}{\varepsilon
        }\cdot(\varepsilon/2^{l})^{1/k}\ll \varepsilon^{1/k}\log\frac{1}{\varepsilon}\ll \varepsilon^{\frac{1}{2k}}.
    \end{equation}
This finishes the proof. \end{proof}

Now we prove Lemmas \ref{prop1} and \ref{prop2}. 

\begin{proof}[Proof of Lemma \ref{prop1}] We factorize the differential operator $\Delta$ over $\mathbb R$ as follows: 
\begin{equation}\nonumber
    \Delta = \prod_{i=1}^{k_1}(\partial^2 - a_i\partial + b_i) \cdot\prod_{j=1}^{k_2}(\partial -c_j),
\end{equation} where $a_i^2 <4b_i$ for each $1\leq i\leq k_1$. By induction, it suffices to show that both operators $\partial - c$ and $\partial^2 -a\partial + b$ do not significantly increase the number of zeros. Indeed, $\partial - c$ does not increase the number of zeros by more than one by Rolle's theorem applied to $e^{-cx}g(x)$, for 
\begin{equation}\nonumber
(\partial - c)g = g'-cg = \frac{\partial}{\partial x}(e^{-cx} g(x) )e^{cx}.
\end{equation}
It remains to show the same holds for the operator $\partial^2 - a\partial + b$. Note that 
\begin{equation}\nonumber
    (\partial^2 - a\partial + b)f = u_2\partial\left(u_1\partial(uf) \right),
\end{equation}
where 
\begin{equation}\nonumber
    u_2(x) = e^{ax}u(x), u_1(x) = e^{-ax}u^{-2}(x),\text{ and } u(x) = \frac{e^{\frac{a}{2}x}}{\cos\left(\frac{\sqrt{4b-a^2}}{2}x\right)}.
\end{equation}
The claim follows by applying Rolle's theorem twice in each subinterval of $[\alpha, \beta]$ (depending on $a, b$) where $u$ has constant sign. In each such subinterval, the number of zeros does not increase by more than two. It is clear that the number of such subintervals only depends on $\Delta$ and not on $f$, hence the conclusion follows. \end{proof}

\begin{proof}[Proof of Lemma \ref{prop2}] Let us start with a standard technical claim.
    There exists a smooth function $\phi : [0, 1]\to [0, 1]$, with the following properties: \begin{itemize}
        \item[(i)] $\phi(t) = 1$ for $t\in[\frac{1}{3}, \frac23]$;
        \item[(ii)] $\phi^{(k)}(0) = \phi^{(k)}(1) = 0$ for all $k=0, 1, 2, \ldots$;
        \item[(iii)] $\phi^{(k)}$ is bounded for every $k=0, 1, 2, \ldots$.
    \end{itemize}
    Indeed, define
\begin{equation}\label{eq:rho}
\beta(t)=
\begin{cases}
e^{-1/t} & t>0,\\[2mm]
0 & t\le 0,
\end{cases}
\qquad
\rho(t)=\frac{\beta(t)}{\beta(t)+\beta(1-t)},
\end{equation}
and set
\[
\phi(t)=\rho\left(3t\right)\rho\left(3(1-t)\right),\qquad t\in [0, 1].
\]
Note that $\phi$ is indeed a smooth function, supported on $[0, 1]$ which is equal to $1$ for $x\in [\frac 13, \frac 23]$. Moreover, all derivatives of $\phi$ vanish at $0$ and $1$ and are bounded. 

By Lemma \ref{prop1}, $f$ has a finite number of zeros (uniformly in $f$). For each zero $x$ of $f$, let $(x) = (x^-, x^+)$ be the interval containing $x$ such that $\lvert f(t)\rvert<\varepsilon$ for each $t\in (x)$. Let $\varphi$ be a smooth function, approximating $\mathds1_{(x)}$, with the property that $\varphi(x^-) = \varphi(x^+)=\varphi'(x^-)=\varphi'(x^+) = \cdots = \varphi^{(k)}(x^-) = \varphi^{(k)}(x^+) = 0$ (take $\varphi(x) = \phi((x-x^{-})/(x^+ - x^-))$). Note that 
$\int_{x^-}^{x^+} f^{(l)}\varphi = (-1)^l\int_{x^-}^{x^+} f\varphi^{(l)}$ by integration by parts and vanishing boundary terms and consequently  
\begin{equation}\nonumber
    \int_{x^-}^{x^+} \varphi(t)(\Delta f)(t)\ud t = \int_{x^-}^{x^+} f(t)(\widetilde{\Delta}\varphi)(t)\ud t,
\end{equation}
where $\widetilde{\Delta} = \sum_{i=0}^k (-1)^i a_i\partial^i$. By definition of $\varphi$ and the condition that $\Delta f\geq 1$, we have 
\begin{equation}\nonumber
    \int_{(x)} \varphi(t)(\Delta f)(t)\ud t\gg (x^+ -x^-),
\end{equation} whereas 
\begin{equation}\nonumber
    \int_{(x)} f(t)(\widetilde{\Delta}\varphi )(t)\ud t\ll \frac{\varepsilon}{(x^+-x^-)^{k-1}},
\end{equation} since $\lvert f\rvert<\varepsilon$ on $(x)$, and $\varphi^{(i)}\ll (x^+-x^-)^i$ by property (iii) and the chain rule. It follows that $x^+ - x^-\ll \varepsilon^{1/k}$. Given that there are finitely many zeros of $f$ (uniformly in $f$), this finishes the proof of the lemma.\end{proof}

\section{Proofs of Theorem \ref{thm5} and Corollary \ref{mahler-quant}}

\subsection{Proof of Theorem \ref{thm5}} 

The main difficulty in proving Theorem \ref{thm5} is that the functional on $\mathscr{C}[0,1]$ defined by $\ell(f)=\int_{0}^1\log \abs{f(t)}\ud t$ is not continuous, so our previous distributional result does not directly apply.
We instead consider 
$$\ell(f) = \ell_{\varepsilon}(f) + \int_{0}^{1}\log \abs{f(t)}(1-w_{\varepsilon}(\lvert f(t)\rvert))\ud t,$$ where
$$\ell_{\varepsilon}(f)=\int_{0}^{1}\log \abs{f(t)}w_{\varepsilon}(\lvert f(t)\rvert)\ud t$$ with $w_{\varepsilon}(t)=\rho\left(\frac{t}{\varepsilon} - 1\right)$ (recall the function $\rho$ in \eqref{eq:rho}) a smoothed minorant of $\mathds 1_{t\geq\varepsilon}$, is a continuous functional on $\mathscr{C}[0,1]$. Roughly speaking, Theorem \ref{thm5} will follow from applying Theorem \ref{thm:random-quadratic-process} to a bounded version of $\ell_{\varepsilon}(f)$, and Proposition \ref{logThm} to $\ell(f) - \ell_{\varepsilon}(f)$ using the fact that $1-w_{\varepsilon}(t)$ majorizes $\mathds 1_{t<\varepsilon}$, where $f=F_{k, \chi_q, \alpha, \beta}$ with $\alpha=0.2$ and $\beta =1.1$. Indeed, considering a bounded version will be enough, for we will show that $\ell_{\varepsilon}(F_{k, \chi_q, \alpha, \beta})$ takes large values with small probability; see Lemma \ref{lemma:logLarge} below.

We start with the following lemma to handle $\ell(f) - \ell_{\varepsilon}(f)$. 
\begin{lemma}\label{lemmaLogSmall}
    For $\alpha=0.2$ and $\beta = 1.1$, for all large enough primes $q$ and all
    $k \in \{0, 1, \ldots, q-1\}$ we have
\[
    \int_0^1 \log \abs{F_{k,\chi_q,\alpha,\beta}(t)}\mathds{1}_{\abs{F_{k,\chi_q,\alpha,\beta}(t)}
        < \varepsilon} \ud t \ll \varepsilon^{1/6}
\]
    as $\varepsilon \rightarrow 0$, where $\chi_q$ is the quadratic character mod $q$.
\end{lemma}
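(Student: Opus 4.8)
The plan is to deduce this from Proposition~\ref{logThm}, applied on each member of a \emph{fixed} finite partition of $[0,1]$ with a third‑order operator, which is exactly what produces the exponent $\frac16$. Write $F := F_{k,\chi_q,\alpha,\beta}$; being a finite exponential sum, $F$ is smooth. Since $|F(t)|\ge\max(|\Re F(t)|,|\Im F(t)|)$ we have $\{\,|F|<\varepsilon\,\}\subseteq\{\,|\Re F|<\varepsilon\,\}$, and on the latter set (once $\varepsilon<1$) the integrand $\log|\Re F|$ is negative, so enlarging the indicator only decreases it:
\[
    \log|F(t)|\,\mathds 1_{|F(t)|<\varepsilon}\ \ge\ \log|\Re F(t)|\,\mathds 1_{|\Re F(t)|<\varepsilon},
\]
and likewise with $\Im F$ in place of $\Re F$. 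It therefore suffices to produce finitely many third‑order linear differential operators with constant real coefficients (depending only on $\alpha,\beta$) and a fixed finite cover $[0,1]=I_1\cup\dots\cup I_N$, with $N$ and the $I_j$ independent of $q$ and $k$, such that on each $I_j$ one of $\pm\Re(\Delta F),\ \pm\Im(\Delta F)$ is $\ge 1$ throughout $I_j$ for the operator $\Delta$ assigned to $I_j$. Granting this, Proposition~\ref{logThm} applied on $I_j$ to the appropriate one of $\pm\Re F,\pm\Im F$ with $\pm\Delta$ gives $\int_{I_j}\log|F|\,\mathds 1_{|F|<\varepsilon}\ge -C_j\varepsilon^{1/6}$, and summing over the finitely many $j$ proves the lemma.

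To build the operators I would use the Poisson representation of Proposition~\ref{prop:poisson}, written as $F(t)=\frac{\mu}{2\pi i}\sum_{l\in\Z}\frac{\psi(l+t)}{l+t}\,\overline{\chi_q}(k-l)$ with $\psi(u)=e((\alpha+\beta)u)-e(\alpha u)$ and $|\mu|=1$. This series is only conditionally convergent, the obstruction coming from the two ``slow'' pieces $e((\alpha+\beta)u)/u$ and $e(\alpha u)/u$, whose symbols under $\partial$ are the purely imaginary numbers $2\pi i(\alpha+\beta)$ and $2\pi i\alpha$. A real third‑order operator whose symbol vanishes at $0$ and at $\pm 2\pi i\alpha$ — for instance $\partial(\partial^2+4\pi^2\alpha^2)$ — annihilates the leading $1/u$‑behaviour of the $e(\alpha u)/u$ piece, so that applying it to $F$ turns the bulk of the $l$‑sum into an absolutely convergent series that is uniformly $O_{\alpha,\beta}(1)$, leaving the $e((\alpha+\beta)u)/u$ piece with a nonzero real multiple; replacing $\alpha$ by $\alpha+\beta$ does the opposite. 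The gain from passing to these ``reduced'' series is that the resulting expression for $\Re(\Delta F)$ or $\Im(\Delta F)$ can be analysed uniformly in $\chi_q$, since the erratic part has been made absolutely convergent and of bounded size.

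The main obstacle is then to show that, on a fixed cover, one of $\pm\Re(\Delta F),\pm\Im(\Delta F)$ stays $\ge 1$ throughout each piece, uniformly in $q$, $k$ and $\chi_q$. This is genuinely delicate: $F$, and hence $\Delta F$, is a trigonometric polynomial of degree $\asymp q$, so there is no equicontinuity to exploit and a generic such function oscillates on scale $1/q$ — so the inequality is a real structural constraint (by Lemma~\ref{prop1} it forces $\Re F$ or $\Im F$ to have boundedly many zeros on each $I_j$). Here the concrete values $\alpha=.2,\ \beta=1.1$ are used. For the pieces of the cover adjacent to $t=0$ and $t=1$ one can use the splitting $F_{k,\chi_q,.2,1.1}(t)=F_{k,\chi_q,.2,.8}(t)+e(t-.2k)\,F_{k,\chi_q,0,.3}(t)$ together with the Montgomery‑type cosecant identity for complete sums recalled in the remark after Proposition~\ref{prop:poisson}, the relevant evaluations ultimately coming down to the unimodular Gauss‑sum values $\sum_{n\bmod q}\chi_q(n)e(nk/q)=\overline{\chi_q}(k)\tau(\chi_q)$; the remaining interior pieces are handled by a direct estimate on $\Re(\Delta F)$ and $\Im(\Delta F)$ tailored to these parameters, in which a finite numerical check that the relevant fixed quantities stay away from $0$ on the closed interior intervals is convenient. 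Assembling the finitely many pieces and invoking Proposition~\ref{logThm} (third order, hence exponent $\frac16$) on each gives the stated bound $\ll\varepsilon^{1/6}$.
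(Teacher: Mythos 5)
Your top-level strategy coincides with the paper's: factor the problem through Proposition \ref{logThm} with a third-order operator (hence the exponent $1/6$), reduce to a differential inequality for the real or imaginary part on a fixed finite cover of $[0,1]$, and verify that inequality by a computation with explicit constants. However, the execution contains a genuine gap at exactly the point you flag as the ``main obstacle.'' A real constant-coefficient cubic operator applied directly to $F$ cannot annihilate the slowly decaying parts of \emph{both} exponential pieces $e(\alpha u)/u$ and $e((\alpha+\beta)u)/u$, since its symbol would have to vanish at the four points $\pm 2\pi i\alpha,\ \pm 2\pi i(\alpha+\beta)$. Your choice $\partial(\partial^2+4\pi^2\alpha^2)$ therefore leaves behind a nonzero multiple of the conditionally convergent series $\sum_{l}\frac{e((\alpha+\beta)(l+t))}{l+t}\overline{\chi_q}(k-l)$, and this surviving piece is precisely the erratic part: it depends on \emph{all} the values $\chi_q(k-l)$, it is not absolutely convergent, it fluctuates in sign, and (being essentially another normalized incomplete character sum) it has no uniform lower bound in $q$, $k$, $\chi_q$. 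So your assertion that ``the erratic part has been made absolutely convergent and of bounded size'' is false for this operator, and the claimed verification that one of $\pm\mathrm{Re}(\Delta F),\pm\mathrm{Im}(\Delta F)$ stays $\geq 1$ on fixed intervals cannot be reduced to any finite numerical check: the quantities involved are not ``fixed,'' and the hints you offer (the splitting $F_{k,\chi_q,.2,1.1}=F_{k,\chi_q,.2,.8}+e(t-.2k)F_{k,\chi_q,0,.3}$, Gauss-sum evaluations, Montgomery's identity) again produce character-dependent sums with no uniform lower bound.

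The paper's resolution of exactly this difficulty is to first discard the harmless unimodular factors: since $|F_{k,\chi_q,\alpha,\beta}(t)|=\frac{1}{2\pi}|A(t)|$ with $A(t)=\sum_{l}\frac{e(\alpha l)(e(\beta(l+t))-1)}{l+t}\overline{\chi_q}(k-l)$, one may work with $A$, whose $t$-frequencies are only $0$ and $\beta$. The single real cubic operator $4\pi^2\beta^2\partial+\partial^3=\partial(\partial^2+4\pi^2\beta^2)$ annihilates \emph{both} of these, so $\Delta A$ is given by an absolutely convergent series with terms $O\bigl(1/(l+t)^2\bigr)$. Only then does the finite verification become possible: truncating at $|l|\le M$ with an explicit numerical tail bound, the inequality reduces to finitely many sign patterns $\chi_q(k-l)\in\{0,\pm1\}$ ($32$ cases with $M=2$ for $k\notin\{0,q-1\}$, and $512$ cases with $M=28$, exploiting multiplicativity, in the degenerate cases where the dominant $l=0$ or $l=-1$ term vanishes) — a case analysis your proposal has no analogue of, but which is indispensable because the differential inequality is not automatic even after absolute convergence is secured. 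Your preliminary reduction via $\log|F|\,\mathds 1_{|F|<\varepsilon}\ge\log|\mathrm{Re}\,F|\,\mathds 1_{|\mathrm{Re}\,F|<\varepsilon}$ and the bookkeeping with Proposition \ref{logThm} are fine; the missing idea is the removal of the $e(\alpha t)$ phase so that one real third-order operator kills both slow modes, together with the explicit truncation-plus-case-check that turns the uniform lower bound into a finite computation.
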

\begin{proof}
     Recall, that for a Dirichlet character $\chi$ mod $q$ and for $\alpha,\beta$
    such that $\alpha q$ and $(\alpha + \beta)q$ are not integers,
\begin{equation}\nonumber
    F_{k,\chi,\alpha,\beta}(t)
        = e(\alpha t)\frac{\tau(\chi)}{2\pi i q^{1/2}}\sum_{l \in \Z}
    \frac{e(\alpha l) (e(\beta(l + t)) - 1)}
    {l + t}
    \chibar(k-l).
\end{equation}

The only way we know how to show that this integral is well-behaved is via some
explicit numerical calculation that considers all possible values of $\chi_q(k
- l)$ for small $l$. The (ad-hoc) arguments used by \cite{KLM-fekete} for Fekete polynomials take advantage of two features
that we do not have at our disposal: they were able to deal with real-valued
functions using  the intermediate value theorem and most importanly, the derivatives of their series were absolutely convergent,
making the numerical computation (separation from zero value) feasible.

To get around these difficulties, we instead consider a linear combination
of derivatives of our function. Write
\[
    A(t) =
    \sum_{l \in \Z}
    \frac{e(\alpha l) (e(\beta(l + t)) - 1)}
    {l + t }
    \chibar(k-l).
\]
Then
\[
    4\pi^2\beta^2 A'(t) + A'''(t) =
    \sum_{l \in \Z} \frac{\chibar(k-l)}{(l+t)^4} a(l,t),
\]
where
\begin{multline*}
    a(l,t) = 
    \bigg[ 12 \pi \beta(l + t)e(\beta t)  e((\alpha + \beta)l) \left(\pi \beta (l+t) + i\right)
        \\
    - e(\alpha l)(e(\beta(l+t)-1)\left(6 + 4\pi^2\beta^2(l+t)^2\right) \bigg].
\end{multline*}
(Here $a(l,t)/(l + t)^4$ becomes $-4 \pi^4 \beta^4$ when $l = t = 0$
and $-4\pi^4\beta^4e(-\alpha)$ when $l=-1$ and $t = 1$.)
This is now something that is given by an absolutely convergent series.
In particular, if we limit the sum to $l \in [-M+1, M]$, say, then the
truncation error will be at most
\[
\left(40 \pi^2 \beta^2 \right)\left(\zeta(2)
- \sum_{m < M}\frac{1}{m^2}\right)
+ 24 \pi \beta \left(\zeta(3) - \sum_{m < M}\frac{1}{m^3}\right)
+ 24 \left(\zeta(4) - \sum_{m < M}\frac{1}{m^4}\right).
\]

When $k \not \in \{0, q-1\}$, we can take $M = 2$ so that
\[
    \abs{4\pi^2\beta^2 A'(t) + A'''(t) - 
    \sum_{l = -2}^1 \frac{\chibar(k-l)}{(l+t)^4} a(l,t)}
    < 326.9.
\]
There are 32 different possible values of the sequence
$\chi(k-1), \chi(k), \chi(k+1), \chi(k+2)$ (the first or the last term might
be zero, but not the middle two), and we check that for each possible choice
we can cover the interval $[0,1]$ into 4 separate intervals such that
on each interval, either $\abs{\real(4\pi^2\beta^2 A'(t) + A'''(t))} > 1$
or $\abs{\imag(4\pi^2\beta^2 A'(t) + A'''(t))} > 1$ across the entire
interval.

The terms with $l = -1$ and $l=0$ are the largest summands, so when one
of them vanishes we have to work a bit harder. Now in order to make the
computation tractable we use the symmetry and multiplicativity of $\chi_q$.
In the worst case, when $k=0$ and $\chi$ is an even character, we take $M =
28$, so that the error bound is 17.5. Using multiplicativity, we only need
to check 512 different possibilities for the sequence $\chi(-28), \chi(-27),
\ldots, \chi(27)$, rather than the $2^{56}$ possibilities we would have to
check naively.

In any case, we find that we can always cover $[0,1]$ by five intervals such that
on each interval, either $\abs{\real(4\pi^2\beta^2 A'(t) + A'''(t))} > 1$ or
$\abs{\imag(4\pi^2\beta^2 A'(t) + A'''(t))} > 1$ across the entire interval.

We are now in a position to appeal to Proposition \ref{logThm}. We have
\begin{align*}
    \int_0^1 \log \abs{F_{k,\chi_q,\alpha,\beta}(t)}\mathds{1}_{\abs{F_{k,\chi_q,\alpha,\beta}(t)}
    < \varepsilon} \ud t
        &= \int_0^1 \log \abs{A(t)}\mathds{1}_{\abs{A(t)} < 2 \pi \varepsilon} \ud t \\
        &= \sum_{j} \int_{I_j} \log \abs{A(t)}\mathds{1}_{\abs{A(t)} < 2 \pi \varepsilon} \ud t
\end{align*}
for any disjoint collection of intervals $I_j$ that cover $[0,1]$, and when
$\varepsilon$ is small enough
\begin{multline*}
    \sum_{j} \abs{\int_{I_j} \log \abs{A(t)}\mathds{1}_{\abs{A(t)} < 2 \pi \varepsilon} \ud t}
\le \\
\sum_{j} \min \left(
    \abs{\int_{I_j} \log \abs{\real(A(t))}\mathds{1}_{\abs{\real(A(t))} < 2 \pi \varepsilon} \ud t},
    \abs{\int_{I_j} \log \abs{\imag(A(t))}\mathds{1}_{\abs{\imag(A(t))} < 2 \pi \varepsilon} \ud t}\right).
\end{multline*}
By the computation described above, we can split the interval $[0,1]$ into a
finite union of disjoint intervals (independent of $k$ and $\chi$) such
that on each interval, either the real part or the imaginary part of $A(t)$
satisfies the given linear differential inequality, and hence
\begin{multline*}
\min \left(
    \abs{\int_{I_j} \log \abs{\real(A(t))}\mathds{1}_{\abs{\real(A(t))} < 2 \pi \varepsilon} \ud t},
    \abs{\int_{I_j} \log \abs{\imag(A(t))}\mathds{1}_{\abs{\imag(A(t))} < 2 \pi \varepsilon} \ud t}\right)
    \ll
    \varepsilon^{1/6}
\end{multline*}
as $\varepsilon \rightarrow 0$ by Proposition \ref{logThm}, and the same bound
holds for the sum over all of the intervals.
\end{proof}

\begin{remark}
    In principle, it should be possible to carry out this procedure
    for any parameters $\alpha$ and $\beta$ as long as $4\pi^2\beta^2 A'(t) +
    A'''(t)$ does not vanish on $[0,1]$ for any realization of the
    coefficients. However, we are somewhat lucky here; there are
    certain values of $\alpha$ and $\beta$ where this quantity can be very
    small, and perhaps it may even vanish. In this case, it
    is possible to verify higher order differential inequalities to
    accomplish our goals. This will be explored in future work.
\end{remark}

Now we have 
\begin{multline}\label{eq:smallandlarge}
     \int_0^1 \log\abs{ \frac{1}{(1.1q)^{1/2}} S(\chi_q, 0.2, 1.1, \theta)}\ud\theta = \\  \mathbb E \int_0^1 \log\left\lvert\frac{1}{1.1^{1/2}}G_{q, 0.2, 1.1}(t)\ \right\rvert(w_{\varepsilon}(\lvert G_{q, 0.2, 1.1}(t)\rvert) + (1-w_{\varepsilon}(\lvert G_{q, 0.2, 1.1}(t)\rvert))\ud t.
\end{multline} 

\begin{lemma}\label{lemma:logLarge}
    For any small enough $\varepsilon>0$, we have 
    \begin{multline*}
        \mathbb E \int_0^1 \log\left\lvert\frac{1}{1.1^{1/2}}G_{q, 0.2, 1.1}(t)\ \right\rvert w_{\varepsilon}(\lvert G_{q, 0.2, 1.1}(t)\rvert)\ud t\longrightarrow \\  \mathbb E\int_0^1 \log \abs{\frac{1}{1.1^{1/2}} G_{0.2, 1.1}(t)}w_{\varepsilon}(\lvert G_{0.2, 1.1}(t)\rvert).
    \end{multline*}
\end{lemma}

\begin{proof}
    Note that for every realization of $G_{q, 0.2, 1.1}$, we have 
    \begin{equation*}
        \left(\int_0^1 \log\left\lvert\frac{1}{1.1^{1/2}}G_{q, 0.2, 1.1}(t) \right\rvert w_{\varepsilon}(\lvert G_{q, 0.2, 1.1}(t)\rvert)\ud t\right)^2\ll_{\varepsilon} \int_0^1 \left\lvert G_{q, 0.2, 1.1}(t) \right\rvert^2 \ud t
    \end{equation*}
    since Mahler measure is bounded above by the $L_2$ norm, whereas for the lower bound it suffices to take the implied constant to be $(\log(1/\varepsilon)/\varepsilon)^2$ since the integrand is supported on $\lvert G_{q, 0.2, 1.1}(t)\rvert\gg \varepsilon$.
    Taking expectation of both sides of the above inequality, considering that $\varepsilon>0$ is fixed, we obtain 
    \begin{equation*}
        \mathbb E\left(\int_0^1 \log\left\lvert\frac{1}{1.1^{1/2}}G_{q, 0.2, 1.1}(t)\ \right\rvert w_{\varepsilon}(\lvert G_{q, 0.2, 1.1}(t)\rvert)\ud t\right)^2\ll 1,
    \end{equation*} since
    \begin{align*}
        \mathbb E\int_0^1 \left\lvert G_{q, 0.2, 1.1}(t) \right\rvert^2 \ud t & = \frac{1}{q}\int_0^1 \sum_{\alpha q<n_1, n_2\leq (\alpha+\beta)q} \left( \frac{n_1n_2}{q}\right) \mathbb Ee\left(\frac{k+t}{q}(n_1-n_2) \right)\ud t \\ & = \frac{1}{q}\int_0^1 \sum_{\substack{\alpha q <n_1, n_2\leq (\alpha+\beta)q \\ q\mid n_1-n_2}} e\left(\frac{t(n_1-n_2)}{q} \right)\ud t
        \\ & \ll 1,
    \end{align*}
    since only the terms $n_1=n_2$ give a non-zero contribution, and there are $\ll \beta q$ such terms. Markov's inequality gives 
    \begin{equation*}
        \mathbb P \left(\int_0^1 \log\left\lvert\frac{1}{1.1^{1/2}}G_{q, 0.2, 1.1}(t) \right\rvert w_{\varepsilon}(\lvert G_{q, 0.2, 1.1}(t)\rvert)\ud t\geq M  \right)\ll \frac{1}{M^2}.
    \end{equation*}
    Therefore for any large $M>0$ we have 
    \begin{multline*}
        \mathbb E \int_0^1 \log\left\lvert\frac{1}{1.1^{1/2}}G_{q, 0.2, 1.1}(t)\right\rvert w_{\varepsilon}(\lvert G_{q, 0.2, 1.1}(t)\rvert)\ud t 
        \\ = \mathbb E \min\left\{\int_0^1 \log\left\lvert\frac{1}{1.1^{1/2}}G_{q, 0.2, 1.1}(t) \right\rvert w_{\varepsilon}(\lvert G_{q, 0.2, 1.1}(t)\rvert)\ud t, M \right\} + O\left(\frac{1}{M} \right). 
    \end{multline*}
    Applying Theorem \ref{thm:random-quadratic-process} to the bounded continuous functional $$f\mapsto \min\left\{\int_0^1 \log\left\lvert\frac{1}{1.1^{1/2}}f(t) \right\rvert w_{\varepsilon}(\lvert f(t)\rvert)\ud t, M \right\}$$ and letting $M\to\infty$ (with fixed $\varepsilon>0$) together with an application of the monotone convergence theorem yields the desired result. 
\end{proof}
Next, returning to \eqref{eq:smallandlarge}, using Lemmas \ref{lemma:logLarge} and \ref{lemmaLogSmall} together with the fact that $1-w_{\varepsilon}(t)$ majorizes $\mathds 1_{t<\varepsilon}$ (note that $\log\lvert t\rvert$ is negative for small enough $t$), we have
\begin{multline*}
    \int_0^1 \log\abs{ \frac{1}{(1.1q)^{1/2}} S(\chi_q, 0.2, 1.1, \theta)}\ud\theta
        \longrightarrow 
        \\ \mathbb E\int_0^1 \log \abs{\frac{1}{1.1^{1/2}} G_{0.2, 1.1}(t)}w_{\varepsilon}(\lvert G_{0.2, 1.1}(t)\rvert)\ud t+O(\varepsilon^{1/6}).
\end{multline*}
Letting $\varepsilon\to 0$ and using the fact that $w_{\varepsilon}(t)\to 1$ for every $t>0$ monotonically as $\varepsilon\to 0$, we have 
\begin{equation}\nonumber
    \int_0^1 \log\abs{\frac{1}{(1.1q)^{1/2}} S(\chi_q, 0.2, 1.1, \theta)}\ud\theta
        \longrightarrow \mathbb E\int_0^1 \log \abs{\frac{1}{1.1^{1/2}} G_{0.2, 1.1}(t)}\ud t.
\end{equation} 
The exponential of the latter expectation may be computed to be equal to $0.954\ldots$.

\subsection{Proof of Corollary \ref{mahler-quant}}
We follow exactly the same steps as in the previous subsection. Let 
\begin{equation}\nonumber
    H^{\pm}_{k, q}(t) = F_{k, \chi_q, 0.2, 1.1}(t) \pm \frac{e(-0.2k+t)}{q^{1/2}},
\end{equation}
and let $H^{\pm}_q(t)$ be the random process by choosing $k\pmod q$ uniformly at random. First, we have the following lemma as before.

\begin{lemma}\label{lemmaLogSmall2}
    For $\alpha=0.2$ and $\beta = 1.1$, for all large enough primes $q$ and all
    $k \in \{0, 1, \ldots, q-1\}$ we have
\[
    \int_0^1 \log \abs{H_{k, q}^{\pm}(t)}\mathds{1}_{\abs{H_{k, q}^{\pm}(t)}
        < \varepsilon} \ud t \ll \varepsilon^{1/6}
\]
    as $\varepsilon \rightarrow 0$, where $\chi_q$ is the quadratic character mod $q$.
\end{lemma}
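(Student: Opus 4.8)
The plan is to mirror the proof of Lemma \ref{lemmaLogSmall} almost verbatim, the only difference being that the relevant series has an extra rank-one term coming from $\pm e(-.2k+t)/q^{1/2}$. Concretely, write
\[
    B^{\pm}(t) = A(t) \pm 2\pi i\, e(t),
\]
so that $H^{\pm}_{k,q}(t) = e(\alpha t)\frac{\tau(\chi_q)}{2\pi i q^{1/2}} B^{\pm}(t)$ up to the usual unimodular factors (here I am using the notation $A(t)$ introduced in the proof of Lemma \ref{lemmaLogSmall}). Applying the differential operator $\Delta = 4\pi^2\beta^2\partial + \partial^3$, I first observe that $\Delta$ annihilates the constant function and sends $e(t)$ to $(4\pi^2\beta^2 \cdot 2\pi i - (2\pi i)^3)e(t) = 2\pi i(4\pi^2\beta^2 + 4\pi^2)e(t)$, a fixed vector of absolute value $8\pi^3(\beta^2+1) = 8\pi^3(2.21)$. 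So
\[
    \Delta B^{\pm}(t) = \Delta A(t) \pm 2\pi i \cdot 8\pi^3(\beta^2+1)\, e(t),
\]
and $\Delta A(t)$ is exactly the absolutely convergent series $\sum_l \chibar(k-l)a(l,t)/(l+t)^4$ analyzed before, with the same truncation-error bounds. Thus $\Delta B^{\pm}(t)$ differs from the explicit finite sum over $l\in[-M+1,M]$ by the same quantity as in Lemma \ref{lemmaLogSmall} (the added term is exact, not truncated), so the same numerical verification applies — just run the check over the $2^{\,4}$ or $2^{\,56}$ (respectively $512$) sign patterns of $\chi_q$ with the extra fixed shift $\pm 16\pi^4(\beta^2+1)e(t)$ included in the leading finite sum.

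Having established that $[0,1]$ can be covered by a bounded number (uniformly in $k$ and $\chi_q$) of fixed intervals on each of which either $|\Re(\Delta B^{\pm})| > 1$ or $|\Im(\Delta B^{\pm})| > 1$, I invoke Proposition \ref{logThm} with $k=3$ exactly as in the proof of Lemma \ref{lemmaLogSmall}: on each such interval one of $\Re B^{\pm}$, $\Im B^{\pm}$ satisfies the hypothesis of Proposition \ref{logThm} (after dividing by the constant lower bound), so
\[
    \int_{I_j} \log|B^{\pm}(t)|\,\mathds 1_{|B^{\pm}(t)|<2\pi\varepsilon}\,\ud t \ll \varepsilon^{1/6},
\]
and summing over the finitely many intervals and translating back from $B^{\pm}$ to $H^{\pm}_{k,q}$ (the unimodular prefactor only shifts the threshold by a bounded constant) gives the claimed bound.

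The main obstacle, as in Lemma \ref{lemmaLogSmall}, is purely the finite numerical verification that the covering into intervals with $|\Re(\Delta B^{\pm})|>1$ or $|\Im(\Delta B^{\pm})|>1$ actually exists for \emph{every} admissible coefficient pattern — in particular for the degenerate cases $k\in\{0,q-1\}$ where the $l=0$ or $l=-1$ term of $A$ drops out and one must take $M$ large (around $28$) and exploit the multiplicativity of $\chi_q$ to reduce from $2^{56}$ to $512$ cases. One should check that adding the fixed term $\pm 16\pi^4(\beta^2+1)e(t)$ does not cause $\Delta B^{\pm}$ to vanish somewhere on $[0,1]$ for some realization; since this added vector has modulus roughly $16\pi^4(2.21)\approx 3.4\times 10^3$, comfortably larger than the truncation errors involved, there is ample room, and if a borderline pattern did arise one could fall back on a higher-order differential inequality as noted in the remark following Lemma \ref{lemmaLogSmall}. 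Everything else is identical to the already-completed argument.
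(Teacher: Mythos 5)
There is a genuine gap, and it stems from a mis-normalization of the extra term. You write $B^{\pm}(t) = A(t) \pm 2\pi i\, e(t)$, i.e.\ you treat the perturbation of $A$ as a term of modulus $2\pi$. But $H^{\pm}_{k,q}(t) = F_{k,\chi_q,\alpha,\beta}(t) \pm e(-.2k+t)/q^{1/2}$, and dividing by the prefactor $e(\alpha t)\tau(\chi_q)/(2\pi i q^{1/2})$ (of modulus $1/2\pi$, since $\abs{\tau(\chi_q)} = q^{1/2}$) turns the added term into $\pm 2\pi i\, e(-.2k+(1-\alpha)t)/\tau(\chi_q)$, of modulus $2\pi q^{-1/2}$, not $2\pi$: your identity is off by a factor of $q^{1/2}$. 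Because of this, your plan commits you to redoing the entire numerical verification of Lemma \ref{lemmaLogSmall} with a large rotating term added to $\Delta A$ (incidentally, since $\Delta = 4\pi^2\beta^2\partial + \partial^3$ and $(2\pi i)^3 = -8\pi^3 i$, one has $\Delta e(t) = 8\pi^3 i(\beta^2-1)e(t)$, so the factor is $\beta^2-1 = .21$, not $\beta^2+1$), and the justification you offer --- that the added vector is large, so ``there is ample room'' --- is not valid: a term proportional to $e(t)$ runs through every phase as $t$ traverses $[0,1]$ and could cancel against the main terms (which are themselves of size several hundred) for some coefficient patterns, so the covering of $[0,1]$ by intervals on which $\abs{\real(\Delta B^{\pm})}>1$ or $\abs{\imag(\Delta B^{\pm})}>1$ would genuinely have to be re-established, and you never do this.

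The correct, and much simpler, route --- which is the paper's proof --- uses exactly the smallness your normalization discarded: $y(t) = e(-.2k+t)/q^{1/2}$ satisfies $y^{(l)}(t) = O_l(q^{-1/2})$ uniformly in $t$ and $k$, so after normalization the perturbation and its first three derivatives are $O(q^{-1/2})$, and for all sufficiently large primes $q$ the same five intervals and the same differential inequalities already verified in Lemma \ref{lemmaLogSmall} hold verbatim for $H^{\pm}_{k,q}$; Proposition \ref{logThm} then finishes exactly as before, with no new numerics. Note that this is precisely where the hypothesis ``for all large enough primes $q$'' enters --- a hypothesis your argument never uses, which is a symptom of the perturbation having been assigned the wrong size.
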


\begin{proof}
The proof is identical to the proof of Lemma \ref{lemmaLogSmall}, provided we can show the additional term $\pm e(-0.2k+t)/q^{1/2}$ does not affect the lower bounds in our differential inequalities significantly. In fact, we will show that for the same five intervals covering $[0, 1]$ as in the previous subsection, in each interval either the real part or the imaginary part of $H^{\pm}_{k, p}$ satisfies exactly the same linear differential inequality. It is clear that it suffices to show that the first three derivatives (in $t$) of $y(t) = e(-0.2k+t)/q^{1/2}$ are small. Indeed, we uniformly have $y^{(l)}(t) = O_l(1/q^{1/2})$ for all $l\in\mathbb N$. We then apply Proposition \ref{logThm} and finish in exactly the same way as before.
\end{proof}
Note that for the continuous functional $\ell_{\varepsilon}$, the additional term $e(-0.2k+t)/q^{1/2}$ converges to $0$ uniformly in $t$, therefore we also have 
$$
\lim_{q\to\infty}\mathbb E\int_{0}^{1}\log |{H}^{\pm}_{ q}(t)|\mathds 1_{\lvert{H}^{\pm}_{ q}(t)\rvert\ge \varepsilon}\ud t=\mathbb{E}\int_{0}^{1}\log |{G_{0.2, 1.1}}(t)|\mathds 1_{|{G_{0.2, 1.1}}(t)|\ge \varepsilon}\ud t.
$$ We conclude in exactly the same way as in the previous subsection, i.e. we have 
\begin{equation}\label{eqH_pconv}
    \lim_{q\to\infty}\mathbb E\int_{0}^{1}\log \abs{\frac{1}{1.1^{1/2}}{H}^{\pm}_{ q}(t)}\ud t
        = \mathbb{E}\int_{0}^{1}\log \abs{\frac{1}{1.1^{1/2}}{G_{0.2, 1.1}}(t)}\ud t.
\end{equation}
To finish the proof of Corollary \ref{mahler-quant}, note that the left-hand side of \eqref{eqH_pconv} is the limit of the normalized logarithmic Mahler measures of the Littlewood polynomials $$\sum_{n=1}^{\lfloor 1.1q\rfloor}\chi(n+\lfloor 0.2q\rfloor)x^{n-1} \pm x^{q-\lfloor0.2q\rfloor-1}$$ (upon noticing that Mahler measure is invariant under multiplication by powers of $x$). 

\section{Proof of Theorem \ref{GSP_extremal}}

In this section, we prove a conjecture of G\"unther and Schmidt \cite{GuentherSchmidt2017}, on the minimum $L_{2k}$ norm of the Turyn polynomial
\begin{equation}
    F_{q, a}(t) := S\left( \chi_q, \frac{a}{q}, 1, t \right) = \sum_{n\leq q} \left(\frac{n+a}{q}\right)e(nt).\nonumber
\end{equation}
As discussed in the introduction, in \cite{GuentherSchmidt2017} it was shown that for every $k\in\mathbb N$, there is a function $\phi_k:\mathbb R\to\mathbb R$ such that
\begin{equation}\nonumber
    \lim_{q\to\infty} \frac{1}{\sqrt q}\|F_{q, a}\|_{2k}=\phi_k(\alpha) 
\end{equation}
when $a/q\to \alpha$. We will show that $\phi_k(\alpha)$ achieves its minimum
at $\alpha = 1/4$ for every $k\in\mathbb N$. We start with the following lemma. 
\begin{lemma}
    We have
\begin{equation}\label{eq:phi_k}
    \phi_{k}(\alpha)^{2k} = \frac{1}{(2\pi)^{2k}}
    \int_0^1
        \mathbb E\left\lvert(e(t)-1)\sum_{m\in\mathbb Z}
        \frac{e(m\alpha)}{m+t}\bY(m)\right\rvert^{2k}\ud t, 
\end{equation}
where the $\bY(m)$ are $\pm 1$-valued uniform random variables.
\end{lemma}
\begin{proof}
    Let $G_{q,\alpha} = G_{q,\alpha,1}$.
By definition of $\phi_k$ and $G_{q, \alpha}$, we have
\begin{equation*}
    \phi_k(\alpha)^{2k} = \lim_{q\to\infty} \mathbb E \int_0^1\left\lvert G_{q, \alpha}(t) \right\rvert^{2k}\ud t.
\end{equation*}
Moreover, for any large $M>0$,
\begin{multline*}
\mathbb E \int_0^1\left\lvert G_{q, \alpha}(t) \right\rvert^{2k}\ud t = \mathbb E \min\left\{\int_0^1\left\lvert G_{q, \alpha}(t) \right\rvert^{2k}\ud t, M\right\}  \\ +  O\left(\sum_{l=0}^\infty 2^{l+1}M\mathbb P\left(2^lM\leq \int_0^1\left\lvert G_{q, \alpha}(t) \right\rvert^{2k}\ud t  \leq 2^{l+1}M\right) \right).
\end{multline*}
The probability in the error term is of course bounded by 
\[
    \mathbb P\left(\int_0^1\left\lvert G_{q, \alpha}(t) \right\rvert^{2k}\ud t \ge 2^lM  \right).
\]
Markov's inequality gives
\begin{align*}
    \mathbb P\left(\int_0^1\left\lvert G_{q, \alpha}(t) \right\rvert^{2k}\ud t \ge 2^lM \right)
    \leq \frac{\mathbb E \left(\int_0^1\left\lvert G_{q, \alpha}(t) \right\rvert^{2k}\ud t  \right)^2}{2^{2l}M^2}
    \ll_k \frac{1}{2^{2l}M^2}, 
\end{align*} 
since
\begin{equation*}
  \mathbb E \left(\int_0^1\left\lvert G_{q, \alpha}(t) \right\rvert^{2k}\ud t  \right)^2\leq \mathbb E\int_0^1 \lvert G_{q, \alpha}(t)\rvert^{4k}\ud t\ll_k 1.  
\end{equation*}
The last inequality follows, for instance, by the result of G\"unther and Schmidt that $\mathbb E\int_0^1 \lvert G_{q, \alpha}(t)\rvert^{4k}\ud t$ converges to $\phi_{2k}(\alpha)^{4k}$. We conclude that 
\begin{equation*}
    \mathbb E \int_0^1\left\lvert G_{q, \alpha}(t) \right\rvert^{2k}\ud t = \mathbb E \min\left\{\int_0^1\left\lvert G_{q, \alpha}(t) \right\rvert^{2k}\ud t, M\right\} + O\left(\frac{1}{M} \right).
\end{equation*}
Now, $f\mapsto \min\left\{\int_0^1\left\lvert f(t) \right\rvert^{2k}\ud t, M\right\}$
is a bounded continuous functional. Hence as $q\to\infty$, by Theorem
\ref{thm:random-quadratic-process} we have 
\begin{equation*}
    \phi_k(\alpha)^{2k} = \mathbb E\min\left\{\frac{1}{(2\pi)^{2k}}
    \int_0^1
        \left\lvert(e(t)-1)\sum_{m\in\mathbb Z}
        \frac{e(m\alpha)}{m+t}\bY(m)\right\rvert^{2k}\ud t, M\right\} + O\left(\frac{1}{M} \right).
\end{equation*}
Letting $M\to\infty$ and applying the monotone convergence theorem yields \eqref{eq:phi_k}.
\end{proof}

Expanding the expectation in \eqref{eq:phi_k}, the integral is equal to
\begin{multline*}
    \int_0^1
        \lvert e(t)-1\rvert^{2k}
        \sum_{m_i, n_i\in\mathbb Z}\Bigg(
            \frac{e(\alpha(m_1+\ldots +m_k - n_1-\ldots-n_k))}
                {(m_1+t)\cdots (m_k+t)(n_1+t)\cdots(n_k+t)} \times \\
            \mathbb E\big[\bY(m_1)\cdots \bY(m_k)\bY(n_1)\cdots \bY(n_k)\big]\Bigg)\ud t.
\end{multline*}
Note that $\mathbb E\big[\bY(m_1)\cdots \bY(m_k)\bY(n_1)\cdots \bY(n_k)\big]$ is non-zero if
and only if the $m_i, n_i$ are equal in pairs, in which case the expectation is
equal to $1$. If $m_i=n_j$ for some $i, j$, then $m_i - n_j$ will cancel inside
the argument of the exponential. However,  if the expectation is non-zero and
$m_i=m_j=m$ for some $i, j$, then necessarily we must also have $n_{s}=n_t=n$
for some $s, t$, meaning that that the terms that do not cancel inside the
argument of the exponential, are all of the form $2(m-n)$. As such, we find
that
\begin{multline*}
    \int_0^1
        \mathbb E\left\lvert(e(t)-1)\sum_{m\in\mathbb Z}
        \frac{e(m\alpha)}{m+t}\bY(m)\right\rvert^{2k}\ud t 
        = \\
    \int_0^1\lvert e(t)-1\rvert^{2k}
        \sum_{r=0}^{k}
        \sum_{\substack{m_{r+1}, \ldots, m_k\in\mathbb Z \\ n_{2r+1}, \ldots, n_k\in\mathbb Z}}
        \frac{\mathbb E\big[\bY(m_{2r+1})\cdots \bY(m_k)\bY(n_{2r+1})\cdots \bY(n_k)\big]}
                {(m_{2r+1}+t)\cdots (m_k+t)(n_{2r+1}+t)\cdots (n_k+t)}M_{2r,t}(\alpha)\ud t,
\end{multline*}
where 
\begin{equation*}
    M_{2r, t}(\alpha) = \sum_{\substack{m_1, \ldots, m_r\in\mathbb Z \\ n_1, \ldots, n_r\in\mathbb Z}}
        \frac{e(2\alpha(m_1+\cdots +m_r-n_1-\cdots-n_r))}
             {(m_1+t)^2\cdots (m_r+t)^2 (n_1+t)^2\cdots (n_r+t)^2}
        = \left\lvert\sum_{m\in\mathbb Z}\frac{e(2m\alpha)}{(m+t)^2}\right\rvert^{2r}.
\end{equation*}
The product $(m_{2r+1}+t)\cdots (m_k+t)(n_{2r+1}+t)\cdots (n_k+t)$
is always a square when the expectation does not vanish, so
each nonzero summand in the sum is positive. We will show that
$M_{2r, t}(\alpha)$ is minimized at $\alpha = 1/4$ for every $t \in (0,1)$, from which
the result will follow. To evaluate $M_{2r,t}$, we use the following 
integral representation.
\begin{lemma} For $t \in (0,1)$ we have
    \begin{equation}\label{eq:lerch-integral}
    \sum_{m\in\mathbb Z}\frac{e( m\theta)}{(m+t)^2}
        = \int_0^1\left(\frac{x^{t-1}}{1-e(\theta)x} + \frac{e(-\theta)x^{-t}}{1-e(-\theta)x}\right) \log \frac{1}{x}\dx.
    \end{equation}
\end{lemma}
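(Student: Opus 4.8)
The plan is to recognize the left-hand side as a Hurwitz-zeta / Lerch transcendent and to produce the integral on the right by the standard device of writing $(m+t)^{-2}$ as an integral. First I would recall the elementary identity $\frac{1}{w^2} = \int_0^1 x^{w-1}\log\frac{1}{x}\,\dx$, valid for $\Re w > 0$, which follows from differentiating $\int_0^1 x^{w-1}\,\dx = 1/w$ in $w$ (or from the substitution $x = e^{-u}$, turning the integral into $\int_0^\infty u e^{-wu}\,\du = 1/w^2$). The issue is that this only applies directly to the terms of the series with $m + t > 0$, i.e. $m \ge 0$, since for $m \le -1$ we have $m + t < 0$. So I would split the sum $\sum_{m \in \Z}$ into $\sum_{m \ge 0}$ and $\sum_{m \le -1}$, and in the latter reindex $m \mapsto -m$ (so $m \ge 1$), using $(-m+t)^2 = (m-t)^2$ and $e(-m\theta)$; now $m - t > 0$ for $m \ge 1$ and $t \in (0,1)$, so the same integral identity applies with $w = m - t$.

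Next I would substitute the integral representations termwise and interchange summation and integration, which is justified by absolute convergence: for fixed $x \in (0,1)$ the geometric-type series $\sum_{m \ge 0} e(m\theta) x^{m}$ and $\sum_{m \ge 1} e(-m\theta) x^{m-t}$ converge absolutely (since $|x| < 1$), and the extra factor $x^{t-1}\log\frac1x$ or $x^{-t}\log\frac1x$ is integrable on $(0,1)$ because $t \in (0,1)$ makes $x^{t-1}$ and $x^{-t}$ each integrable against $\log\frac1x$ near $0$; one can for instance dominate by $\sum_m x^{m}$ times these integrable weights and apply Fubini/Tonelli. Carrying out the two geometric sums gives
\begin{equation*}
\sum_{m \ge 0} e(m\theta) x^{m} = \frac{1}{1 - e(\theta) x}, \qquad
\sum_{m \ge 1} e(-m\theta) x^{m-t} = x^{-t}\sum_{m\ge 1} e(-m\theta) x^{m} = \frac{e(-\theta) x^{-t} \cdot x}{1 - e(-\theta) x}\cdot x^{-1},
\end{equation*}
and after inserting the weights $x^{t-1}$ (from the $m \ge 0$ piece, where $w = m+t$, contributing $x^{m+t-1}$) and $x^{-t}$ (from the $m \ge 1$ piece, where $w = m-t$, contributing $x^{m-t-1}$, and $\sum_{m\ge1} e(-m\theta)x^{m-1} = e(-\theta)/(1-e(-\theta)x)$) one obtains exactly the integrand $\frac{x^{t-1}}{1-e(\theta)x} + \frac{e(-\theta) x^{-t}}{1 - e(-\theta) x}$ multiplied by $\log\frac1x$, as claimed.

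The main obstacle is simply bookkeeping the exponents of $x$ correctly through the reindexing and making sure the interchange of sum and integral is legitimate near $x = 0$ (where the individual weights $x^{t-1}$ and $x^{-t}$ blow up but remain integrable); once the split into $m \ge 0$ and $m \le -1$ is made and the one-line integral identity $w^{-2} = \int_0^1 x^{w-1}\log\frac1x\,\dx$ is in hand, the rest is a routine geometric series evaluation. There is no analytic subtlety beyond absolute convergence, and no appeal to any earlier result in the paper is needed.
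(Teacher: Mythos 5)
Your proposal is correct and follows essentially the same route as the paper: both split the bilateral sum at $m=0$ (reindexing the negative part so that $(m+t)^2$ becomes $(n-t)^2$ with $n\ge 1$) and represent each term by an integral of $x^{w-1}\log\frac1x$ over $(0,1)$ before summing the resulting geometric series. The only difference is cosmetic: the paper quotes the classical integral representation of the Lerch transcendent on $(0,\infty)$ with $s=2$ and then substitutes $u=\log\frac1x$, whereas you derive the identity $w^{-2}=\int_0^1 x^{w-1}\log\frac1x\,\ud x$ directly and justify the interchange of sum and integral by Tonelli (which is indeed immediate, since $\sum_{m\ge 0}(m+t)^{-2}$ and $\sum_{n\ge 1}(n-t)^{-2}$ converge), giving a self-contained version of the same computation.
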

\begin{proof}
    This follows easily from an integral representation of the Lerch transcendent.
    From \cite{EMOT1}*{Chapter 1.11, Equation (3)} (see also \cite{NIST:DLMF}*{Formula 25.14.5})
    we have
    \[
        \sum_{m=0}^\infty \frac{z^m}{(m + t)^s}
             = \frac{1}{\Gamma(s)} \int_0^\infty \frac{x^{s-1}e^{-tx}}{1 - ze^{-x}}\ud x.
    \]
    This holds, for example, with $\real(s) > 1$ and $\real(t) > 0$ when $\abs{z} = 1$. Thus
\begin{align*}
    \sum_{m\in\mathbb Z}\frac{e(m\theta)}{(m+t)^2}
        &= \sum_{m=0}^\infty \frac{e(m\theta)}{(m+t)^2} +
            e(-\theta) \sum_{m=0}^\infty \frac{e(-m\theta)}{(m+1-t)^2} \\
        &=\int_0^\infty \left(\frac{ue^{-tu}}{1-e(\theta)e^{-u}}
        + e(-\theta)\frac{ue^{-(1-t)u}}{1-e(-\theta)e^{-u}}\right)\ud u.
\end{align*}
We now make the change of variables $u = \log \frac{1}{x}$.
\end{proof}

Now writing $\theta = 2\alpha$ and continuing from
\eqref{eq:lerch-integral}, we write the square of the absolute value as
the sum of the squares of the real and imaginary parts to get
\begin{multline*}
    \left\lvert\sum_{m\in\mathbb Z}\frac{e( m\theta)}{(m+t)^2}\right\rvert^2
        = \cos(2\pi \theta)^2\left( \int_0^1 \frac{(x^{-t}-x^{t}) + (x^{t-1} - x^{-t+1})}{1-2x\cos(2\pi \theta) + x^2}\log\frac1 x\dx\right)^2  \\
         + \sin(2\pi\theta)^2\left(\int_0^1 \frac{x^{-t}-x^{t}}{1-2x\cos(2\pi \theta) + x^2}\log\frac1 x\dx\right)^2.
 \end{multline*}

We now write the first integral as
\begin{multline*}
\left( \int_0^1 \frac{(x^{-t}-x^{t}) + (x^{t-1} - x^{-t+1})}{1-2x\cos(2\pi \theta) + x^2}\log\frac1 x\dx\right)^2 \\
    = 
    \left( \int_0^1 \frac{x^{-t}-x^{t}}{1-2x\cos(2\pi \theta) + x^2} \log\frac{1}{x} \ud x
    + \int_0^1\frac{x^{t-1} - x^{-t+1}}{1-2x\cos(2\pi \theta) + x^2}\log\frac1 x\dx\right)^2
\end{multline*}
and upon expanding this square and simplifying we arrive at
\begin{multline*}
    \left\lvert\sum_{m\in\mathbb Z}\frac{e( m\theta)}{(m+t)^2}\right\rvert^2
        = \left(\int_0^1 \frac{x^{-t}-x^{t}}{1-2x\cos(2\pi \theta) + x^2}\log\frac1 x\dx \right)^2\\
        + 2\cos(2\pi\theta)\int_0^1 \frac{x^{-t} - x^t}{1-2x\cos(2\pi\theta)+x^2}\log\frac1x\dx\int_0^1 \frac{x^{t-1} - x^{-t+1}}{1-2x\cos(2\pi\theta)+x^2}\log\frac1x \dx \\
        + \left(\int_0^1 \frac{x^{t-1} - x^{-t+1}}{1-2x\cos(2\pi\theta)+x^2}\log\frac1x \dx \right)^2.
\end{multline*}
Note that all the integrands are non-negative, as
$1 - 2x \cos(2\pi \theta) + x^2 \ge (1-x)^2.$ For every $t$,
the first and third
terms are minimized at $\theta = \frac{1}{2}$, as the denominator
is maximized there. To minimize the middle term, we can assume that
$\cos(2\pi\theta)<0$. Now, rewrite it as 
\begin{equation}\nonumber
    -2\int_0^1 \frac{x^{-t} - x^t}{\frac{1+x^2}{\sqrt{-\cos(2\pi\theta)}} - 2x\sqrt{-\cos(2\pi\theta)}}\log\frac1x\dx\int_0^1 \frac{x^{t-1} - x^{-t+1}}{\frac{1+x^2}{\sqrt{-\cos(2\pi\theta)}} - 2x\sqrt{-\cos(2\pi\theta)}}\log\frac1x \dx.
\end{equation}
We have to minimize the denominator of the integrals in $\theta$. The
derivative of the expression is equal to 
\begin{equation}\nonumber
\frac{\pi \tan(2 \pi \theta)(x^2 - 2 x\cos(2 \pi \theta) + 1)}{\sqrt{-\cos(2 \pi\theta)}},
\end{equation} from which it is clear that $\theta=\frac{1}{2}$ is the
minimizer (recall that $\cos(2\pi\theta)<0$, hence clearly $x^2-2x\cos(2\pi\theta)+1>0$). We
conclude that $M_{2r,t}(\alpha)$ is minimized at $\alpha = \frac{1}{4}$ for
every value of $t$, finishing the proof of Theorem \ref{GSP_extremal}.
\section*{Acknowledgements}

We are grateful to Yu Chen Sun, Neo Tardy and Victor Wang for fruitful conversations. B.Sh. is funded by a University of Bristol PhD scholarship.

\begin{bibdiv}
\begin{biblist}

\bib{beller-newman}{article}{
   author={Beller, E.},
   author={Newman, D. J.},
   title={An extremal problem for the geometric mean of polynomials},
   journal={Proc. Amer. Math. Soc.},
   volume={39},
   date={1973},
   pages={313--317},
   issn={0002-9939},
   review={\MR{0316686}},
   doi={10.2307/2039638},
}

\bib{BG-maximum}{article}{
   author={Bober, Jonathan W.},
   author={Goldmakher, Leo},
   title={The distribution of the maximum of character sums},
   journal={Mathematika},
   volume={59},
   date={2013},
   number={2},
   pages={427--442},
   issn={0025-5793},
   review={\MR{3081779}},
   doi={10.1112/S002557931200109X},
}

\bib{BGGK}{article}{
   author={Bober, Jonathan W.},
   author={Goldmakher, Leo},
   author={Granville, Andrew},
   author={Koukoulopoulos, Dimitris},
   title={The frequency and the structure of large character sums},
   journal={J. Eur. Math. Soc. (JEMS)},
   volume={20},
   date={2018},
   number={7},
   pages={1759--1818},
   issn={1435-9855},
   review={\MR{3807313}},
   doi={10.4171/JEMS/799},
}

\bib{BB}{article}{
   author={Bombieri, Enrico},
   author={Bourgain, Jean},
   title={On Kahane's ultraflat polynomials},
   journal={J. Eur. Math. Soc. (JEMS)},
   volume={11},
   date={2009},
   number={3},
   pages={627--703},
   issn={1435-9855},
   review={\MR{2505444}},
   doi={10.4171/jems/163},
}

\bib{B-book}{book}{
   author={Borwein, Peter},
   title={Computational excursions in analysis and number theory},
   series={CMS Books in Mathematics/Ouvrages de Math\'ematiques de la SMC},
   volume={10},
   publisher={Springer-Verlag, New York},
   date={2002},
   pages={x+220},
   isbn={0-387-95444-9},
   review={\MR{1912495}},
   doi={10.1007/978-0-387-21652-2},
}
\bib{CE1}{article}{
   author={Choi, Stephen},
   author={Erd\'elyi, Tam\'as},
   title={Average Mahler's measure and $L_p$ norms of Littlewood
   polynomials},
   journal={Proc. Amer. Math. Soc. Ser. B},
   volume={1},
   date={2014},
   pages={105--120},
   review={\MR{3272724}},
   doi={10.1090/S2330-1511-2014-00013-4},
}

\bib{CE}{article}{
   author={Choi, Stephen},
   author={Erd\'elyi, Tam\'as},
   title={Sums of monomials with large Mahler measure},
   journal={J. Approx. Theory},
   volume={197},
   date={2015},
   pages={49--61},
   issn={0021-9045},
   review={\MR{3351539}},
   doi={10.1016/j.jat.2014.01.003},
}

\bib{CGPS-fekete}{article}{
   author={Conrey, J. Brian},
   author={Granville, Andrew},
   author={Poonen, Bjorn},
   author={Soundararajan, Kannan},
   title={Zeros of Fekete polynomials},
   language={English, with English and French summaries},
   journal={Ann. Inst. Fourier (Grenoble)},
   volume={50},
   date={2000},
   number={3},
   pages={865--889},
   issn={0373-0956},
   review={\MR{1779897}},
   doi={10.5802/aif.1776},
}

\bib{dell-milicevic}{misc}{
  author={Dell, Justine},
  author={Mili\'cevi\'c, Djordje},
  title={The shape of quadratic Gauss paths},
  date={2025},
%  eprint={2508.21707},
%  archivePrefix={arXiv},
%  primaryClass={math.NT},
  note={arXiv:2508.21707}
}

\bib{Lower-Mah}{article}{
   author={Erd\'elyi, Tam\'as},
   title={Improved lower bound for the Mahler measure of the Fekete
   polynomials},
   journal={Constr. Approx.},
   volume={48},
   date={2018},
   number={2},
   pages={283--299},
   issn={0176-4276},
   review={\MR{3848040}},
   doi={10.1007/s00365-017-9398-y},
}

\bib{Rudin-Shapiro}{article}{
   author={Erd\'elyi, Tam\'as},
   title={The asymptotic value of the Mahler measure of the Rudin-Shapiro
   polynomials},
   journal={J. Anal. Math.},
   volume={142},
   date={2020},
   number={2},
   pages={521--537},
   issn={0021-7670},
   review={\MR{4205789}},
   doi={10.1007/s11854-020-0142-3},
}

\bib{LE}{article}{
   author={Erd\'{e}lyi, Tam\'as},
   author={Lubinsky, Doron. S.},
   title={Large sieve inequalities via subharmonic methods and the Mahler
   measure of the Fekete polynomials},
   journal={Canad. J. Math.},
   volume={59},
   date={2007},
   number={4},
   pages={730--741},
   issn={0008-414X},
   review={\MR{2338232}},
   doi={10.4153/CJM-2007-032-x},
}

\bib{EMOT1}{book}{
   author={Erd\'elyi, Arthur},
   author={Magnus, Wilhelm},
   author={Oberhettinger, Fritz},
   author={Tricomi, Francesco G.},
   title={Higher transcendental functions. Vol. I},
   note={Based on notes left by Harry Bateman;
   With a preface by Mina Rees;
   With a foreword by E. C. Watson;
   Reprint of the 1953 original},
   publisher={Robert E. Krieger Publishing Co., Inc., Melbourne, FL},
   date={1981},
   pages={xiii+302},
   isbn={0-89874-069-X},
   review={\MR{0698779}},
}
\bib{Field}{article}{
   author={Fielding, G. T.},
   title={The expected value of the integral around the unit circle of a
   certain class of polynomials},
   journal={Bull. London Math. Soc.},
   volume={2},
   date={1970},
   pages={301--306},
   issn={0024-6093},
   review={\MR{0280689}},
   doi={10.1112/blms/2.3.301},
}

\bib{GS-large-character-sums}{article}{
   author={Granville, Andrew},
   author={Soundararajan, K.},
   title={Large character sums: pretentious characters and the
   P\'olya-Vinogradov theorem},
   journal={J. Amer. Math. Soc.},
   volume={20},
   date={2007},
   number={2},
   pages={357--384},
   issn={0894-0347},
   review={\MR{2276774}},
   doi={10.1090/S0894-0347-06-00536-4},
}

\bib{GuentherSchmidt2017}{article}{
  author = {G{\"u}nther, Christian},
  author = {Schmidt, Kai-Uwe},
  title = {$L^q$ norms of Fekete and related polynomials},
  journal = {Canadian Journal of Mathematics},
  volume = {69},
  number = {4},
  pages = {807--825},
  year = {2017},
}

\bib{harper}{article}{
    author={Harper, Adam},
    title={The typical size of character and zeta sums is $o(\sqrt{x})$},
    note={arXiv:2301.04390}
}

\bib{hussain-character-paths}{article}{
   author={Hussain, Ayesha},
   title={The limiting distribution of character sums},
   journal={Int. Math. Res. Not. IMRN},
   date={2022},
   number={20},
   pages={16292--16326},
   issn={1073-7928},
   review={\MR{4498175}},
   doi={10.1093/imrn/rnab194},
}

\bib{lamzouri-hussain}{article}{
   author={Hussain, Ayesha},
   author={Lamzouri, Youness},
   title={The limiting distribution of Legendre paths},
   language={English, with English and French summaries},
   journal={J. \'Ec. polytech. Math.},
   volume={11},
   date={2024},
   pages={589--611},
   issn={2429-7100},
   review={\MR{4767013}},
   doi={10.5802/jep.260},
}

\bib{IwKo}{book}{
   author={Iwaniec, Henryk},
   author={Kowalski, Emmanuel},
   title={Analytic number theory},
   series={American Mathematical Society Colloquium Publications},
   volume={53},
   publisher={American Mathematical Society, Providence, RI},
   date={2004},
   pages={xii+615},
   isbn={0-8218-3633-1},
   review={\MR{2061214}},
   doi={10.1090/coll/053},
}

\bib{jedwab}{article}{
   author={Jedwab, Jonathan},
   author={Katz, Daniel J.},
   author={Schmidt, Kai-Uwe},
   title={Littlewood polynomials with small $L^4$ norm},
   journal={Adv. Math.},
   volume={241},
   date={2013},
   pages={127--136},
   issn={0001-8708},
   review={\MR{3053707}},
   doi={10.1016/j.aim.2013.03.015},
}

\bib{kahane}{article}{
   author={Kahane, Jean-Pierre},
   title={Sur les polyn\^omes \`a{} coefficients unimodulaires},
   language={French},
   journal={Bull. London Math. Soc.},
   volume={12},
   date={1980},
   number={5},
   pages={321--342},
   issn={0024-6093},
   review={\MR{0587702}},
   doi={10.1112/blms/12.5.321},
}

\bib{KLM-fekete}{article}{
    author={Klurman, Oleksiy},
    author={Lamzouri, Youness},
    author={Munsch, Marc},
    title={$L_q$ norms and Mahler measure of Fekete polynomials},
    note={arXiv:2306.07156}
}

\bib{kookoobook}{book}{
   author={Koukoulopoulos, Dimitris},
   title={The distribution of prime numbers},
   series={Graduate Studies in Mathematics},
   volume={203},
   publisher={American Mathematical Society, Providence, RI},
   date={2019},
   pages={xii + 356},
   isbn={978-1-4704-4754-0},
   isbn={978-1-4704-6285-7},
   review={\MR{3971232}},
   doi={10.1090/gsm/203},
}

\bib{kowalski-sawin}{article}{
   author={Kowalski, Emmanuel},
   author={Sawin, William F.},
   title={Kloosterman paths and the shape of exponential sums},
   journal={Compos. Math.},
   volume={152},
   date={2016},
   number={7},
   pages={1489--1516},
   issn={0010-437X},
   review={\MR{3530449}},
   doi={10.1112/S0010437X16007351},
}

%\bib{krylov-random-processes}{book}{
%   author={Krylov, N. V.},
%   title={Introduction to the theory of random processes},
%   series={Graduate Studies in Mathematics},
%   volume={43},
%   publisher={American Mathematical Society, Providence, RI},
%   date={2002},
%   pages={xii+230},
%   isbn={0-8218-2985-8},
%   review={\MR{1885884}},
%   doi={10.1090/gsm/043},
%}

\bib{lamzouri-quadraticmax}{article}{
   author={Lamzouri, Youness},
   title={The distribution of large quadratic character sums and
   applications},
   journal={Algebra Number Theory},
   volume={18},
   date={2024},
   number={11},
   pages={2091--2131},
   issn={1937-0652},
   review={\MR{4812231}},
   doi={10.2140/ant.2024.18.2091},
}

\bib{Lehmer}{article}{
   author={Lehmer, D. H.},
   title={Factorization of certain cyclotomic functions},
   journal={Ann. of Math. (2)},
   volume={34},
   date={1933},
   number={3},
   pages={461--479},
   issn={0003-486X},
   review={\MR{1503118}},
   doi={10.2307/1968172},
}

\bib{nath-lamzouri}{article}{
    author = {Lamzouri, Youness},
    author = {Nath, Kunjakanan},
    title = {The distribution of the maximum of cubic character sums},
    date = {2024},
    note = {arXiv:2410.22305}
}

\bib{mahler}{article}{
   author={Mahler, Kurt},
   title={On two extremum properties of polynomials},
   journal={Illinois J. Math.},
   volume={7},
   date={1963},
   pages={681--701},
   issn={0019-2082},
   review={\MR{0156950}},
}

\bib{montgomery-fekete}{article}{
   author={Montgomery, Hugh L.},
   title={An exponential polynomial formed with the Legendre symbol},
   journal={Acta Arith.},
   volume={37},
   date={1980},
   pages={375--380},
   issn={0065-1036},
   review={\MR{598890}},
   doi={10.4064/aa-37-1-375-380},
}

\bib{MV-meanvalues}{article}{
   author={Montgomery, Hugh. L.},
   author={Vaughan, Robert. C.},
   title={Mean values of character sums},
   journal={Canadian J. Math.},
   volume={31},
   date={1979},
   number={3},
   pages={476--487},
   issn={0008-414X},
   review={\MR{0536358}},
   doi={10.4153/CJM-1979-053-2},
}

\bib{MV}{book}{
   author={Montgomery, Hugh L.},
   author={Vaughan, Robert C.},
   title={Multiplicative number theory. I. Classical theory},
   series={Cambridge Studies in Advanced Mathematics},
   volume={97},
   publisher={Cambridge University Press, Cambridge},
   date={2007},
   pages={xviii+552},
   isbn={978-0-521-84903-6},
   isbn={0-521-84903-9},
   review={\MR{2378655 (2009b:11001)}}
}

\bib{mossinghoff-mahler}{article}{
    author = {Mossinghoff, Michael J.},
    title = {Mahler's problem and Turyn polynomials},
    date = {2024},
    note = {arXiv:2405.08281}
}

\bib{NIST:DLMF}{book}{
   title={NIST Digital Library of Mathematical Functions},
   editor={Olver, Frank W. J.},
   editor={Olde Daalhuis, Adri B.},
   editor={Lozier, Daniel W.},
   editor={Schneider, Robert F.},
   editor={Boisvert, Ronald F.},
   editor={Clark, Charles W.},
   editor={Miller, Barnett R.},
   editor={Saunders, Barry V.},
   editor={Cohl, Howard S.},
   editor={McClain, Max},
   publisher={U.S. Department of Commerce, National Institute of Standards and Technology},
   address={Washington, DC},
   date={2023},
   note={Release 1.2.1 of 2023-12-15},
   url={https://dlmf.nist.gov/},
}

\bib{prokhorov}{article}{
   author={Prokhorov, Yu.\ V.},
   title={Convergence of random processes and limit theorems in probability
   theory},
   language={Russian, with English summary},
   journal={Teor. Veroyatnost. i Primenen.},
   volume={1},
   date={1956},
   pages={177--238},
   issn={0040-361X},
   review={\MR{0084896}},
   note={English translation in Theory of Probability \& Its Applications, Volume 1, pages 156--214, doi 10.1137/1101016}
}

\bib{Rod}{article}{
   author={Rodgers, Brad},
   title={On the distribution of Rudin-Shapiro polynomials and lacunary
   walks on $SU(2)$},
   journal={Adv. Math.},
   volume={320},
   date={2017},
   pages={993--1008},
   issn={0001-8708},
   review={\MR{3709127}},
   doi={10.1016/j.aim.2017.09.022},
}

\bib{moment-problem}{book}{
   author={Schm\"udgen, Konrad},
   title={The moment problem},
   series={Graduate Texts in Mathematics},
   volume={277},
   publisher={Springer, Cham},
   date={2017},
   pages={xii+535},
   isbn={978-3-319-64545-2},
   isbn={978-3-319-64546-9},
   review={\MR{3729411}},
}
\bib{wang-xu}{article}{
    author = {Wang, Victor},
    author = {Xu, Max},
    title = {Average sizes of mixed character sums},
    note={arXiv:2411.14181}
}

%\bib{duke-friedlander-iwaniec}{article}{
%    author={Duke, William},
%    author={Friedlander, John B.},
%    author={Iwaniec, Henryk},
%    title={Equidistribution of roots of a quadratic congruence to prime moduli},
%    journal={Ann. of Math.},
%    volume={141},
%    number={2},
%    date={1995},
%    pages={423--441},
%    issn={0003-486X},
%    review={\MR{1324141}},
%    doi={10.2307/2118527},
%}

% \bib{turyn1}{article}{
%    author={Turyn, R.},
%    title={Sequences with small correlation},
%    conference={
%       title={Error Correcting Codes},
%       address={Proc. Sympos. Math. Res. Center, Madison, Wis.},
%       date={1968},
%    },
%    book={
%       publisher={Wiley, New York-London-Sydney},
%    },
%    date={1968},
%    pages={195--228},
%    review={\MR{0242566}},
% }

% \bib{turyn2}{article}{
%    author={Turyn, R.},
%    author={Storer, J.},
%    title={On binary sequences},
%    journal={Proc. Amer. Math. Soc.},
%    volume={12},
%    date={1961},
%    pages={394--399},
%    issn={0002-9939},
%    review={\MR{0125026}},
%    doi={10.2307/2034204},
% }

% \bib{turyn3}{article}{
%    author={Turyn, Richard J.},
%    title={Character sums and difference sets},
%    journal={Pacific J. Math.},
%    volume={15},
%    date={1965},
%    pages={319--346},
%    issn={0030-8730},
%    review={\MR{0179098}},
% }

\end{biblist}
\end{bibdiv}

\end{document}